\newcommand\footnoteref[1]{\protected@xdef\@thefnmark{\ref{#1}}\@footnotemark}
\definecolor{grey}{rgb}{0.95,0.95,0.95}
\definecolor{green}{rgb}{0.2,0.6,0.4}
\newcommand{\restr}{\upharpoonright}
\newcommand{\parop}{\mathtt{parts}}
\newcommand{\iniop}{\mathtt{init}}
\newcommand{\boxop}{\mathtt{box}}
\newcommand{\unboxop}{\mathtt{unbox}}
\newcommand{\satop}{\mathtt{sat}}
\newcommand{\progop}{\mathtt{prog}}
\newcommand{\queryop}{\mathop{Query}}
\newcommand{\ansop}{\mathop{Ans}}
\newcommand{\no}{\mathtt{No}}
\newcommand{\yes}{\mathtt{Yes}}
\newcommand{\Ub}{\mathbb{U}}
\newcommand{\Sb}{\mathbb{S}}
\newcommand{\Psf}{\mathsf{P}}
\newcommand{\Qsf}{\mathsf{Q}}
\newcommand{\imp}{\rightarrow}
\newcommand{\Mb}{\mathbb{M}}
\newcommand{\Nb}{\mathbb{N}}
\newcommand{\Pb}{\mathbb{P}}
\newcommand{\dbf}{\mathbf{d}}
\newcommand{\Acal}{\mathcal{A}}
\newcommand{\Ccal}{\mathcal{C}}
\newcommand{\Dcal}{\mathcal{D}}
\newcommand{\Ical}{\mathcal{I}}
\newcommand{\Mcal}{\mathcal{M}}
\newcommand{\Pcal}{\mathcal{P}}
\newcommand{\Tcal}{\mathcal{T}}
\newcommand{\uh}{{\upharpoonright}}
\renewcommand{\setminus}{\smallsetminus}
\newcommand{\tuple}[1]{\left\langle #1 \right\rangle}
\newcommand{\cond}[1]{\left\{\begin{array}{ll} #1 \end{array}\right.}
\newcommand{\s}[1]{\ensuremath{\sf{#1}}}
\DeclareMathOperator{\rca}{\s{RCA}_0}
\DeclareMathOperator{\aca}{\s{ACA}}
\DeclareMathOperator{\wkl}{\s{WKL}}
\DeclareMathOperator{\bst}{\s{B}\Sigma^0_2}
\DeclareMathOperator{\ist}{\s{I}\Sigma^0_2}
\DeclareMathOperator{\rt}{\s{RT}}
\DeclareMathOperator{\srt}{\s{SRT}}
\DeclareMathOperator{\ads}{\s{ADS}}
\DeclareMathOperator{\sads}{\s{SADS}}
\DeclareMathOperator{\cac}{\s{CAC}}
\DeclareMathOperator{\coh}{\s{COH}}
\DeclareMathOperator{\fip}{\s{FIP}}
\DeclareMathOperator{\sts}{\s{STS}}
\DeclareMathOperator{\emo}{\s{EM}}
\DeclareMathOperator{\semo}{\s{SEM}}
\DeclareMathOperator{\opt}{\s{OPT}}
\DeclareMathOperator{\amt}{\s{AMT}}
\definecolor{lightblue}{HTML}{e6e6e6}
\definecolor{lightred}{HTML}{eca6a6}
\definecolor{lightgreen}{RGB}{164,244,140}
\newtheoremstyle{custom}
  {10pt}
  {10pt}
  {\normalfont}
  {}
  {\bfseries}
  {}
  { }
  {}
\theoremstyle{custom}
\newtheorem{theorem}{Theorem}[section]
\newtheorem{lemma}[theorem]{Lemma}
\newtheorem{definition}[theorem]{Definition}
\newtheorem{question}[theorem]{Question}
\begin{document}

\title[Dominating the Erd\H{o}s-Moser theorem]{Dominating the Erd\H{o}s-Moser theorem\\ in reverse mathematics}
\author{
  Ludovic Patey
}

\begin{abstract}
The Erd\H{o}s-Moser theorem ($\emo$) states that every infinite tournament has an infinite
transitive subtournament. This principle plays an important role in the understanding of
the computational strength of Ramsey's theorem for pairs ($\rt^2_2$) by providing an alternate
proof of~$\rt^2_2$ in terms of~$\emo$ and the ascending descending sequence principle ($\ads$).
In this paper, we study the computational weakness of $\emo$ and construct
a standard model ($\omega$-model) of simultaneously~$\emo$, weak K\"onig's lemma and the cohesiveness
principle, which is not a model of the atomic model theorem. 
This separation answers a question of Hirschfeldt, Shore and Slaman,
and shows that the weakness of the Erd\"os-Moser theorem goes beyond the separation
of~$\emo$ from~$\ads$ proven by Lerman, Solomon and Towsner.
\end{abstract}

\maketitle

\section{Introduction}

Reverse mathematics is a mathematical program whose goal is
to classify theorems in terms of their provability strength.
It uses the framework of subsystems of second-order arithmetic,
with the base theory $\rca$, standing for Recursive Comprehension Axiom.
$\rca$ is composed of the basic first-order Peano axioms,
together with $\Delta^0_1$-comprehension and $\Sigma^0_1$-induction schemes.
$\rca$ is usually thought of as capturing \emph{computational mathematics}.
This program led to two important observations:
First, most ``ordinary'' (i.e.\ non set-theoreric) theorems require only very weak set existence axioms.
Second, many of those theorems are actually \emph{equivalent}
to one of five main subsystems over $\rca$, known as the ``Big Five''~\cite{Montalban2011Open}.

However, Ramsey theory is known to provide a large class of theorems escaping this phenomenon.
Indeed, consequences of Ramsey's theorem for pairs ($\rt^2_2$) usually belong to their own subsystem.
Therefore, they received a lot of attention from the reverse mathematics 
community~\cite{Cholak2001strength,Hirschfeldt2008strength,Hirschfeldt2007Combinatorial,Seetapun1995strength}.
This article focuses on Ramseyan principles below the arithmetic comprehension axiom~($\aca$). 
See Soare~\cite{Soare2016Turing} for a general introduction to computability theory,
and Hirschfeldt~\cite{Hirschfeldt2015Slicing} for a good introduction
to the reverse mathematics below~$\aca$.

\subsection{Cohesiveness}

Cohesiveness is a statement playing a central role in the analysis of Ramsey's theorem for pairs~\cite{Cholak2001strength}.
It can be seen as a sequential version of Ramsey's theorem for singletons and admits
characterizations in terms of degrees whose jump computes a path through a $\Pi^{0,\emptyset'}_1$ class~\cite{Jockusch1993cohesive}.
The decomposition of $\rt^2_2$ in terms of $\coh$ and stable Ramsey's theorem for pairs ($\srt^2_2$)
has been reused in the analysis of many consequences of Ramsey's theorem~\cite{Hirschfeldt2007Combinatorial}.
The link between cohesiveness and~$\srt^2_2$ is still an active research subject
\cite{Chong2014metamathematics,DzhafarovStrong,Hirschfeldt2016notions,Patey2016weakness}.

\begin{definition}[Cohesiveness]
An infinite set $C$ is $\vec{R}$-cohesive for a sequence of sets $\vec{R} = R_0, R_1, \dots$
if for each $i \in \omega$, $C \subseteq^{*} R_i$ or $C \subseteq^{*} \overline{R_i}$.
A set $C$ is \emph{p-cohesive} if it is $\vec{R}$-cohesive where
$\vec{R}$ is an enumeration of all primitive recursive sets.
$\coh$ is the statement ``Every uniform sequence of sets $\vec{R}$
has an $\vec{R}$-cohesive set.''
\end{definition}

Jockusch and Stephan~\cite{Jockusch1993cohesive} studied the degrees of unsolvability of cohesiveness
and proved that~$\coh$ admits a universal instance whose solutions
are the p-cohesive sets. They characterized their degrees as those whose
jump is PA relative to~$\emptyset'$.
The author extended this analysis to every computable instance of~$\coh$ and studied their degrees
of unsolvability~\cite{Patey2016weakness}.
Cholak, Jockush and Slaman~\cite{Cholak2001strength} proved that $\rt^2_2$ is computably equivalent to $\srt^2_2+\coh$.
Mileti~\cite{Mileti2004Partition} and Jockusch and Lempp [unpublished]
formalized this equivalence over $\rca$.
Hirschfeldt, Jockusch, Kjos-Hanssen, Lempp and Slaman~\cite{Hirschfeldt2008strength} proved that~$\coh$ contains a model
with no diagonally non-computable function, thus $\coh$ does not imply~$\srt^2_2$ over~$\rca$.
Cooper~\cite{Cooper1973Minimal} proved that every degree above~$\mathbf{0'}$ is the jump of a minimal degree.
Therefore there exists a p-cohesive set of minimal degree.

\subsection{The Erd\H{o}s-Moser theorem}

The Erd\H{o}s-Moser theorem is a principle coming from graph theory.
It provides together with the ascending descending principle~($\ads$) an alternative proof of
Ramsey's theorem for pairs ($\rt^2_2$). Indeed, every coloring~$f : [\omega]^2 \to 2$
can be seen as a tournament~$R$ such that~$R(x,y)$ holds if~$x < y$ and~$f(x,y) = 1$, or~$x > y$ and~$f(y, x) = 0$.
Every infinite transitive subtournament induces a linear order whose infinite ascending or descending
sequences are homogeneous for~$f$.

\begin{definition}[Erd\H{o}s-Moser theorem]
A tournament $T$ on a domain $D \subseteq \omega$ is an irreflexive binary relation on~$D$ such that for all $x,y \in D$ with $x \not= y$, exactly one of $T(x,y)$ or $T(y,x)$ holds. A tournament $T$ is \emph{transitive} if the corresponding relation~$T$ is transitive in the usual sense. A tournament $T$ is \emph{stable} if $(\forall x \in D)[(\forall^{\infty} s) T(x,s) \vee (\forall^{\infty} s) T(s, x)]$.
$\emo$ is the statement ``Every infinite tournament $T$ has an infinite transitive subtournament.''
$\semo$ is the restriction of $\emo$ to stable tournaments.
\end{definition}

Bovykin and Weiermann~\cite{Bovykin2005strength} introduced the Erd\H{o}s-Moser theorem in reverse mathematics
and proved that $\emo$ together with the chain-antichain principle ($\cac$) is equivalent to $\rt^2_2$ over $\rca$. 
This was refined into an equivalence between $\emo+\ads$ and $\rt^2_2$ by Montalb\'an (see~\cite{Bovykin2005strength}),
and the equivalence still holds between the stable versions of the statements.
Lerman, Solomon and Towsner~\cite{Lerman2013Separating} proved that~$\emo$ 
is strictly weaker than~$\rt^2_2$ by constructing an $\omega$-model
of~$\emo$ which is not a model of the stable ascending descending sequence~($\sads$). $\sads$
is the restriction of~$\ads$ to linear orders of order type~$\omega+\omega^{*}$~\cite{Hirschfeldt2007Combinatorial}. 
The author noticed in~\cite{Patey2013note} that their construction can be adapted to obtain a separation of~$\emo$ from the stable thin set theorem for pairs~($\sts(2)$).
Wang strengthened this separation by constructing in~\cite{Wang2014Definability} a standard model of many theorems,
including~$\emo$, $\coh$ and weak K\"onig's lemma ($\wkl$) which is neither a model of~$\sts(2)$
nor a model of~$\sads$. The author later refined in~\cite{Patey2015Iterative,Patey2016weakness} the forcing technique of Lerman, Solomon and Towsner and showed that it is strong enough to obtain the same separations as Wang.

On the lower bounds side, Lerman, Solomon and Towsner~\cite{Lerman2013Separating}\ showed that~$\emo$ 
implies the omitting partial types principle ($\opt$)
over~$\rca + \bst$ and Kreuzer proved in~\cite{Kreuzer2012Primitive} that $\semo$ implies
$\bst$ over $\rca$. The statement $\opt$ can be thought of as a stating 
for every set~$X$ the existence of a set hyperimmune relative to~$X$.
Finally, the author proved in~\cite{Patey2015Somewhere} that $\rca \vdash \emo \imp [\sts(2) \vee \coh]$.
In particular, every model of~$\emo$ which is not a model of~$\sts(2)$ is also a model of $\coh$.
This fact will be reused in this paper since $\sts(2)$ implies the atomic model theorem over~$\rca$~\cite{Patey2015Somewhere}.

\subsection{Domination and the atomic model theorem}\label{subsect:dominating-amt}

The atomic model theorem is a statement coming from model theory. It has been
introduced by Hirschfeldt, Shore and Slaman~\cite{Hirschfeldt2009atomic} in the settings of reverse mathematics.

\begin{definition}[Atomic model theorem]
A formula $\varphi(x_1, \dots, x_n)$ of $T$ is an \emph{atom} of a theory $T$ if for each formula $\psi(x_1, \dots, x_n)$
  we have $T \vdash \varphi \imp \psi$ or $T \vdash \varphi \imp \neg \psi$ but not both.
  A theory $T$ is \emph{atomic} if, for every formula $\psi(x_1, \dots, x_n)$ consistent with $T$,
  there is an atom $\varphi(x_1, \dots, x_n)$ of $T$ extending it, i.e., one such that $T \vdash \varphi \imp \psi$.
  A model $\Acal$ of $T$ is \emph{atomic} if every $n$-tuple from $\Acal$ satisfies an atom of $T$. 
$\amt$ is the statement ``Every complete atomic theory has an atomic model''.
\end{definition}

This strength of the atomic model theorem received a lot of attention from the reverse mathematics community 
and was subject to many refinements.
On the upper bound side, Hirschfeldt, Shore and Slaman~\cite{Hirschfeldt2009atomic} 
proved that~$\amt$ is a consequence of~$\sads$ over~$\rca$. 
The author~\cite{Patey2015Somewhere} proved that the stable thin set theorem for pairs ($\sts(2)$) implies~$\amt$ over~$\rca$.

On the lower bound side, Hirschfeldt, Shore and Slaman~\cite{Hirschfeldt2009atomic} proved that~$\amt$
implies the omitting partial type theorem ($\opt$) over~$\rca$. 
Hirschfeldt and Greenberg, and independently Day, Dzhafarov and Miller, 
strengthened this result by proving that $\amt$ implies the finite intersection property ($\fip$) over~$\rca$ (see~\cite{Hirschfeldt2015Slicing}).
The principle $\fip$ was first introduced by Dzhafarov and Mummert~\cite{Dzhafarov2013strength}. 
Later, Downey, Diamondstone, Greenberg and Turetsky~\cite{Downey2012Finite} and 
Cholak, Downey and Igusa~\cite{Cholak2015Any} proved that~$\fip$ is equivalent to the principle asserting, for every set $X$,
the existence of a 1-generic relative to~$X$. In particular, every model of $\amt$ contains 1-generic reals.

The computable analysis of the atomic model theorem revealed the genericity flavor of the statement.
More precisely, the atomic model theorem admits a pure computability-theoretic characterization
in terms of hyperimmunity relative to a fixed $\Delta^0_2$ function. 

\begin{definition}[Escape property]
For every $\Delta^0_2$ function $f$, there exists a function~$g$ such that $f(x) < g(x)$ for infinitely many $x$.
\end{definition}

The escape property is a statement in between hyperimmunity relative to~$\emptyset'$ and hyperimmunity.
The atomic model theorem is computably equivalent to the escape property, that is, for every complete atomic theory $T$,
there is a $\Delta^{0,T}_2$ function $f$ such that for every function $g$ satisfying the escape property for~$f$,
$T \oplus g$ computes an atomic model of~$T$. Conversely, for every $\Delta^0_2$ approximation $\tilde{f}$
of a function $f$, there is a $\tilde{f}$-computable complete atomic theory such that for every atomic model $\Mcal$,
$\tilde{f} \oplus \Mcal$ computes a function satisfying the escape property for~$f$.
In particular, the $\omega$-models satisfying $\amt$
are exactly the ones satisfying the escape property. However the formalization of this equivalence requires
more than the $\Sigma^0_1$ induction scheme.
It was proven to hold over~$\rca + \ist$ but not~$\rca + \bst$~\cite{Hirschfeldt2009atomic,Conidis2008Classifying}, 
where~$\ist$ and~$\bst$ are respectively the~$\Sigma^0_2$ induction scheme and
the $\Sigma^0_2$ bounding scheme.

Hirschfeldt, Shore and Slaman~\cite{Hirschfeldt2009atomic} asked the following question.

\begin{question}
Does the cohesiveness principle imply the atomic model theorem over~$\rca$?
\end{question}

Note that $\amt$ is not computably reducible to~$\coh$, since there exists a cohesive set of minimal degree~\cite{Cooper1973Minimal},
and a computable atomic theory whose computable atomic models bound 1-generic reals~\cite{Hirschfeldt2015Slicing}, 
but no minimal degree bounds a 1-generic real~\cite{Yu2006Lowness}.

In this paper, we answer this question negatively.
We shall take advantage of the characterization of $\amt$ by the escape property to create an $\omega$-model $\Mcal$ of $\emo$, $\wkl$
and $\coh$ simultaneously, together with a $\Delta^0_2$ function $f$ dominating every function in~$\Mcal$.
Therefore, any $\Delta^0_2$ approximation $\tilde{f}$ of the function $f$ 
is a computable instance of the escape property belonging to $\Mcal$, but with no solution in $\Mcal$.
The function $f$ witnesses in particular that~$\Mcal \not \models \amt$.
Our main theorem is the following.

\begin{theorem}[Main theorem]\label{thm:main-theorem}
$\coh \wedge \emo \wedge \wkl$ does not imply $\amt$ over~$\rca$.
\end{theorem}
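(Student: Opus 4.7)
The plan is to construct, in stages, an $\omega$-model $\Mcal \subseteq 2^\omega$ closed under Turing reducibility and join, together with a single $\Delta^0_2$ function $f$, having two properties: (a) $\Mcal \models \coh \wedge \emo \wedge \wkl$, and (b) $f$ dominates every total function $g \in \Mcal$. Granting (a) and (b), the theorem follows at once. Fix any computable $\Delta^0_2$ approximation $\tilde f$ of $f$; then $\tilde f \in \Mcal$ is a valid instance of the escape property, and by the characterization of $\amt$ recalled in Section~\ref{subsect:dominating-amt}, an atomic model inside $\Mcal$ of the computable atomic theory coded by $\tilde f$ would produce some $g \in \Mcal$ with $f(x) < g(x)$ for infinitely many $x$, contradicting~(b).

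The model will be built by a standard enumerate-and-close procedure, interleaving solutions to all previously enumerated instances of $\coh$, $\emo$ and $\wkl$. The invariant maintained through the construction is that after stage $s$, the finite list of generators $X_0, \dots, X_s$ satisfies the stronger statement that $f$ dominates every $(X_0 \oplus \cdots \oplus X_s)$-computable total function. The closure steps for $\wkl$ and $\coh$ are handled by suitable preservation-of-domination basis theorems: for $\wkl$, one adapts a hyperimmune-free-style argument to find a path in any given infinite binary tree whose computational power is too weak to produce functions escaping~$f$; for $\coh$, one uses the Jockusch--Stephan~\cite{Jockusch1993cohesive} analysis of cohesive degrees together with the fact that $f$ is $\Delta^0_2$ to produce a cohesive set of sufficiently restricted strength.

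The main obstacle, and the technical heart of the paper, is to prove the corresponding preservation theorem for the Erd\H{o}s-Moser theorem: for every $\Delta^0_2$ function $f$ and every infinite tournament $T$ such that $f$ dominates every $T$-computable total function, there exists an infinite transitive subtournament $H$ of $T$ such that $f$ still dominates every $(T \oplus H)$-computable total function. I would attack this by Mathias-style forcing with conditions $(F, R)$, where $F$ is a finite transitive subtournament of $T$ and $R$ is an infinite reservoir of vertices that extend $F$ coherently, refining the Lerman--Solomon--Towsner forcing~\cite{Lerman2013Separating} and its iterative variant developed in~\cite{Patey2015Iterative, Patey2016weakness}. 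To diagonalize against a functional $\Phi^{T\oplus H}$ threatening to escape $f$, the key combinatorial step is to show that at any condition one may either extend to force $\Phi^{T\oplus H}$ to be partial, or, using $\emptyset'$ as an oracle to evaluate $f$, extend the reservoir and the finite part so as to pin down the value $\Phi^{T\oplus H}(x)$ below $f(x)$ on any chosen argument $x$. The hard part will be the combinatorics of the second alternative, since the structure of transitive subtournaments is more delicate than that of homogeneous sets for a two-coloring; the argument is expected to exploit a tripartition of the reservoir into vertices that are ``minimal'', ``maximal'', or neither in the tournament restricted to a suitable subset, analogous to the one used in earlier $\emo$ separations. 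Once this preservation theorem is established, combining it with the analogues for $\wkl$ and $\coh$ along the enumerate-and-close construction produces the required model $\Mcal$ and completes the proof.
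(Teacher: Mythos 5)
Your overall strategy --- build an $\omega$-model together with a $\Delta^0_2$ function $f$ dominating every function in the model, then invoke the escape-property characterization of $\amt$ --- is exactly the paper's strategy. But the way you organize the construction is genuinely different from the paper, and that difference hides a real gap in the middle step.

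Your plan is an enumerate-and-close iteration governed by a \emph{preservation theorem}: given a fixed $\Delta^0_2$ function $f$ dominating every $T$-computable total function, find a transitive $H \subseteq T$ such that $f$ still dominates every $(T \oplus H)$-computable total function. (Similar preservation lemmas are implicitly invoked for $\coh$.) For $\wkl$ this works, because the hyperimmune-free basis theorem is exactly such a preservation theorem. But for $\coh$ and $\emo$ you have no such tool, and the reason your sketch of the preservation lemma fails is concrete. The Mathias/EM forcing decides ``does some finite extension $E$ of the condition make $\Phi^{F \cup E}_e(x)$ halt?'' --- this is a $\Sigma^0_1$ question decided with a $\emptyset'$ (or $T'$) oracle, and the resulting sequence of halting values (the ``trial values'') is therefore a $\Delta^{0,T}_2$ function of $x$. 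A fixed $f$ that merely dominates $T$-computable functions has no reason to dominate this $\Delta^{0,T}_2$ function; indeed any $\Delta^0_2$ $f$ fails to dominate $f + 1$. Your phrase ``extend the reservoir and the finite part so as to pin down the value $\Phi^{T \oplus H}(x)$ below $f(x)$'' is precisely where the argument breaks: at a given condition, every extension that halts may yield a value $\geq f(x)$, and there is in general no way to retreat to a reservoir that avoids halting, so ``pinning down below $f(x)$'' is not something the forcing can deliver on demand. (Note also that solutions to $\coh$ are always of high degree, so they do compute dominant functions; which dominant function must be steered below the given $f$, and nothing in the Jockusch--Stephan analysis does that steering for you.)

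The paper side-steps all of this by reversing the quantifier order: it does not fix $f$ and then preserve domination, it \emph{defines} $f(x)$ at each stage to exceed the trial values actually produced by a $\emptyset'$-effective forcing. That makes domination automatic, but it forces the entire infinite iteration (all levels of the Turing ideal at once) to be computed by a single $\Delta^0_2$ process. This in turn creates the difficulty your step-by-step plan does not face: at stage $s$ the instance at iteration level $k>0$ depends on the solutions at lower levels, which are only known via finite approximations. Managing this is the whole point of the paper's ``domination framework'' --- the stage trees, the parallel parts of a condition (so that a $\Pi^{0,X}_2$ choice never has to be made but is instead deferred to all branches simultaneously), the $\Pi^{0,D}_1$ cover-class formulation of EM conditions, and the boxing/unboxing mechanism that pushes a $\Sigma^{0,G_0,\dots,G_k}_1$ query down to a genuine $\Sigma^0_1$ question answerable by $\emptyset'$. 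Your proposal offers the preservation form as a shortcut, but it is a strictly stronger claim than anything the paper proves, it is not known to hold, and the one place your sketch addresses the key difficulty is exactly where it asserts without justification that the value can be kept below $f(x)$. To recover the proof along the paper's lines you would need to abandon the fixed-$f$ formulation and build $f$ simultaneously with the (uniformly $\Delta^0_2$) forcing trees, which is the content of the domination framework in Sections 4--6.
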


The proof techniques used to prove the main theorem will be introduced
progressively by considering first computable non-reducibility, and then generalizing
the diagonalization to Turing ideals by using an effective iterative forcing.

\subsection{Definitions and notation}

\emph{String, sequence}.
Fix an integer $k \in \omega$.
A \emph{string} (over $k$) is an ordered tuple of integers $a_0, \dots, a_{n-1}$
(such that $a_i < k$ for every $i < n$). The empty string is written $\varepsilon$. A \emph{sequence}  (over $k$)
is an infinite listing of integers $a_0, a_1, \dots$ (such that $a_i < k$ for every $i \in \omega$).
Given $s \in \omega$,
$k^s$ is the set of strings of length $s$ over~$k$ and
$k^{<s}$ is the set of strings of length $<s$ over~$k$. Similarly,
$k^{<\omega}$ is the set of finite strings over~$k$
and $k^{\omega}$ is the set of sequences (i.e. infinite strings)
over~$k$. 
Given a string $\sigma \in k^{<\omega}$, we denote by $|\sigma|$ its length.
Given two strings $\sigma, \tau \in k^{<\omega}$, $\sigma$ is a \emph{prefix}
of $\tau$ (written $\sigma \preceq \tau$) if there exists a string $\rho \in k^{<\omega}$
such that $\sigma \rho = \tau$. Given a sequence $X$, we write $\sigma \prec X$ if
$\sigma = X \uh n$ for some $n \in \omega$, where $X \uh n$ denotes the restriction of $X$ to its first $n$ elements.
A \emph{binary string} is a \emph{string} over~$2$.
A \emph{real} is a sequence over~$2$.
We may identify a real with a set of integers by considering that the real is its characteristic function.

\emph{Tree, path}.
A tree $T \subseteq k^{<\omega}$ is a set downward-closed under the prefix relation.
A \emph{binary} tree is a tree $T \subseteq 2^{<\omega}$.
A sequence $P \in k^\omega$ is a \emph{path} though~$T$ if for every $\sigma \prec P$,
$\sigma \in T$. A string $\sigma \in k^{<\omega}$ is a \emph{stem} of a tree $T$
if every $\tau \in T$ is comparable with~$\sigma$.
Given a tree $T$ and a string $\sigma \in T$,
we denote by $T^{[\sigma]}$ the subtree $\{\tau \in T : \tau \preceq \sigma \vee \tau \succeq \sigma\}$.

\emph{Sets, partitions}.
Given two sets $A$ and $B$, we denote by $A < B$ the formula
$(\forall x \in A)(\forall y \in B)[x < y]$
and by $A \subseteq^{*} B$ the formula $(\forall^{\infty} x \in A)[x \in B]$,
meaning that $A$ is included in $B$ \emph{up to finitely many elements}.
Given a set~$X$ and some integer~$k$, a~\emph{$k$-cover of~$X$}
is a $k$-uple $A_0, \dots, A_{k-1}$ such that~$A_0 \cup \dots \cup A_{k-1} = X$.
We may simply say~\emph{$k$-cover} when the set~$X$ is unambiguous. 
A \emph{$k$-partition} is a $k$-cover whose sets are pairwise disjoint.
A \emph{Mathias condition} is a pair $(F, X)$
where $F$ is a finite set, $X$ is an infinite set
and $F < X$.
A condition $(F_1, X_1)$ \emph{extends } $(F, X)$ (written $(F_1, X_1) \leq (F, X)$)
if $F \subseteq F_1$, $X_1 \subseteq X$ and $F_1 \setminus F \subset X$.
A set $G$ \emph{satisfies} a Mathias condition $(F, X)$
if $F \subset G$ and $G \setminus F \subseteq X$.
We refer the reader to Chapter 2 in Hirschfeldt~\cite{Hirschfeldt2015Slicing} for a gentle introduction
to effective forcing.

\section{The weakness of cohesiveness under computable reducibility}

Before proving that~$\coh$ does not imply $\amt$ over~$\rca$,
we illustrate the key features of our construction by 
showing that $\amt$ does not reduce to~$\coh$ in one step.
This one-step reducibility is known as \emph{computable reducibility}~\cite{DzhafarovStrong,Hirschfeldt2016notions,Patey2016weakness}.
The general construction will consist of iterating this 
one-step diagonalization to construct a Turing ideal
whose functions are dominated by a single $\Delta^0_2$ function.

\begin{definition}[Computable reducibility]
A principle $\Psf$ is \emph{computably reducible} to another principle~$\Qsf$ (written $\Psf \leq_c \Qsf$)
if every~$\Psf$-instance~$I$ computes a~$\Qsf$-instance~$J$ such that for every solution~$X$ to~$J$,
$X \oplus I$ computes a solution to~$I$.
\end{definition}

The remainder of this section is devoted to the proof of the following theorem.

\begin{theorem}\label{thm:amt-comp-reduc-coh}
$\amt \not \leq_c \coh$
\end{theorem}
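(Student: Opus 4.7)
The plan is to use the characterization of $\amt$ by the escape property. Fix a $\Delta^0_2$ function $f$ dominating every total computable function, for example $f(n) = 1 + \max\{\Phi_e(n) : e \le n,\ \Phi_e(n)\halts\}$, which is $\emptyset'$-computable. Let $\tilde f$ be the canonical (computable) $\Delta^0_2$-approximation of $f$, and let $T_f$ be the computable complete atomic theory produced by the converse direction of the escape characterization; since $\tilde f$ is computable, $\tilde f \oplus \Mcal \equiv_T \Mcal$ for every atomic model $\Mcal$ of $T_f$, and thus every such $\Mcal$ itself computes a function escaping $f$. Hence to refute $\amt \leq_c \coh$ at the instance $T_f$, it suffices to show that for every computable sequence $\vec R$ there is an $\vec R$-cohesive set $C$ such that $f$ dominates every $C$-computable total function: such a $C$ cannot compute any function escaping $f$, hence no atomic model of $T_f$.

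Given a computable $\vec R$, we build $C$ by Mathias forcing with computable reservoirs. Conditions are pairs $(F, X)$ with $F$ a finite set, $X$ an infinite computable set, and $F < X$; extensions are as in the paper. Cohesiveness for $\vec R$ is arranged by the standard density argument: at stage $i$, replace the reservoir $X$ by whichever of $X \cap R_i$, $X \cap \overline{R_i}$ is infinite. The domination requirements are handled by the following core dichotomy: for every condition $(F, X)$ and every index $e$, either some extension $(F', X') \le (F, X)$ forces $\Phi_e^G(n)\ua$ for some $n$ and every set $G$ satisfying $(F', X')$, or some extension $(F', X')$ admits a computable function $g$ such that $\Phi_e^G(n) \le g(n)$ whenever $G$ satisfies $(F', X')$ and $\Phi_e^G(n)\halts$. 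In the latter case the computable $g$ is eventually dominated by our fixed $f$, so $\Phi_e^C$ is dominated by $f$ whenever total. The dichotomy is $\emptyset'$-decidable, so iterating across all $e$ while interleaving cohesiveness requirements produces the desired $C$.

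The main obstacle is proving the dichotomy lemma. The idea is a pruning argument inside the computable reservoir. If the first alternative fails at $(F, X)$, then for every extension $(F', X') \le (F, X)$ and every $n$ there is a finite $F'' \supseteq F'$ with $F'' \setminus F' \subset X'$ such that $\Phi_e^{F''}(n)\halts$. A computable search through finite subsets of $X$ then yields, for each $n$, an upper bound on the values of such convergent computations; by slightly shrinking $X$ one obtains a reservoir $X'$ for which this bound is realized uniformly, defining the computable function $g$. The delicate point is verifying that no $G$ compatible with $(F', X')$ produces a value $\Phi_e^G(n) > g(n)$: this requires analyzing the use of the convergent computation against the searched finite portion of $X'$, arguing that any $G$-computation exceeding $g(n)$ would already have been exhibited by some finite $F'' \subseteq G$ during the search, contradicting the chosen bound.
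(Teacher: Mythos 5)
Your overall strategy---using the escape-property characterization of $\amt$ and using the universality of p-cohesive sets is the same as the paper's---but the construction you propose for the domination part does not work, and the failure is in the ``core dichotomy.''

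You fix $f$ in advance to be a $\Delta^0_2$ function that dominates every total \emph{computable} function, and you hope to construct a cohesive $C$ such that every total $C$-computable function is dominated by that same fixed $f$. To do so you need the dichotomy: for every condition $(F,X)$ and index $e$, either some extension forces $\Phi_e^G$ to be partial, or some extension $(F',X')$ admits a \emph{computable} $g$ with $\Phi_e^G(n) \le g(n)$ for all $n$ and all $G$ satisfying $(F',X')$. This dichotomy is false. Take $\Phi_e^G(n) = p_G(n)$, the $(n{+}1)$-st element of $G$: it is total for every infinite $G$, so the first horn never applies, and for any extension $(F',X')$ and any $n \ge |F'|$ the set of possible values $\{p_G(n) : G \text{ satisfies } (F',X')\}$ is cofinal in $X'$ (since $G$ may skip arbitrarily far into $X'$ before taking its $(n{+}1)$-st element), so no computable bound $g(n)$ works uniformly over compatible $G$. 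Concretely, cohesive sets for nontrivial $\vec R$ are hyperimmune, so $p_C$ already escapes every computable function; a single $f$ chosen in advance to dominate only computable functions cannot be guaranteed to dominate $p_C$.

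The paper avoids this by not fixing $f$ first. Instead it builds $f$ \emph{alongside} the cohesive set: the construction is made $\Delta^0_2$ by parallelizing the cohesiveness step (keeping all $2^n$ candidate reservoirs $R_\sigma$ rather than $\emptyset''$-choosing one), and at stage $s$ it uses $\emptyset'$ to decide, for each leaf $\sigma\in 2^s$ and each $e\le s$, whether some finite extension of $F_\sigma$ inside $R_\sigma$ makes $\Phi_e^{\cdot}(s)$ converge; if so, it extends $F_\sigma$ to realize the computation and defines $f(s)$ to exceed all the values so obtained. Because the eventual $G$ is an initial-segment extension of the relevant $F^s_\sigma$, the value $\Phi_e^G(s)$ (when defined) \emph{equals} the recorded value $\Phi_e^{F^s_\sigma}(s)$, so $f(s) > \Phi_e^G(s)$. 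This only requires controlling the specific finite approximations the construction actually produces, not bounding $\Phi_e^G$ over \emph{all} $G$ compatible with a condition, which is why it sidesteps the counterexample above. If you want to salvage your write-up, replace the fixed-$f$-plus-dichotomy plan with this ``record the converging value at each stage and let $f$ outgrow it'' scheme, carried out over the $\Delta^0_2$ tree of parallel conditions.
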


In order to prove Theorem~\ref{thm:amt-comp-reduc-coh},
we need to construct a $\Delta^0_2$ function $f$ such that 
for every uniformly computable sequence of sets~$\vec{R} = R_0, R_1, \dots$,
there is an $\vec{R}$-cohesive set~$G$ such that every~$G$-computable
function is dominated by~$f$. Thankfully, Jockusch and Stephan~\cite{Jockusch1993cohesive}
proved that for every such sequence of sets~$\vec{R}$, 
every p-cohesive set computes an infinite $\vec{R}$-cohesive set.
The sequence of all primitive recursive sets is therefore called a \emph{universal instance}.
Hence we only need to build a $\Delta^0_2$ function~$f$ and a p-cohesive set~$G$
such that every $G$-computable function is dominated by~$f$ to obtain Theorem~\ref{thm:amt-comp-reduc-coh}.

Given some uniformly computable sequence of sets~$\vec{R} = R_0, R_1, \dots$,
the usual construction of an $\vec{R}$-cohesive set~$G$ is done by a computable Mathias forcing.
The forcing conditions are pairs $(F,X)$, where~$F$ is a finite set representing the finite
approximation of~$G$ and~$X$ is an infinite, computable reservoir such that~$max(F) < min(X)$.
The construction of the~$\vec{R}$-cohesive set is obtained by building
an infinite, decreasing sequence of Mathias conditions, starting with~$(\emptyset, \omega)$
and interleaving two kinds of steps.
Given some condition~$(F,X)$,
\begin{itemize}
	\item[(S1)] the \emph{extension} step consists of taking an element $x$ from $X$ and adding it to~$F$,
	thereby forming the extension $(F \cup \{x\}, X \setminus [0,x])$;
	\item[(S2)] the \emph{cohesiveness} step consists of deciding which one of $X \cap R_i$
	and $X \cap \overline{R}_i$ is infinite, and taking the chosen one as the new reservoir.
\end{itemize}
The first step ensures that the constructed set~$G$ will be infinite, whereas
the second step makes the set $G$ $\vec{R}$-cohesive.
Looking at the effectiveness of the construction, the step (S1) is computable,
assuming we are given some Turing index of the set~$X$.
The step (S2), on the other hand, requires to decide which one of two computable sets
is infinite, knowing that at least one of them is. This decision
requires the computational power of a PA degree relative to~$\emptyset'$ (see \cite[Lemma 4.2]{Cholak2001strength}).
Since we want to build a $\Delta^0_2$ function~$f$ dominating every $G$-computable function,
we would like to make the construction of~$G$ $\Delta^0_2$. Therefore the step (S2) has to be revised.

\subsection{Effectively constructing a cohesive set}

The above construction leads to two observations.
First, at any stage of the construction, the reservoir~$X$ of the Mathias condition~$(F, X)$
has a particular shape. Indeed, after the first application of stage~(S2), 
the set $X$ is, up to finite changes, of the form $\omega \cap R_0$
or $\omega \cap \overline{R_0}$. After the second application of (S2), it is in one of the following forms: $\omega \cap R_0 \cap R_1$,
$\omega \cap R_0 \cap \overline{R}_1$, $\omega \cap \overline{R}_0 \cap R_1$,
$\omega \cap \overline{R}_0 \cap \overline{R}_1$, and so on. More generally, given some string~$\sigma \in 2^{<\omega}$,
we can define~$R_\sigma$ inductively as follows:
First, $R_\varepsilon = \omega$, and then, if $R_\sigma$ has already been defined for some string $\sigma$ of length~$i$,
$R_{\sigma 0} = R_\sigma \cap \overline{R}_i$ and~$R_{\sigma 1} = R_\sigma \cap R_i$.
By the first observation, we can replace Mathias conditions by pairs ~$(F, \sigma)$, where $F$ is a finite set
and $\sigma \in 2^{<\omega}$. The pair~$(F, \sigma)$ denotes the Mathias condition~$(F, R_\sigma \setminus [0, max(F)])$.
A pair $(F, \sigma)$ is \emph{valid} if $R_\sigma$ is infinite.
The step (S2) can be reformulated as choosing, given some valid condition $(F, \sigma)$, which one of $(F, \sigma 0)$
and $(F, \sigma 1)$ is valid.






Second, we do not actually need to decide which one of~$R_{\sigma 0}$ and~$R_{\sigma 1}$ is infinite
assuming that~$R_{\sigma}$ is infinite. Our goal is to dominate every~$G$-computable function with a $\Delta^0_2$ function $f$.
Therefore, given some $G$-computable function~$g$, it is sufficient to find a finite set $S$ of candidate values for~$g(x)$ 
and make~$f(x)$ be greater than the maximum of $S$. Instead of choosing which one of~$R_{\sigma 0}$ and~$R_{\sigma 1}$ is infinite,
we will explore both cases in parallel. The step (S2) will split some condition $(F, \sigma)$
into two conditions~$(F, \sigma 0)$ and $(F, \sigma 1)$. Our new forcing conditions are therefore tuples $(F_\sigma : \sigma \in 2^n)$
which have to be thought of as $2^n$ parallel Mathias conditions $(F_\sigma, \sigma)$ for each~$\sigma \in 2^n$.
Note that $(F_\sigma, \sigma)$ may not denote a valid Mathias condition in general since $R_\sigma$ may be finite.
Therefore, the step (S1) becomes~$\Delta^0_2$, since we first have to check whether $R_\sigma$ is non-empty
before picking an element in~$R_\sigma$. The whole construction is $\Delta^0_2$ and yields a $\Delta^0_2$ infinite
binary tree $T$. In particular, any degree PA relative to $\emptyset'$ bounds an infinite path though $T$ and therefore bounds a $G$-cohesive set. However, the degree of the set $G$ is not sensitive in our argument. We only care about the effectiveness of
the tree~$T$.

\subsection{Dominating the functions computed by a cohesive set}

We have seen in the previous section how to make the construction of a cohesive set more effective
by postponing the choices between forcing~$G \subseteq^{*} R_i$ and~$G \subseteq^{*} \overline{R}_i$
to the end of the construction. We now show how to dominate every $G$-computable function
for every infinite path~$G$ through the $\Delta^0_2$ tree constructed in the previous section.
To do this, we will interleave a third step deciding whether $\Phi^G_e(n)$ halts, and if so, collecting the 
candidate values of~$\Phi^G_e(n)$.
Given some Mathias precondition~$(F, X)$ (a precondition is a condition 
where we do not assume that the reservoir is infinite) and some $e,x \in \omega$, we can $\Delta^0_2$-decide 
whether there is some set $E \subseteq X$ such that~$\Phi^{F \cup E}_e(x) \downarrow$.
If this is the case, then we can effectively find this a finite set~$E \subseteq X$ and
compute the value~$\Phi^{F \cup E}_e(x)$. If this is not the case, then for every infinite set~$G$ satisfying
the condition~$(F, X)$, the function~$\Phi^G_e$ will not be defined on input~$x$. In this case,
our goal is vacuously satisfied since $\Phi^G_e$ will not be a function and therefore
we do not need do dominate~$\Phi^G_e$. 
Let us go back to the previous construction.
After some stage, we have constructed a condition~$(F_\sigma : \sigma \in 2^n)$ inducing a finite tree of depth~$n$. 
The step (S3) acts as follows for some~$x \in \omega$:
\begin{itemize}
	\item[(S3)] Let~$S = \{0\}$. For each~$\sigma \in 2^n$ and each~$e \leq x$, decide whether 
	there is some finite set~$E \subseteq R_\sigma \setminus [0, max(F_\sigma)]$ such that~$\Phi^{F_\sigma \cup E}_e(x) \downarrow$.
	If this is the case, add the value of $\Phi^{F_\sigma \cup E}_e(x)$ to $S$ and set~$\tilde{F}_\sigma = F_\sigma \cup E$, otherwise set $\tilde{F}_\sigma = F_\sigma$.
	Finally, set~$f(x) = max(S)+1$ and take~$(\tilde{F}_\sigma : \sigma \in 2^n)$ as the next condition.
\end{itemize}
Note that the step (S3) is $\Delta^0_2$-computable uniformly in the condition~$(F_\sigma : \sigma \in 2^n)$.
The whole construction therefore remains~$\Delta^0_2$ and so does the function~$f$.
Moreover, given some $G$-computable function~$g$, there is some Turing index~$e$ such that~$\Phi^G_e = g$.
For each $x \geq e$, the step (S3) is applied at a finite stage and decides whether~$\Phi^G_e(x)$ halts or not
for every set satisfying one of the leaves of the finite tree. In particular, this is the case for the set $G$ and 
therefore $\Phi^G_e(x) \in S$. By definition of $f$, $f(x) \geq max(S) \geq \Phi^G_e(x)$. Therefore $f$ dominates the function~$g$.

\subsection{The formal construction}

Let~$\vec{R} = R_0, R_1, \dots$ be the sequence of all primitive recursive sets.
We define a $\Delta^0_2$ decreasing sequence of conditions~$(\emptyset, \varepsilon) \geq c_0 \geq c_1 \dots$
such that for each~$s \in \omega$
\begin{itemize}
	\item[(i)] $c_s = (F^s_\sigma : \sigma \in 2^s)$ and~$|F^s_\sigma| \geq s$ if~$R_\sigma \setminus [0, max(F^s_\sigma)] \neq \emptyset$.
	\item[(ii)] For every~$e \leq s$ and every~$\sigma \in 2^s$, either $\Phi^{F^s_\sigma}_e(s) \downarrow$
	or $\Phi^G_e(s) \uparrow$ for every set~$G$ satisfying $(F^s_\sigma, R_\sigma)$.
\end{itemize}
Let~$P$ be a path through the tree $T = \{ \sigma \in 2^{<\omega} : R_\sigma \mbox{ is infinite} \}$
and let~$G = \bigcup_s F^s_{P \restr s}$. By (i), for each~$s \in \omega$, $|F^s_{P \restr s}| \geq s$
since~$R_{P \restr s}$ is infinite. Therefore the set~$G$ is infinite.
Moreover, for each~$s \in \omega$, the set~$G$ satisfies the condition~$(F^{s+1}_{P \restr s+1}, R_{P \restr s+1})$,
so~$G \subseteq^{*} R_{P \restr s+1} \subseteq R_s$ if~$P(s) = 1$ and
$G \subseteq^{*} R_{P \restr s+1} \subseteq \overline{R}_s$ if~$P(s) = 0$. 
Therefore~$G$ is $\vec{R}$-cohesive.

For each~$s \in \omega$, let~$f(s) = 1 + max(\Phi^{F^s_\sigma}_e(s) : \sigma \in 2^s, e \leq s)$.
The function~$f$ is $\Delta^0_2$. We claim that it dominates every $G$-computable function.
Fix some~$e$ such that~$\Phi^G_e$ is total. For every~$s \geq e$, let~$\sigma = P \restr s$. By (ii), either
$\Phi^{F^s_\sigma}_e(s) \downarrow$ or $\Phi^G_e(s) \uparrow$ for every
set~$G$ satisfying $(F^s_\sigma, R_\sigma)$. Since $\Phi^G_e(s) \downarrow$,
the first case holds. By definition of~$f$, $f(s) \geq \Phi^{F^s_\sigma}_e(s) = \Phi^G_e(s)$.
Therefore~$f$ dominates the function $\Phi^G_e$. This completes the proof of Theorem~\ref{thm:amt-comp-reduc-coh}.

\section{The weakness of~$\emo$ under computable reducibility}\label{sect:emo-computable-reducibility}

We now strengthen the analysis of the previous section by
proving that the atomic model theorem is not computably reducible to the Erd\H{o}s-Moser theorem.
Theorem~\ref{thm:amt-comp-reduc-coh} is an immediate consequence of this result since
$[\amt \vee \coh] \leq_c \emo$ (see~\cite{Patey2015Somewhere}).
After this section, we will be ready to iterate the construction
in order to build an $\omega$-model of~$\emo \wedge \coh$ which is not a model of~$\amt$.

\begin{theorem}\label{thm:amt-comp-reduc-em-coh}
$\amt \not \leq_c \emo$
\end{theorem}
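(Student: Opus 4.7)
The plan is to adapt the construction of Theorem~\ref{thm:amt-comp-reduc-coh}, replacing cohesiveness forcing by Erd\H{o}s-Moser forcing, and to run the adapted construction in parallel over all computable tournaments, since $\emo$ (unlike $\coh$) lacks a convenient universal computable instance. By the characterization of $\amt$ through the escape property recalled in Section~\ref{subsect:dominating-amt}, it is enough to produce a single $\Delta^0_2$ function $f$ such that, for every computable tournament $T$, there is an infinite transitive subtournament $G$ of $T$ every computable function of which is dominated by $f$. Given such an $f$, any $\Delta^0_2$ approximation $\tilde f$ yields a computable complete atomic theory $\Tcal$ whose atomic models all compute functions escaping $f$; since no $G$ arising from our construction can compute such a function, $\Tcal$ is an $\amt$-instance witnessing $\amt \not\leq_c \emo$.

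The first step is to replay Section~2 with Erd\H{o}s-Moser conditions replacing cohesiveness conditions, for a single fixed computable tournament $T$. The EM-forcing conditions will be pairs $(F, X)$ with $F$ a finite set inducing a $T$-transitive subtournament, $X$ an infinite reservoir satisfying $F < X$ numerically, and every $y \in F$ ``decided'' on $X$ in the sense that either $T(y, x)$ holds for all $x \in X$ or $T(x, y)$ holds for all $x \in X$. The analogue of the cohesiveness step~(S2) is the extension step: adding some $x \in X$ to $F$ forces a refinement of $X$ to one of the two subsets $X \cap \{y : T(x,y)\}$ or $X \cap \{y : T(y,x)\}$, at least one of which must remain infinite, but deciding which is a $\Pi^0_2$ question. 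Following Section~2, I will defer this choice by maintaining both branches in parallel, so that conditions become tuples $(F_\sigma : \sigma \in 2^n)$ indexed by a binary tree of deferred directional choices. The extension steps, together with the analogue of step~(S3) -- collecting candidate values $\Phi^{F_\sigma \cup E}_e(s)$ and declaring $f(s)$ strictly larger than their maximum -- can then be performed $\Delta^0_2$-uniformly, yielding a $\Delta^0_2$ binary tree whose infinite paths are exactly infinite transitive subtournaments of $T$, each of whose computable functions is dominated by $f$ by the same argument as in Section~2.

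The second step is to combine these constructions for all computable tournaments at once. Fix an effective enumeration $(T_i)_{i \in \omega}$ of partial computable tournaments. At stage $s$ I will perform one extension step (with its associated deferred split) in the tree of conditions for each $T_i$ with $i \leq s$, and then execute the domination step across all currently active data: for each pair $(i, e)$ with $i, e \leq s$ and each leaf $\sigma$ of the current tree for $T_i$, I will $\Delta^0_2$-decide whether some finite $E$ within the relevant reservoir makes $\Phi_e^{F^s_{i, \sigma} \cup E}(s) \da$; if so, I fold $E$ into $F^s_{i, \sigma}$ and collect the computed value into a finite set $S_s$. Setting $f(s) = \max(S_s) + 1$ then yields a single $\Delta^0_2$ function dominating every function computable from any infinite transitive subtournament arising as an infinite path through any of the trees associated with the $T_i$.

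The main obstacle will be verifying that the simultaneous maintenance of the binary branches preserves the Erd\H{o}s-Moser structural invariants: specifically, that for every valid condition $(F, X)$ and every element $x \in X$ added during an extension step, at least one of the two child conditions is again valid, so that each tree of conditions stays infinite and every infinite path genuinely corresponds to an infinite transitive subtournament of $T_i$. This reduces to the combinatorial core of $\emo$ and should go through as in the Lerman, Solomon and Towsner analysis. A secondary bookkeeping concern will be ensuring that the tree of conditions for each $T_i$ is uniformly $\Delta^0_2$ in $i$, so that the single dominating function $f$ stays $\Delta^0_2$; this is handled by the standard diagonalization across $(i, e)$-pairs already implicit in Section~2.
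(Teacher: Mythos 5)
Your outline correctly identifies the high-level strategy (replace the cohesiveness Mathias conditions with Erd\H{o}s-Moser conditions, parallelize the $\Pi^0_2$ choices, dominate via the escape-property characterization of $\amt$), but there is a genuine gap in the adaptation of step (S3), and it is exactly where the combinatorial content of the paper lies.

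In the cohesiveness argument, adding a finite set $E$ to $F_\sigma$ during the domination step does not change the shape of the binary tree: the reservoir $R_\sigma$ is determined in advance by the external sequence $\vec{R}$, and folding $E$ merely raises the threshold. In the Erd\H{o}s-Moser setting, folding a finite $E$ into $F_\sigma$ necessarily \emph{forks} the reservoir (into roughly one slot per minimal $R$-interval of $F_\sigma\cup E$, i.e., up to $2^{|E|}$ directions, not two), and with the question you pose --- ``is there a finite $E$ with $\Phi_e^{F_\sigma\cup E}(s)\halts$?'' --- convergence is forced on only \emph{one} of the resulting forks. The path $P$ you ultimately select may pass through a fork on which neither $\Phi_e^{G}(s)\halts$ nor $\Phi_e^{G}(s)\uparrow$ is forced at stage $s=\tuple{e,s}$, and that stage is never revisited, so $f(s)$ may fail to dominate $\Phi_e^{G(P)}(s)$. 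Iterating your simple question on the leftover forks within the same stage does not obviously terminate either, since each YES answer can create more undecided forks than it resolves. This is precisely what the paper's question Q2 is engineered to repair: it asks for a finite pool $E$ such that for \emph{every} $2$-partition $\tuple{E_0,E_1}$ of $E$ there is an $R$-transitive $F_1\subseteq E_i$ with $\Phi_e^{F\cup F_1}(s)\halts$. A YES answer forces convergence simultaneously on all forked parts (each fork receiving its own $F_1$, not the whole pool $E$); a NO answer yields, by compactness, a nonempty $\Pi^{0,C}_1$ class of $2$-covers of the reservoir on every part of which divergence is forced. This ``try every possibility, but with a witness tailored to each branch'' structure, implemented via $\Pi^0_1$ cover classes rather than a binary tree of strings, is what makes Lemma~\ref{lem:em-comp-reduc-force-dense} terminate ($|I_{e,x}|$ strictly decreases) and is the step your proposal elides by saying it ``should go through as in the Lerman, Solomon and Towsner analysis.''

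A secondary issue: you cannot effectively enumerate all infinite computable tournaments, and running your construction against all partial computable tournament functionals uniformly in $\Delta^0_2$ requires care. The paper resolves this by relativizing the construction to a low subuniform set $C$ bounding a $C$-uniform listing that contains all infinite computable tournaments; this keeps the resulting dominating function $\Delta^0_2$. Your phrase ``effective enumeration of partial computable tournaments'' does not by itself justify why the construction remains $\Delta^0_2$ while tracking which $T_i$ are legitimate instances.
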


Before proving Theorem~\ref{thm:amt-comp-reduc-em-coh},
we start with an analysis of the combinatorics of the Erd\H{o}s-Moser theorem.
Just as we did for cohesiveness, we will show how to build solutions to $\emo$ through $\Delta^0_2$ constructions, 
postponing the $\Pi^0_2$ choices to the end.

\subsection{The combinatorics of the Erd\H{o}s-Moser theorem}\label{subsect:combi-em}

The standard way of building an infinite object by forcing consists of defining an increasing
sequence of finite approximations, and taking the union of them. Unlike
$\coh$ where every finite set can be extended to an infinite cohesive set,
some finite transitive subtournaments may not be extensible to an infinite one.
We therefore need to maintain some extra properties which will guarantee that
the finite approximations are extendible.
The nature of these properties constitue the core of the combinatorics of $\emo$.

Lerman, Solomon and Towsner~\cite{Lerman2013Separating} proceeded to an analysis
of the Erd\H{o}s-Moser theorem.
They showed in particular that it suffices to ensure that the finite transitive subtournament~$F$
has infinitely many \emph{one-point extensions}, that is, infinitely many elements~$x$ such that
$F \cup \{x\}$ is transitive, to extend~$F$ to an infinite transitive subtournament (see~\cite[Lemma 3.4]{Lerman2013Separating}).
This property is sufficient to add elements one by one to the finite approximation.
However, when adding elements by block, we shall maintain a stronger invariant. We will require that
the reservoir is included in a minimal interval of the finite approximation~$F$.
In this section, we reintroduce the terminology of Lerman, Solomon and Towsner~\cite{Lerman2013Separating}
and give a presentation of the combinatorics of the Erd\H{o}s-Moser theorem
motivated by its computational analysis.

\begin{definition}[Minimal interval]
Let $R$ be an infinite tournament and $a, b \in R$
be such that $R(a,b)$ holds. The \emph{interval} $(a,b)$ is the
set of all $x \in R$ such that $R(a,x)$ and $R(x,b)$ hold.
Let $F \subseteq R$ be a finite transitive subtournament of $R$.
For $a, b \in F$ such that $R(a,b)$ holds, we say that $(a,b)$
is a \emph{minimal interval of $F$} if there is no $c \in F \cap (a,b)$,
i.e., no $c \in F$ such that $R(a,c)$ and $R(c,b)$ both hold.
\end{definition}

Fix a computable tournament~$R$, and consider a pair $(F, X)$ where
\begin{itemize}
	\item[(i)] $F$ is a finite $R$-transitive set representing the \emph{finite approximation}
	of the infinite $R$-transitive subtournament we want to construct
	\item[(ii)] $X$ is an infinite set disjoint from $F$, included in a minimal interval of~$F$
	and such that $F \cup \{x\}$ is $R$-transitive for every $x \in X$. In other words,
	$X$ is an infinite set of one-point extensions.
	Such a set $X$ represents the \emph{reservoir}, that is, a set of candidate
	elements we may add to~$F$ later on.
\end{itemize}

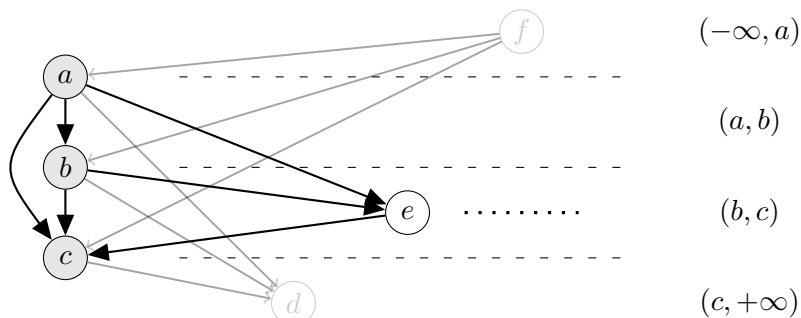
\begin{figure}[htbp]
\begin{center}
\begin{tikzpicture}[x=1.5cm, y=1.2cm, 
		node/.style={circle, draw, fill=lightblue, inner sep=0pt, minimum size=1.5em}, 
		arrow/.style={black,thick,->,-triangle 45},
		good/.style={fill=white},
		hiddenarrow/.style={black,thick,->, opacity=0.2},
		hidden/.style={opacity=0.2}
]

  \node[node] (a) at (1, 2) {$a$};
	\node[node] (b) at (1, 1) {$b$};
	\node[node] (c) at (1, 0) {$c$};
	\node[node,good,hidden] (d) at (3, -0.5) {$d$};
	\node[node,good] (e) at (4, 0.5) {$e$};
	\node[node,good,hidden] (f) at (5, 2.5) {$f$};

	\node at (7, 2.5) {$(-\infty, a)$};
	\node at (7, 1.5) {$(a, b)$};
	\node at (7, 0.5) {$(b, c)$};
	\node at (7, -0.5) {$(c, +\infty)$};

	\draw[arrow] (a) -- (b);
	\draw[arrow] (b) -- (c);
	\draw[arrow] (a)  .. controls (0.4,1) .. (c);

	\draw[hiddenarrow] (a) -- (d);
	\draw[hiddenarrow] (c) -- (d);
	\draw[hiddenarrow] (b) -- (d);

	\draw[arrow] (a) -- (e);
	\draw[arrow] (e) -- (c);
	\draw[arrow] (b) -- (e);

	\draw[hiddenarrow] (f) -- (a);
	\draw[hiddenarrow] (f) -- (c);
	\draw[hiddenarrow] (f) -- (b);

	\draw[very thick, loosely dotted] (4.5,0.5) -- (5.5,0.5);

	\draw[loosely dashed] (2, 2) -- (6, 2);
	\draw[loosely dashed] (2, 1) -- (6, 1);
	\draw[loosely dashed] (2, 0) -- (6, 0);
\end{tikzpicture}
\end{center}
\caption{In this figure, $F = \{a, b, c\}$ is a transitive set,
$X = \{d, e, f, \dots \}$ a set of one-point extensions, $(b, c) = \{e, \dots \}$ a minimal interval of~$F$ 
and $(F, X \cap (b, c))$ an EM condition. The elements~$d$ and~$f$ are not part of the minimal interval~$(b, c)$.} 
\end{figure}

The infinite set~$X$ ensures extensibility of the finite set~$F$ into an infinite $R$-transitive
subtournament. Indeed, by applying the Erd\H{o}s-Moser theorem to $R$ over the domain $X$, there exists an infinite $R$-transitive
subtournament $H \subseteq X$. One easily checks that $F \cup H$ is $R$-transitive.
The pair $(F, X)$ is called an Erd\H{o}s-Moser condition in~\cite{Patey2015Degrees}.
A set~$G$ \emph{satisfies} an EM condition~$(F, X)$ if it is $R$-transitive and satisfies the Mathias condition~$(F, X)$.
In order to simplify notation, given a tournament $R$ and two sets~$E$ and~$F$,
we denote by $E \to_R F$ the formula $(\forall x \in E)(\forall y \in F) R(x,y)$.

Suppose now that we want to add a finite number of elements of~$X$ into $F$ to obtain
a finite $T$-transitive set $\tilde{F} \supseteq F$,
and find an infinite subset $\tilde{X} \subseteq X$ such that $(\tilde{F}, \tilde{X})$
has the above mentioned properties. We can do this in a few steps:

\begin{itemize}
	\item[1.] Choose a finite (not necessarily $R$-transitive) set $E \subset X$.
	\item[2.] Any element $x \in X \setminus E$ induces a 2-partition $\tuple{E_0, E_1}$ of $E$
	by setting $E_0 = \{y \in E : R(y, x) \}$ and $E_1 = \{y \in E : R(x, y)\}$.
	Consider the coloring $f$ which associates to any element of $X \setminus E$ the corresponding 2-partition $\tuple{E_0, E_1}$ of $E$.
	\item[3.]
	As~$E$ is finite, there exists finitely many 2-partitions of~$E$, so $f$ colors each element of $X \setminus E$ into
	finitely many colors. By Ramsey's theorem for singletons applied to~$f$, there exists a 2-partition $\tuple{E_0, E_1}$ of $E$
	together with an infinite subset $\tilde{X} \subseteq X \setminus E$ such that for every $x \in \tilde{X}$, $f(x) = \tuple{E_0, E_1}$.
	By definition of~$f$ and~$E_i$, $E_0 \to_R \tilde{X} \to_R E_1$.
	
	\item[4.] Take any $R$-transitive subset $F_1 \subseteq E_i$ for some~$i < 2$ and set $\tilde{F} = F \cup F_1$.
	The pair $(\tilde{F}, \tilde{Y})$ satisfies the required properties (see~\cite[Lemma 5.9]{Patey2015Degrees} for a proof).
\end{itemize}

From a computational point of view, if we start with a computable condition~$(F, X)$, that is, where~$X$ is a computable set,
we end up with a computable extension~$(\tilde{F}, \tilde{Y})$.
Remember that our goal is to define a $\Delta^0_2$ function~$f$ which will dominate every $G$-computable function
for some solution~$G$ to~$R$. For this, we need to be able to~$\emptyset'$-decide
whether~$\Phi^G_e(n) \downarrow$ or $\Phi^G_e(n) \uparrow$ for every solution~$G$ to~$R$ satisfying 
some condition~$(F, X)$. 
More generally, given some~$\Sigma^0_1$ formula $\varphi$, we focus on the computational power required to decide a question of the form

\smallskip
{\itshape
Q1: Is there an $R$-transitive extension $\tilde{F}$ of $F$ in $X$ such that $\varphi(\tilde{F})$ holds?
}
\smallskip

Trying to apply naively the algorithm above requires a lot of computational power.
In particular, step 3 requires to choose a true formula among finitely many $\Pi^{0, X}_2$ formulas.
Such a step needs the power of PA degree relative to the jump of~$X$.
We shall apply the same trick as for cohesiveness, consisting in not trying to choose a true $\Pi^{0,X}_2$ formula,
but instead parallelizing the construction. Given a finite set $E \subset X$, 
instead of finding an infinite subset $\tilde{Y} \subset X \setminus E$
whose members induce a 2-partition of $E$, we will construct
as many extensions of $(F, X)$ as there are 2-partitions of~$E$. The question now becomes

\smallskip
{\itshape
Q2: Is there a finite set $E \subseteq X$ such that for every 2-partition $\tuple{E_0, E_1}$ of~$E$,
there exists an $R$-transitive subset $F_1 \subseteq E_i$ for some $i < 2$ such that $\varphi(F \cup F_1)$ holds?
}
\smallskip

This question is $\Sigma^{0,X}_1$, which is good enough for our purposes.
If the answer is positive, we will try the witness $F_1$ associated to each 2-partition of $E$ in parallel.
Note that there may be some 2-partition $\tuple{E_0, E_1}$ of~$E$
such that the set $Y = \{ x \in X \setminus E : E_0 \to_R \{x\} \to_R E_1 \}$ is finite,
but this is not a problem since there is \emph{at least}
one good 2-partition such that the corresponding set is infinite. 
The whole construction yields again a tree of pairs~$(F, X)$.

If the answer is negative, we want to ensure that
$\varphi(\tilde{F})$ will not hold at any further stage of the construction.
For each~$n \in \omega$, let $H_n$ be the set of the $n$ first elements of~$X$.
Because the answer is negative, for each~$n \in \omega$, there exists a 2-partition $\tuple{E_0, E_1}$
of~$H_n$ such that for every $R$-transitive subset $F_1 \subseteq E_i$ for any $i < 2$, $\varphi(F \cup F_1)$ does not hold.
Call such a 2-partition an \emph{avoiding} partition of $H_n$. 
Note that if $\tuple{E_0, E_1}$ is an avoiding partition of $H_{n+1}$, then $\tuple{E_0 \uh n, E_1 \uh n}$
is an avoiding partition of $H_n$. So the set of avoiding 2-partitions of some $H_n$
forms an infinite tree~$T$. Moreover, the predicate ``$\tuple{E_0, E_1}$ is an avoiding partition of $H_n$''
is $\Delta^{0, H_n}_1$ so the tree $T$ is $\Delta^{0,X}_1$. The collection of the infinite paths through $T$
forms a non-empty $\Pi^{0,X}_1$ class $\Ccal$ defined as the collection of 2-partitions $Z_0 \cup Z_1 = X$
such that for every $i < 2$ and every $R$-transitive subset $F_1 \subseteq Z_i$, $\varphi(F \cup F_1)$
does not hold.

The natural next step would be to apply weak K\"onig's lemma to obtain a 2-partition of~$X$
such that for every finite $R$-transitive subset $F_1$ of any of its parts, $\varphi(F \cup F_1)$ does not hold.
By the low basis theorem, we could take the 2-partition to be low over~$X$ and the whole construction would remain~$\Delta^0_2$.
However, when iterating the construction, we will be given only finite pieces of tournaments
since the tournament may depend on an oracle being constructed at a previous iteration. In this setting,
it will be impossible to compute a member of the $\Pi^{0,X}_1$ class $\Ccal$ of 2-partitions, since 
we will have access to only a finite piece of the corresponding tree~$T$.
In order to get progressively prepared to the iterated forcing, we will not apply $\wkl$
and will work with $\Pi^0_1$ classes of 2-partitions.
Therefore, if the answer is negative, we duplicate the finite $R$-transitive $F$ into two sets $F_0 = F_1 = F$,
and commit~$F_i$ to take from now on its next elements from~$X_i$ for some 2-partition
$X_0 \cup X_1 = X$ belonging to the $\Pi^0_1$ class~$\Ccal$ of 2-partitions witnessing the negative answer.
Iterating the process by asking several questions leads to tuples $(F_0, \dots, F_{k-1}, \Ccal)$
where $F_i$ is a finite $R$-transitive set taking its elements from the $i$th part of the class~$\Ccal$ of $k$-partitions.
This notion of forcing will be defined formally in a later section.

\subsection{Enumerating the computable infinite tournaments}

Proving that some principle~$\Psf$ does not computably reduce to~$\Qsf$
requires to create a $\Psf$-instance~$X$ such that \emph{every} $X$-computable $\Qsf$-instance
has a solution~$Y$ such that~ $Y \oplus X$ does not compute a solution to~$X$.
In the case of $\amt \not \leq_c \coh$, we have been able to restrict ourselves to only one instance of $\coh$,
since Jockusch and Stephan~\cite{Jockusch1993cohesive} showed it admits a universal instance.
It is currently unknown whether the Erd\H{o}s-Moser theorem admits a universal instance, that is, a computable infinite tournament
such that for every infinite transitive subtournament $H$ and for every computable infinite tournament $T$,
$H$ computes an infinite transitive $T$-subtournament. See~\cite{Patey2015Degrees} for an extensive study of the existence
of universal instances for principles in reverse mathematics.

Since we do not know whether $\emo$ admits a universal instance, we will need to diagonalize against
the solutions to every computable $\emo$-instance. In fact, we will prove a stronger result. We will construct
a $\Delta^0_2$ function~$f$ and an infinite set~$G$ which is eventually transitive simultaneously for every computable infinite tournament,
and such that $f$ dominates every $G$-computable function. There exists no computable sequence of sets
containing all computable sets. Therefore it is not possible to computably enumerate every infinite computable tournament.
However, one can define an infinite, computable, binary tree such that every infinite path
computes such a sequence. 
See the notion of sub-uniformity defined by Mileti in~\cite{Mileti2004Partition} for details.
By the low basis theorem, there exists a low set bounding a sequence containing, 
among others, every infinite computable tournament.
As we shall prove below,  for every set~$C$ and every uniformly $C$-computable sequence of infinite tournaments~$\vec{R}$,
there exists a set~$G$ together with a $\Delta^{0, C}_2$ function $f$ such that
\begin{itemize}
	\item[(i)] $G$ is eventually $R$-transitive for every $R \in \vec{R}$
	\item[(ii)] If $\Phi^{G \oplus C}_e$ is total, then it is dominated by $f$ for every $e \in \omega$.
\end{itemize}
Thus it suffices to choose~$C$ to be our low set and $\vec{R}$ to be a uniformly $C$-computable sequence
of infinite tournaments containing every computable tournament to deduce the existence of a set~$G$ together
with a $\Delta^0_2$ function $f$ such that 
\begin{itemize}
	\item[(i)] $G$ is eventually $R$-transitive for every infinite, computable tournament $R$
	\item[(ii)] If $\Phi^{G \oplus C}_e$ is total, then it is dominated by $f$ for every $e \in \omega$
\end{itemize}

By the computable equivalence between $\amt$ and the escape property,
there exists a computable atomic theory $T$ such that every atomic model computes
a function~$g$ not dominated by~$f$. If $\amt \leq_c \emo$, then there exists
an infinite, computable tournament~$R$ such that every infinite $R$-transitive subtournament 
computes a model of~$T$, hence computes a function~$g$ not dominated by~$f$.
As the set~$G$ is, up to finite changes, an infinite $R$-transitive subtournament,
$G$ computes such a function~$g$, contradicting our hypothesis. Therefore $\amt \not \leq_c \emo$.

\subsection{Cover classes}

In this part, we introduce some terminology about classes of $k$-covers.
Recall that a $k$-cover of some set $X$ is a $k$-uple $A_0, \dots, A_{k-1}$ such that~$A_0 \cup \dots \cup A_{k-1} = X$.
In particular, the sets are not required to be pairwise disjoint.

\smallskip
\emph{Cover class}.
We identify a $k$-cover $Z_0 \cup \dots \cup Z_{k-1}$ of some set $X$ with the $k$-fold join of its parts
$Z = \bigoplus_{i < k} Z_i$, and refer this as a \emph{code} for the cover.
A \emph{$k$-cover class} of some set~$X$ is a tuple $\tuple{k, X, \Ccal}$
where $\Ccal$ is a collection of codes of $k$-covers of~$X$. 
We will be interested in $\Pi^0_1$ $k$-cover classes.
A \emph{part} of a $k$-cover class $\tuple{k, X, \Ccal}$ is a number $\nu < k$. Informally, a part $\nu$
represents the collection of all $Z_\nu$, where $Z_0 \oplus \dots \oplus Z_{k-1} \in \Ccal$.
For the simplicity of notation, we may use the same letter~$\Ccal$ to denote both a $k$-cover class~$(k, X, \Ccal)$
and the actual collection of $k$-covers~$\Ccal$. We then write $dom(\Ccal)$ for $X$
and $parts(\Ccal)$ for~$k$.

\smallskip
\emph{Restriction of a cover}. Given some $k$-cover $Z = Z_0 \oplus \dots \oplus Z_{k-1}$ of some set~$X$ and given some set~$Y \subseteq X$, we write $Z \restr Y$ for the $k$-cover $(Z_0 \cap Y) \oplus \dots \oplus (Z_{k-1} \cap Y)$ of~$Y$.
Similarly, given some cover class~$(k, X, \Ccal)$ and some set~$Y \subseteq X$, we denote by $\Ccal \restr Y$
the cover class~$(k, Y, \Dcal)$ where $\Dcal = \{ Z \restr Y : Z \in \Ccal \}$.
Given some part~$\nu$ of $\Ccal$ and some set~$E$, we write~$\Ccal^{[\nu, E]}$
for the cover class~$(k, X, \Dcal)$ where 
$\Dcal = \{ Z_0 \oplus \dots \oplus Z_{k-1} \in \Ccal : E \subseteq Z_\nu \}$.

\smallskip
\emph{Refinement}. The collection of cover classes can be given a natural partial order as follows.
Let~$m \geq k$ and $f : m \to k$. An $m$-cover $V_0 \oplus \dots \oplus V_{m-1}$ of $Y$ \emph{$f$-refines}
a $k$-cover $Z_0 \oplus \dots \oplus Z_{k-1}$ of $X$ if $Y \subseteq X$ and $V_\nu \subseteq Z_{f(\nu)}$ for each~$\nu < m$.
Given two cover classes $(k, X, \Ccal)$ and~$(m, Y, \Dcal)$
and some function $f : m \to k$, we say that $\Dcal$ \emph{$f$-refines} $\Ccal$
if for every $V \in \Dcal$, there is some~$Z \in \Ccal$ such that $V$ $f$-refines $Z$.
In this case, we say that \emph{part $\nu$ of $\Dcal$ refines part~$f(\nu)$ of~$\Ccal$}.

\smallskip
\emph{Acceptable part}. 
We say that part $\nu$ of $\Ccal$ is \emph{acceptable} if there exists some $Z_0 \oplus \dots \oplus Z_{k-1} \in \Ccal$
such that $Z_\nu$ is infinite. Part $\nu$ of $\Ccal$ is \emph{empty} if 
for every $Z_0 \oplus \dots \oplus Z_{k-1} \in \Ccal$, $Z_\nu = \emptyset$.
Note that if $\Ccal$ is non-empty and $dom(\Ccal)$ is infinite, then $\Ccal$ has at least one acceptable part.
Moreover, if~$\Dcal \leq_f \Ccal$ and part~$\nu$ of~$\Dcal$ is acceptable, then so is part~$f(\nu)$ of $\Ccal$.
The converse does not hold in general.

\subsection{The forcing notion}

We now get into the core of our forcing argument by defining
the forcing notion which will be used to build an infinite set eventually
transitive for every infinite computable tournament.
Fix a set $C$ and a uniformly $C$-computable sequence of infinite tournaments $R_0, R_1, \dots$
We construct our set~$G$ by a forcing whose conditions are tuples $(\alpha, \vec{F}, \Ccal)$ where
\begin{itemize}
	\item[(a)] $\Ccal$ is a non-empty $\Pi^{0,C}_1$ $k$-cover class of $[t, +\infty)$ 
	for some $k, t \in \omega$ ; $\alpha \in t^{<\omega}$
	\item[(b)] $(F_\nu \setminus [0, \alpha(i))) \cup \{x\}$ is $R_i$-transitive for every $Z_0 \oplus \dots \oplus Z_{k-1} \in \Ccal$,
	every $x \in Z_\nu$, every $i < |\alpha|$ and each $\nu < k$
	\item[(c)] $Z_\nu$ is included in a minimal $R_i$-interval of $F_\nu \setminus [0, \alpha(i))$
	for every $Z_0 \oplus \dots \oplus Z_{k-1} \in \Ccal$, every $i < |\alpha|$ and each $\nu < k$.
\end{itemize}
A condition $(\beta, \vec{E}, \Dcal)$ \emph{extends} 
$(\alpha, \vec{F}, \Ccal)$
(written $(\beta, \vec{E}, \Dcal) \leq (\alpha, \vec{F}, \Ccal)$) if $\beta \succeq \alpha$
and there exists a function $f : parts(\Dcal) \to parts(\Ccal)$ such that the following holds:
\begin{itemize}
	\item[(i)] $(E_\nu, dom(\Dcal))$ Mathias extends $(F_{f(\nu)}, dom(\Ccal))$ for each $\nu < parts(\Dcal)$ 
	\item[(ii)] $\Dcal$ $f$-refines $\Ccal^{[f(\nu), E_\nu \setminus F_{f(\nu)}]}$ for each $\nu < parts(\Dcal)$
\end{itemize}

One may think of a condition $(\alpha, \vec{F}, \Ccal)$ with, say, $parts(\Ccal) = k$,
as $k$ parallel Mathias conditions which are,
up to finite changes, Erd\H{o}s-Moser conditions simultaneously for the tournaments $R_0, \dots, R_{|\alpha|-1}$.
Given some $i < |\alpha|$, the value $\alpha(i)$ indicates at which point the sets $\vec{F}$
start being $R_i$-transitive.
More precisely, for every part $\nu < k$ and every $k$-cover $Z_0 \oplus \dots \oplus Z_{k-1} \in \Ccal$,
$(F_\nu \setminus [0, \alpha(i)), Z_\nu)$ is an Erd\H{o}s-Moser condition for $R_i$ for each $i < |\alpha|$.
Indeed, because of clause~(i), the elements $E_\nu \setminus F_{f(\nu)}$ added to $E_\nu$ come from $dom(\Ccal)$
and because of clause~(ii), these elements must come from the part $f(\nu)$ of the class~$\Ccal$,
otherwise $\Ccal^{[f(\nu), E_\nu \setminus F_{f(\nu)}]}$ would be empty and so would be $\Dcal$.

Of course, there may be some parts~$\nu$ of $\Ccal$ which are non-acceptable, that is, such that $Z_\nu$ is finite
for every $k$-cover $Z_0 \oplus \dots \oplus Z_{k-1} \in \Ccal$. However, by the infinite pigeonhole principle, 
$Z_\nu$ must be infinite for at least one $\nu < k$.
Choosing $\alpha$ to be in $t^{<\omega}$ instead of $\omega^{<\omega}$
ensures that all elements added to~$\vec{F}$ will have to be $R_i$-transitive
simultaneously for each~$i < |\alpha|$, as the elements are taken from $dom(\Ccal)$
and therefore are greater than the threshold $\alpha(i)$ for each $i < |\alpha|$.
A \emph{part} of a condition $c = (\alpha, \vec{F}, \Ccal)$ is a pair $\tuple{c, \nu}$,
where $\nu < parts(\Ccal)$. For the simplicity of notation, we may identify a part $\tuple{c, \nu}$
of a condition with the part $\nu$ of the corresponding cover class $\Ccal$. It must however be clear
that a part depends on the condition~$c$.

We start with a few basic lemmas reflecting the combinatorics described in 
the subsection~\ref{subsect:combi-em}.
They are directly adapted from the basic properties of an Erd\H{o}s-Moser condition
proven in~\cite{Patey2015Degrees}.
The first lemma states that each element of the finite transitive tournaments $\vec{F}$ behaves
uniformly with respect to the elements of the reservoir, that is, is beaten by every element
of the reservoir or beats all of them.

\begin{lemma}\label{lem:em-comp-reduc-uniform-behaviour}
For every condition~$c = (\alpha, \vec{F}, \Ccal)$,
every $Z_0 \oplus \dots \oplus Z_{k-1} \in \Ccal$, 
every part $\nu$ of $\Ccal$, every $i < |\alpha|$ and every $x \in F_\nu \setminus [0, \alpha(i))$, 
either $\{x\} \to_{R_i} Z_\nu$ or $Z_\nu \to_{R_i} \{x\}$.
\end{lemma}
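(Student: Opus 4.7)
The lemma says that each element of the already-committed transitive part $T := F_\nu \setminus [0,\alpha(i))$ is on a uniform $R_i$-side of the whole reservoir part $Z_\nu$. The natural approach is to exploit directly the two structural clauses (b) and (c) of a condition: clause (b) makes $T$, and each one-point extension of $T$ by a $y\in Z_\nu$, $R_i$-transitive (hence linearly $R_i$-ordered), and clause (c) confines $Z_\nu$ to a minimal $R_i$-interval of $T$, so that no element of $T$ can sit between $Z_\nu$ and itself in the $R_i$-order.

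More concretely, I would first dispose of the trivial case $Z_\nu=\emptyset$, for which the statement is vacuous, and then pick a witness $y\in Z_\nu$. By clause (b), $T\cup\{y\}$ is $R_i$-transitive, so in particular $T$ is $R_i$-transitive and $R_i$ induces a linear order on $T\cup\{y\}$. By clause (c), $Z_\nu$ lies inside some minimal $R_i$-interval $(a,b)$ of $T$ with $a,b\in T$ and $R_i(a,b)$; so $R_i(a,y)$ and $R_i(y,b)$ for every $y\in Z_\nu$.

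Now fix any $x\in T$. Because $(a,b)$ is a \emph{minimal} interval of $T$, no element of $T$ lies strictly $R_i$-between $a$ and $b$, so in the $R_i$-linear order on $T$ we must have either $x=a$ or $R_i(x,a)$, or else $x=b$ or $R_i(b,x)$. In the first case, $R_i$-transitivity of $T\cup\{y\}$ combined with $R_i(a,y)$ gives $R_i(x,y)$ for every $y\in Z_\nu$, so $\{x\}\to_{R_i} Z_\nu$; the second case is symmetric and yields $Z_\nu\to_{R_i}\{x\}$. The only mild bookkeeping point, which I would handle uniformly, is the degenerate situation where $Z_\nu$ is $R_i$-below or $R_i$-above all of $T$ (so that one endpoint $a$ or $b$ is missing from $T$); in that case every $x\in T$ falls on one side automatically and the same case analysis goes through. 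No deep obstacle is expected; the proof is a direct unpacking of the forcing clauses against the definition of a minimal interval.
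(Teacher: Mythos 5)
Your proof is correct and follows essentially the same route as the paper's: invoke clause (c) to place $Z_\nu$ inside a minimal $R_i$-interval of $T = F_\nu \setminus [0,\alpha(i))$, use minimality to conclude that $x$ must sit on one side of that interval, and then close with $R_i$-transitivity of $T$ (or $T\cup\{y\}$, as you more carefully note, appealing to clause (b)). The paper handles your "degenerate situation" simply by allowing the interval endpoints $u,v$ to be $\pm\infty$ from the outset rather than by a separate case, which is the only cosmetic difference.
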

\begin{proof}
By property (c) of the condition~$c$, there exists a minimal $R_i$-interval
$(u, v)$ of $F_\nu \setminus [0, \alpha(i))$ containing $Z_\nu$.
Here, $u$ and $v$ may be respectively $-\infty$ and $+\infty$.
By definition of an interval, $\{u\} \to_{R_i} Z_\nu \to_{R_i} \{v\}$.
By definition of a minimal interval, $R_i(x, u)$ or $R_i(v, x)$ holds.
Suppose the former holds. By transitivity of $F_\nu \setminus [0, \alpha(i))$,
for every $y \in Z_\nu$, $R_i(x, y)$ holds, since both $R_i(x, u)$ and~$R_i(u, y)$ hold. 
Therefore $\{x\} \to_{R_i} Z_\nu$. In the latter case, by symmetry, $Z_\nu \to_{R_i} \{x\}$.
\end{proof}

The second lemma is the core of the combinatorics of the Erd\H{o}s-Moser theorem. It provides
sufficient properties to obtain a valid extension of a condition. Properties (i) and (ii)
are simply the definition of an extension. Properties (iii) and (iv) help to propagate
properties (b) and (c) from a condition to its extension. We shall see empirically that 
properties (iii) and (iv) are simpler to check than (b) and (c), 
as the former properties match exactly the way we add elements to our finite tournaments $\vec{F}$. 
Therefore, ensuring that these properties
are satisfied usually consists of checking that we followed the standard process of adding elements
to~$\vec{F}$. 

\begin{lemma}\label{lem:em-comp-reduc-sufficient-cond-ext}
Fix a condition~$c = (\alpha, \vec{F}, \Ccal)$ where $\Ccal$ is a $k$-cover class of~$[t, +\infty)$. 
Let $E_0, \dots, E_{m-1}$ be finite sets, $\Dcal$ be a non-empty $\Pi^{0,C}_1$ $m$-cover class of $[t', +\infty)$
for some~$t' \geq t$ and $f : m \to k$ be a function such that for each~$i < |\alpha|$ and $\nu < m$,
\begin{itemize}
	\item[(iii)] $E_\nu$ is $R_i$-transitive
	\item[(iv)] $V_\nu \to_{R_i} E_\nu$ or $E_\nu \to_{R_i} V_\nu$ for each $V_0 \oplus \dots \oplus V_{m-1} \in \Dcal$
\end{itemize}
Set $H_\nu = F_{f(\nu)} \cup E_\nu$ for each $\nu < m$.
If properties (i) and (ii) of an extension are satisfied for~$d = (\alpha, \vec{H}, \Dcal)$ with witness $f$,
then~$d$ is a valid condition extending~$c$.
\end{lemma}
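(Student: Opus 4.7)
The task is to check clauses (a), (b), (c) of the definition of a condition for $d = (\alpha, \vec{H}, \Dcal)$, since the hypotheses already supply clauses (i) and (ii) of the extension relation. Clause (a) is immediate: $\Dcal$ is given as a non-empty $\Pi^{0,C}_1$ $m$-cover class of $[t', +\infty)$, and $\alpha \in t^{<\omega} \subseteq (t')^{<\omega}$ since $t' \geq t$. The real content is propagating (b) and (c) from $c$ to $d$ using hypotheses (iii) and (iv).

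For clause (b), I would first exploit the Mathias-extension part of (i): since $(E_\nu, dom(\Dcal))$ Mathias extends $(F_{f(\nu)}, dom(\Ccal))$, one has $F_{f(\nu)} \subseteq E_\nu$ and hence $H_\nu = E_\nu$. Then (iii) directly yields that $H_\nu \setminus [0, \alpha(i)) = E_\nu \setminus [0, \alpha(i))$ is $R_i$-transitive for every $i < |\alpha|$. Given any $V_0 \oplus \dots \oplus V_{m-1} \in \Dcal$ and any $x \in V_\nu$, hypothesis (iv) places $\{x\}$ uniformly either $R_i$-above or $R_i$-below the whole of $E_\nu$, and adjoining such a uniformly placed singleton to an $R_i$-transitive set preserves $R_i$-transitivity, giving clause (b).

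Clause (c) carries the only substantive work. Fixing $V_0 \oplus \dots \oplus V_{m-1} \in \Dcal$ and $i < |\alpha|$, I would split into the two cases provided by (iv). If $V_\nu \to_{R_i} E_\nu$, then every element of $V_\nu$ lies $R_i$-below every element of $E_\nu \setminus [0, \alpha(i))$, so the ``interval'' with lower endpoint $-\infty$ and upper endpoint the $R_i$-minimum of $E_\nu \setminus [0, \alpha(i))$ (well-defined since this finite set is linearly ordered by $R_i$) is a minimal $R_i$-interval of $E_\nu \setminus [0, \alpha(i))$ containing $V_\nu$. The symmetric case $E_\nu \to_{R_i} V_\nu$ uses the $R_i$-maximum as the lower endpoint, and the degenerate cases where $V_\nu$ or $E_\nu \setminus [0, \alpha(i))$ is empty are trivial. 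I do not anticipate any serious obstacle: the lemma is essentially a careful packaging of the combinatorial picture for Erd\H{o}s-Moser conditions described just before the statement, and the only point requiring attention is the passage from the set-wise uniformity in (iv) to the element-wise consequences used throughout.
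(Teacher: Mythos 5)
Your proof rests on a misreading of the hypotheses that voids most of the argument. Property (i) of an extension is stated \emph{for $d = (\alpha, \vec{H}, \Dcal)$ with witness $f$}, so what it asserts is that $(H_\nu, dom(\Dcal))$ Mathias extends $(F_{f(\nu)}, dom(\Ccal))$ --- not that $(E_\nu, dom(\Dcal))$ does. Since $H_\nu = F_{f(\nu)} \cup E_\nu$, the inclusion $F_{f(\nu)} \subseteq H_\nu$ holds trivially and tells you nothing about $E_\nu$. In the intended use of the lemma, $E_\nu$ is a block of \emph{new} elements drawn from the reservoir $dom(\Ccal) = [t, +\infty)$, so $F_{f(\nu)}$ and $E_\nu$ are disjoint and satisfy $F_{f(\nu)} < E_\nu$, which is exactly what the Mathias-extension clause for $H_\nu$ gives you (via $H_\nu \setminus F_{f(\nu)} \subset dom(\Ccal)$). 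Your conclusion that $F_{f(\nu)} \subseteq E_\nu$, hence $H_\nu = E_\nu$, is therefore false in general.

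This collapses the real work of the lemma. For clause (b), once $H_\nu$ genuinely contains elements of $F_{f(\nu)}$ not in $E_\nu$, you must verify no-3-cycle for triples that mix the two. The paper's proof handles this by a case split: when a triple touches $F_{f(\nu)} \setminus [0, \alpha(i))$ one uses property (b) of the condition $c$ and Lemma~\ref{lem:em-comp-reduc-uniform-behaviour} (uniform behaviour of old elements w.r.t.\ the reservoir, from clause (c) of $c$), together with property (ii) to place $E_\nu \cup \{x\}$ inside some $Z_{f(\nu)}$; your proof addresses only the case where all three elements lie in $E_\nu \cup \{x\}$. For clause (c), the required minimal $R_i$-interval of $H_\nu \setminus [0, \alpha(i))$ is not of the form $(-\infty, \min E_\nu)$ or $(\max E_\nu, +\infty)$; one endpoint must come from the original minimal interval $(u,v)$ of $F_{f(\nu)} \setminus [0, \alpha(i))$ containing $Z_{f(\nu)}$ (hence $V_\nu$), and the other from the $R_i$-extremum of $E_\nu$, yielding $(y,v)$ or $(u,x)$ as in the paper; your intervals with an infinite endpoint need not even be intervals of $H_\nu$ once $F_{f(\nu)} \setminus [0,\alpha(i))$ is nonempty. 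The fix is to drop the claim $H_\nu = E_\nu$, invoke property (ii) to locate $E_\nu$ and $V_\nu$ inside a common part $Z_{f(\nu)}$ of a cover in $\Ccal$, and then run the two-case argument against the original structure of $(F_{f(\nu)}, Z_{f(\nu)})$ rather than against $E_\nu$ alone.
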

\begin{proof}
All we need is to check properties (b) and (c) for~$d$ in the definition of a condition.
We prove property (b). Fix an $i < |\alpha|$, some part $\nu$ of $\Dcal$, and an $x \in V_\nu$
for some $V_0 \oplus \dots \oplus V_{m-1} \in \Dcal$. In order to prove that 
$(F_{f(\nu)} \cup E_\nu) \setminus [0, \alpha(i)) \cup \{x\}$
is $R_i$-transitive, it is sufficient to check that the set contains no 3-cycle.
Fix three elements $u < v < w \in (F_{f(\nu)} \cup E_\nu) \setminus [0, \alpha(i)) \cup \{x\}$.
\begin{itemize}
	\item Case 1: $\{u, v, w\} \cap F_{f(\nu)} \setminus [0, \alpha(i)) \neq \emptyset$. 
	Then $u \in F_{f(\nu)} \setminus [0, \alpha(i))$ as $F_{f(\nu)} < E_\nu < \{x\}$ and $u < v < w$.
	By property (ii), there is some $Z_0 \oplus \dots \oplus Z_{k-1} \in \Ccal$ such that $E_\nu \cup \{x\} \subseteq Z_{f(\nu)}$.
	If $v \in F_{f(\nu)}$, then by property (b) of the condition~$c$ on~$Z_{f(\nu)}$, $\{u, v, w\}$ is $R_i$-transitive.
	If $v \not \in F$, then by Lemma~\ref{lem:em-comp-reduc-uniform-behaviour}, $\{u\} \to_{R_i} Z_{f(\nu)}$
	or $Z_{f(\nu)} \to_{R_i} \{u\}$, so $\{u, v, w\}$ is $R_i$-transitive since~$v, w \in Z_{f(\nu)}$.

	\item Case 2: $\{u, v, w\} \cap  F_{f(\nu)} \setminus [0, \alpha(i)) = \emptyset$. 
	Then at least $u, v \in E_\nu$ because $E_\nu < \{x\}$.
	If $w \in E_\nu$ then $\{u, v, w\}$ is $R_i$-transitive by $R_i$-transitivity of $E_\nu$.
	In the other case, $w = x \in V_\nu$. As $E_\nu \to_{R_i} V_\nu$ or $V_\nu \to_{R_i} E_\nu$,
	$\{u, v\} \to_{R_i} \{w\}$ or $\{w\} \to_{R_i} \{u, v\}$ and $\{u, v, w\}$ is $R_i$-transitive.
\end{itemize}

We now prove property (c) for $d$. Fix some $V_0 \oplus \dots \oplus V_{m-1} \in \Dcal$, 
some part $\nu$ of~$\Dcal$ and some $i < |\alpha|$.
By property (ii), there is some~$Z_0 \oplus \dots \oplus Z_{k-1} \in \Ccal$ such that $E_\nu \cup V_\nu \subseteq Z_{f(\nu)}$.
By property (c) of the condition~$c$, $Z_{f(\nu)}$ (and so $V_\nu$) is included in a minimal $R_i$-interval $(u, v)$ of 
$F_{f(\nu)} \setminus [0, \alpha(i))$.
Here again, $u$ and $v$ may be respectively $-\infty$ and $+\infty$. 
By assumption, either $E_\nu \to_{R_i} V_\nu$ or $V_\nu \to_{R_i} E_\nu$. As $E_\nu$ is a finite $R_i$-transitive set,
it has a minimal and a maximal element, say~$x$ and~$y$. If $E_\nu \to_{R_i} V_\nu$
then $V_\nu$ is included in the $R_i$-interval $(y, v)$.
Symmetrically, if $V_\nu \to_{R_i} E_\nu$ then 
$V_\nu$ is included in the $R_i$-interval $(u, x)$.
To prove minimality for the first case, assume that some $w$ is in the interval $(y, v)$.
Then $w \not \in F_{f(\nu)} \setminus [0, \alpha(i))$ by minimality of the interval $(u, v)$ with respect to 
$F_{f(\nu)} \setminus [0, \alpha(i))$, and $w \not \in E_\nu$ by maximality of~$y$.
Minimality for the second case holds by symmetry.
\end{proof}

Now we have settled the necessary technical lemmas, we start proving
lemmas which will be directly involved in the construction of the transitive subtournament.
The following simple progress lemma states that we can always find an extension of a condition
in which we increase both the finite approximations corresponding to the acceptable parts 
and the number of tournaments for which we are transitive simultaneously.
Moreover, this extension can be found uniformly.

\begin{lemma}[Progress]\label{lem:em-comp-reduc-ext}
For every condition~$c = (\alpha, \vec{F}, \Ccal)$ and every $s \in \omega$,
there exists an extension $d = (\beta, \vec{E}, \Dcal)$ such that $|\beta| \geq s$ and
$|E_\nu| \geq s$ for every acceptable part $\nu$ of~$\Dcal$.
Furthermore, such an extension can be found $C'$-effectively, uniformly in~$c$ and~$s$.
\end{lemma}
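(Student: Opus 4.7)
The plan is to argue by induction on $s \in \omega$. The base case $s = 0$ is immediate: take $d = c$. For the inductive step, I combine two basic operations applied to the inductive hypothesis condition. The first (\emph{extension}) extends $\beta$ by one index: given any condition $c' = (\alpha', \vec{F}', \Ccal')$ with $dom(\Ccal') = [t', +\infty)$, pick $b \geq t' - 1$ with $b > \max F'_\nu$ for every part $\nu$, restrict $\Ccal'$ to $[b+2, +\infty)$ to obtain a non-empty $\Pi^{0,C}_1$ cover class $\Ccal''$, and set $\alpha'' = \alpha' \concat (b+1)$. Clauses (b) and (c) at the new index are vacuous since $F'_\nu \setminus [0, b+1) = \emptyset$, and the identity function witnesses the refinement.

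The second operation (\emph{growth}) grows one specified acceptable part $\nu$ of $c'$ by a single element. Using $C'$, find the least $x \in dom(\Ccal')$ such that some $Z \in \Ccal'$ has $x \in Z_\nu$ with $Z_\nu$ infinite. Define $\Dcal'$ as the class of refinements of covers $Z \in \Ccal'^{[\nu, \{x\}]}$ to domain $[x+1, +\infty)$, where part $\nu$ is split into $2^{|\alpha'|}$ sub-parts indexed by patterns $p \in 2^{|\alpha'|}$: an element $y \in Z_\nu \cap (x, +\infty)$ is placed in sub-part $(\nu, p)$ iff $R_i(y, x) \Leftrightarrow p(i) = 0$ for each $i < |\alpha'|$; all other parts are retained (restricted to $(x, +\infty)$). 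Set $f((\nu, p)) = \nu$ and $f(\mu) = \mu$ for $\mu \neq \nu$, and $E'_{(\nu, p)} = F'_\nu \cup \{x\}$, $E'_\mu = F'_\mu$ otherwise. Validity of $(\alpha', \vec{E}', \Dcal')$ as an extension of $c'$ follows from Lemma~\ref{lem:em-comp-reduc-sufficient-cond-ext}: the added set $\{x\}$ is trivially $R_i$-transitive (clause (iii)), and the pattern split ensures $\{x\}$ is uniformly related to each sub-reservoir $V_{(\nu, p)}$ for every $R_i$ (clause (iv)); property (i) follows from $x \in dom(\Ccal')$ together with $x < x+1 = \min dom(\Dcal')$, and property (ii) from the construction.

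For the inductive step from $s$ to $s+1$, apply the induction hypothesis to $c$ and $s$ to obtain $d' = (\beta', \vec{E}', \Dcal')$. If $|\beta'| \leq s$, apply the extension operation to bring $|\beta|$ up to $s+1$. Then enumerate the currently acceptable parts $\nu_1, \ldots, \nu_m$ of the resulting class and apply the growth operation to each in sequence. Since the growth operation preserves every part other than the one being processed (up to restricting reservoirs), each $\nu_j$ remains a valid part at its turn. Acceptability is preserved under inverse refinement, so any $\mu$ that is acceptable in the final $\Dcal$ descends from an acceptable part at every intermediate stage, in particular from some $\nu_j$ at whose processing step $\mu$'s finite approximation grew from size $\geq s$ (by the hypothesis) to size $\geq s+1$. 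The whole construction is $C'$-uniform because the predicate ``part $\nu$ is acceptable'' is $C'$-decidable and all refinements are $C$-computable from $x$.

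The main technical obstacle will be verifying clause (ii) of the extension in the growth operation, i.e., that $\Dcal'$ $f$-refines $\Ccal'^{[f(\mu), E'_\mu \setminus F'_{f(\mu)}]}$ for each part $\mu$ of $\Dcal'$. For sub-parts $(\nu, p)$ this target equals $\Ccal'^{[\nu, \{x\}]}$, which is refined by construction; for other parts $\mu \neq \nu$ this target equals $\Ccal'$, which is refined since $\Ccal'^{[\nu, \{x\}]} \subseteq \Ccal'$. A secondary point is ensuring that the acceptability tracking in the combining step is robust across multiple iterations of the growth operation, which again reduces to the preservation of acceptability under inverse refinement.
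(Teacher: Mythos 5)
Your decomposition into an ``extension'' step (lengthening $\beta$) and a ``growth'' step (forking a part on a new element via the $2^{|\alpha|}$ pattern partition, then invoking Lemma~\ref{lem:em-comp-reduc-sufficient-cond-ext}) matches the structure of the paper's proof. However, there is a genuine effectiveness gap that your argument does not survive.

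You claim that ``the predicate `part $\nu$ is acceptable' is $C'$-decidable,'' and your growth operation is phrased as ``using $C'$, find the least $x \in \operatorname{dom}(\Ccal')$ such that some $Z \in \Ccal'$ has $x \in Z_\nu$ with $Z_\nu$ infinite.'' Neither of these is $C'$-effective. Acceptability of part $\nu$ asks for the existence of a member $Z$ of a $\Pi^{0,C}_1$ class with $Z_\nu$ infinite; this is a $\Sigma^1_1$-shaped condition, not $\Delta^{0,C}_2$. Concretely, one can build a $\Pi^0_1$ class $\Ccal$ of $2$-covers of $\omega$ (for instance, requiring $l \leq 2k$ for all $k < l$ both in $Z_0$) in which, for every $n$, some member has $|Z_0| \geq n$, yet no member has $Z_0$ infinite; so acceptability cannot be reduced to a $\Pi^{0,C}_2$ approximation, let alone decided by $C'$. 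Your plan of first enumerating ``the currently acceptable parts $\nu_1,\dots,\nu_m$'' and then growing only those therefore cannot be carried out with the oracle you are allowed.

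The paper's proof avoids this entirely by never trying to recognize acceptability. For \emph{each} part $\nu$ of $\Ccal$ (acceptable or not), it asks the $C'$-decidable question of whether the $\Pi^{0,C}_1$ subclass $\Dcal = \{Z \in \Ccal : Z_\nu = \emptyset\}$ is non-empty. If it is, it passes to $\Dcal$, thereby making part $\nu$ empty (hence non-acceptable) without touching the other parts. If it is empty, then by compactness every $Z \in \Ccal$ has $Z_\nu \neq \emptyset$, so one can $C'$-search for some $x$ with $\Ccal^{[\nu, x]}$ non-empty (a $\Pi^{0,C}_1$ non-emptiness check for each candidate $x$), and grow part $\nu$ by that $x$ with the usual $2^{|\alpha|}$-pattern fork. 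The lemma's conclusion concerns only the acceptable parts of the \emph{resulting} $\Dcal$, so it is harmless to grow some parts that later turn out not to be acceptable: either they are emptied at this step, or their reservoir still supports one more element. To repair your proof, replace the acceptability test and the ``$Z_\nu$ infinite'' search with this unconditional per-part alternative; the rest of your verification of clauses (i)--(iv) for the growth operation is fine.
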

\begin{proof}
Fix a condition $c = (\alpha, \vec{F}, \Ccal)$.
First note that for every $\beta \succeq \alpha$ such that $\beta(i) > max(F_\nu : \nu < parts(\Ccal))$
whenever $|\alpha| \leq i < |\beta|$, $(\beta, \vec{F}, \Ccal)$ is a condition extending~$c$.
Therefore it suffices to prove that for every such condition~$c$ and every part $\nu$ of $\Ccal$,
we can $C'$-effectively find a condition~$d = (\alpha, \vec{H}, \Dcal)$ refining~$c$
with witness~$f : parts(\Dcal) \to parts(\Ccal)$ such that $f$ forks only parts refining part $\nu$ of $\Ccal$,
and either every such part $\mu$ of $\Dcal$ is empty or $|H_\mu| > |F_\nu|$.
Iterating the process finitely many times enables us to conclude.

Fix some part $\nu$ of $\Ccal$ and let~$\Dcal$ be the collection of $Z_0 \oplus \dots \oplus Z_{k-1} \in \Ccal$
such that $Z_\nu = \emptyset$. We can $C'$-decide whether or not $\Dcal$ is empty.
If $\Dcal$ is non-empty, then $(\alpha, \vec{F}, \Dcal)$ is a valid extension of~$c$
with the identity function as witness and such that part $\nu$ of $\Dcal$ is empty.
If $\Dcal$ is empty, we can $C'$-computably find some $Z_0 \oplus \dots \oplus Z_{k-1} \in \Ccal$
and pick some~$x \in Z_\nu$.
Consider the $C$-computable $2^{|\alpha|}$-partition $(X_\rho : \rho \in 2^{|\alpha|})$ of $\omega$ defined by
$$
X_\rho = \{ y \in \omega : (\forall i < |\alpha|)[R_i(y, x) \leftrightarrow \rho(i) = 1] \} 
$$
Let $\tilde{\Dcal}$ be the cover class refining $\Ccal^{[\nu, x]}$ such that
part $\nu$ of $\tilde{\Dcal}$ has $2^{|\alpha|}$ forks induced by the
$2^{|\alpha|}$-partition~$\vec{X}$. Define $\vec{H}$ by
$H_\mu = F_\mu$ if $\mu$ refines a part different from $\nu$,
and $H_\mu = F_\nu \cup \{x\}$ if $\mu$ refines part $\nu$ of~$\Ccal$.
The forking according to~$\vec{X}$ ensures that property (iv) of Lemma~\ref{lem:em-comp-reduc-sufficient-cond-ext} holds.
By Lemma~\ref{lem:em-comp-reduc-sufficient-cond-ext}, $d = (\alpha, \vec{H}, \tilde{\Dcal})$ is a valid extension of~$c$.
\end{proof}

\subsection{The strategy}

Thanks to Lemma~\ref{lem:em-comp-reduc-ext}, we can define an infinite, $C'$-computable
decreasing sequence of conditions $(\varepsilon, \emptyset, \{\omega\}) \geq c_0 \geq c_1 \geq \dots$
such that for each~$s \in \omega$, 
\begin{itemize}
	\item[1.] $|\alpha_s| \geq s$.
	\item[2.] $|F_{s, \nu}| \geq s$ for each acceptable part~$\nu$ of~$\Ccal_s$
\end{itemize}
where $c_s = (\alpha_s, \vec{F}_s, \Ccal_s)$.
As already noticed, if some acceptable part $\mu$ of $\Ccal_{s+1}$ refines some part $\nu$ of~$\Ccal_s$,
part $\nu$ of~$\Ccal_s$ is also acceptable.
Therefore, the set of acceptable parts forms an infinite, finitely branching $C'$-computable tree~$\Tcal$.
Let $P$ be any infinite path through~$\Tcal$. 
The set $H(P) = (\bigcup_s F_{s, P(s)})$ is infinite,
and $H(P) \setminus [0, \alpha_{i+1}(i))$ is $R_i$-transitive for each $i \in \omega$.

Our goal is to build a $C'$-computable function dominating every function computed
by $H(P)$ for at least one path $P$ trough~$\Tcal$. However, it requires too much
computational power to distinguish acceptable parts from non-acceptable ones,
and even some acceptable part may have only finitely many extensions. Therefore,
we will dominate the functions computed by~$H(P)$ for \emph{every} path $P$ trough~$\Tcal$.
 
At a finite stage, a condition contains finitely many parts, each one representing
the construction of a transitive subtournament.
As in the construction of a cohesive set, it suffices to check one by one whether 
there exists an extension of our subtournaments which will
make terminate a given functional at a given input.
In the next subsection, we develop the framework necessary to decide such a termination
at a finite stage.

\subsection{Forcing relation}

As a condition $c = (\alpha, \vec{F}, \Ccal)$ corresponds to the construction of multiple
subtournaments $F_0, F_1, \dots$ at the same time, the forcing relation will depend on which
subtournament we are considering. In other words, the forcing relation depends on the part $\nu$ of~$\Ccal$
we focus on.

\begin{definition}\label{def:em-comp-reduc-forcing-relation}
Fix a condition $c = (\alpha, \vec{F}, \Ccal)$, a part~$\nu$ of~$\Ccal$ and two integers~$e$, $x$.
\begin{itemize}
	\item[1.] $c \Vdash_\nu \Phi_e^{G \oplus C}(x) \uparrow$ if $\Phi_e^{(F_\nu \cup F_1) \oplus C}(x) \uparrow$
	for all $Z_0 \oplus \dots \oplus Z_{k-1} \in \Ccal$ and all subsets $F_1 \subseteq Z_\nu$
	such that $F_1$ is $R_i$-transitive simultaneously for each $i < |\alpha|$.
	\item[2.] $c \Vdash_\nu \Phi_e^{G \oplus C}(x) \downarrow$ if $\Phi_e^{F_\nu \oplus C}(x) \downarrow$.
\end{itemize}
\end{definition}

The forcing relations defined above satisfy the usual forcing properties.
In particular, let $c_0 \geq c_1 \geq \dots$ be an infinite decreasing
sequence of conditions. This sequence induces an infinite, finitely branching tree of acceptable parts~$\Tcal$.
Let~$P$ be an infinite path trough~$\Tcal$. If 
$c_s \Vdash_{P(s)} \Phi_e^{G \oplus C}(x) \uparrow$ (respectively $c_s \Vdash_{P(s)} \Phi_e^{G \oplus C}(x) \downarrow$)
at some stage~$s$, then $\Phi_e^{H(P) \oplus C}(x) \uparrow$ (respectively $\Phi_e^{H(P) \oplus C}(x) \downarrow$).

Another important feature of this forcing relation is that we can decide $C'$-uniformly in its parameters
whether there is an extension forcing~$\Phi^{G \oplus C}_e(x)$ to halt or to diverge. 
Deciding this relation with little computational power is useful because our $C'$-computable dominating function will
need to decide termination $\Gamma^{G \oplus C}(x)$ to check whether it has to dominate the value 
outputted by $\Gamma^{G \oplus C}(x)$.

\begin{lemma}\label{lem:em-comp-reduc-force-dense}
For every condition~$c = (\alpha, \vec{F}, \Ccal)$ and every pair of integers $e, x \in \omega$,
there exists an extension~$d = (\alpha, \vec{H}, \Dcal)$ such that for each part~$\nu$ of~$\Dcal$
$$
d \Vdash_\nu \Phi_e^{G \oplus C}(x) \uparrow \hspace{10pt} \vee \hspace{10pt} d \Vdash_\nu \Phi_e^{G \oplus C}(x) \downarrow
$$
Furthermore, such an extension can be found $C'$-effectively, uniformly in~$c$, $e$ and~$x$.
\end{lemma}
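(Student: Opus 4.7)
My plan is to process the parts of $\Ccal$ one at a time, following the same pattern as the progress lemma (Lemma~\ref{lem:em-comp-reduc-ext}), with the single element $x$ added there replaced here by a whole finite transitive set $F_1$ that witnesses halting. For each part $\mu$ of the current cover class (starting with the parts of $\Ccal$), I will ask the following \emph{forcing question}:

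\smallskip
\emph{Does there exist a finite set $F_1$ such that (a) $F_1$ is $R_i$-transitive for every $i < |\alpha|$, (b) $\Phi_e^{(F_\mu \cup F_1) \oplus C}(x) \downarrow$, and (c) the restricted cover class $\Ccal^{[\mu, F_1]}$ is non-empty?}
\smallskip

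In the \emph{positive case} I locate such an $F_1$ and incorporate it into $F_\mu$. To preserve validity of the condition, part $\mu$ is forked into $2^{|\alpha|}$ sub-parts indexed by types $\tau : |\alpha| \to 2$: the $\tau$-sub-part keeps those reservoir elements $y$ satisfying $\{y\} \to_{R_i} F_1$ when $\tau(i) = 0$ and $F_1 \to_{R_i} \{y\}$ when $\tau(i) = 1$, and its finite transitive set becomes $F_\mu \cup F_1$. By design, clauses (iii) and (iv) of Lemma~\ref{lem:em-comp-reduc-sufficient-cond-ext} are met, so this is a valid extension of $c$, and each new sub-part forces $\Phi_e^{G \oplus C}(x) \downarrow$ by Definition~\ref{def:em-comp-reduc-forcing-relation}(2). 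In the \emph{negative case}, no such $F_1$ exists, which is the very definition of $c \Vdash_\mu \Phi_e^{G \oplus C}(x) \uparrow$, and I leave $\mu$ untouched.

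Iterating over the finitely many parts produces the desired extension $d = (\alpha, \vec{H}, \Dcal)$. The iteration is coherent because both $\Vdash \downarrow$ (witnessed on $F_\mu$ alone, so preserved by any further extension) and $\Vdash \uparrow$ (a universal statement that only becomes easier to satisfy as the cover class shrinks) persist under refinement of the condition, so no earlier decision is invalidated when a later part is processed.

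The \textbf{main obstacle} is verifying the $C'$-effectivity of the case distinction. For each fixed candidate $F_1$, the three clauses are $C'$-decidable: (a) is $\Delta^0_1$, (b) is $\Sigma^{0,C}_1$, and (c) is $\Pi^{0,C}_1$ because non-emptiness of a $\Pi^{0,C}_1$ class of $k$-covers is equivalent to infiniteness of the associated $C$-computable binary tree, and at each level the existence of a string is a bounded search. A $C'$-search over finite candidates $F_1$ thus uniformly enumerates positive witnesses; the algorithm is arranged so that failure to find one in the search triggers the negative branch at no cost, producing a uniformly $C'$-computable map $(c, e, x) \mapsto d$ as required.
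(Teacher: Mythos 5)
Your case analysis is correct as far as the definitions of the forcing relation go, but the proposal has two genuine gaps, both traceable to the decision to search for a \emph{single} witness~$F_1$ rather than the ``witness scheme'' over partitions that the paper uses.

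\textbf{Gap 1: The forcing question is not $C'$-decidable.} Your question asks whether there is a finite $F_1$ satisfying a conjunction of (a) a $\Delta^{0,C}_1$ predicate, (b) a $\Sigma^{0,C}_1$ predicate, and (c) the $\Pi^{0,C}_1$ predicate ``$\Ccal^{[\mu, F_1]} \neq \emptyset$''. For each \emph{fixed} $F_1$ this is $C'$-decidable, as you say, but the existential quantifier over $F_1$ then places the question in $\Sigma^{0,C'}_1 = \Sigma^{0,C}_2$, which is strictly beyond $C'$. The phrase ``failure to find one in the search triggers the negative branch at no cost'' hides the problem: an unbounded $C'$-search over candidates never returns ``no'' in finite time, and there is no compactness argument available to bound the search, because enlarging $F_1$ imposes a $\Pi^{0,C}_1$ constraint (c) whose truth value you cannot settle from below. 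The paper sidesteps this by asking Q2, which is a universal statement over the $\Pi^{0,C}_1$ class $\Ccal$ and over $2^{|\alpha|}$-partitions of each $Z_\nu$; failure of Q2 is then non-emptiness of a $\Pi^{0,C}_1$ class of ``bad'' covers-with-partitions, so Q2 itself is equivalent to emptiness of a $\Pi^{0,C}_1$ class and hence genuinely $C'$-decidable, with a compactness-produced finite witness $E$ in the positive case.

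\textbf{Gap 2: The $2^{|\alpha|}$-type forking does not yield a valid cover class.} You model the positive case on Lemma~\ref{lem:em-comp-reduc-ext}, where a \emph{single} element $x$ is added and every reservoir element has a well-defined type in $2^{|\alpha|}$ relative to $x$, so the $X_\rho$'s genuinely partition the domain. With a whole transitive set $F_1$, a reservoir element $y$ may lie \emph{between} two consecutive elements of $F_1$ under some $R_i$, so it satisfies neither $\{y\} \to_{R_i} F_1$ nor $F_1 \to_{R_i} \{y\}$ and is assigned no type. These discarded elements must still be covered by the new covers in $\Dcal$, and there is no guarantee that the remaining parts pick them up; so $\Dcal$ may fail to contain any legitimate cover of $[t', +\infty)$, making the ``extension'' ill-formed. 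The paper's construction does not fix $F_1$ first: it partitions $dom(\Ccal)$ by which $2^{|\alpha|}$-partition of the finite witness set $E$ each element \emph{induces} (finitely many classes, genuinely partitioning the domain), and then chooses the transitive $F_1 \subseteq E_\rho$ separately for each such class, so that clause (iv) of Lemma~\ref{lem:em-comp-reduc-sufficient-cond-ext} holds with the appropriate $F_1$ on each fork. This ``choose the witness per induced partition'' step is precisely what your proposal is missing, and it is what simultaneously repairs both the complexity of the question and the non-emptiness of $\Dcal$.
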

\begin{proof}
Given a condition~$c$ and two integers $e, x \in \omega$,
let $I_{e,x}(c)$ be the set of parts $\nu$ of~$c$
such that $c \not \Vdash_\nu \Phi_e^{G \oplus C}(x) \downarrow$ and $c \not \Vdash_\nu \Phi_e^{G \oplus C}(x) \uparrow$.
Note that $I_{e,x}(c)$ is $C'$-computable uniformly in~$c$, $e$ and~$x$.
It suffices to prove that given such a condition~$c$ and a part~$\nu \in I_{e,x}(c)$, one can $C'$-effectively
find an extension~$d$ with witness $f$ such that $f(I_{e,x}(d)) \subseteq I_{e,x}(c) \setminus \{\nu\}$.
Applying iteratively the operation enables us to conclude.

Fix a condition~$c = (\alpha, \vec{F}, \Ccal)$ where $\Ccal$ is a $k$-cover class, and fix some part~$\nu \in I_{e,x}(c)$.
The strategy is the following: either we can fork part~$\nu$ of $\Ccal$ into enough parts so that we 
force~$\Phi_e^{G \oplus C}(x)$ to diverge
on each forked part, or we can find an extension forcing $\Phi_e^{G \oplus C}(x)$ to converge on part~$\nu$ without forking.
Hence, we ask the following question.

\smallskip
{\itshape
Q2: Is it true that for every $k$-cover~$Z_0 \oplus \dots \oplus Z_{k-1} \in \Ccal$,
for every~$2^{|\alpha|}$-partition $\bigcup_{\rho \in 2^\alpha} X_\rho = Z_\nu$,
there is some~$\rho \in 2^{|\alpha|}$ and some finite set~$F_1$ which is $R_i$-transitive
for each~$i < |\alpha|$ simultaneously, and such that~$\Phi_e^{(F_\nu \cup F_1) \oplus C}(x) \downarrow$?
}
\smallskip

If the answer is no, then by forking the part~$\nu$ of~$\Ccal$ into $2^{|\alpha|}$ parts,
we will be able to force~$\Phi_e^{G \oplus C}(x)$ to diverge.
Let~$m = k+2^{|\alpha|}-1$ and define the function~$f : m \to k$ by $f(\mu) = \mu$ 
if $\mu < k$ and $f(\mu) = \nu$ otherwise.
Let~$\Dcal$ be the collection of all~$m$-covers $V_0 \oplus \dots \oplus V_{m-1}$ which $f$-refine
some $Z_0 \oplus \dots \oplus Z_{k-1} \in \Ccal$ and such that for every part $\mu$ of $\Dcal$ 
$f$-refining part~$\nu$ of $\Ccal$ and every subset $F_1 \subseteq V_\mu$
which is $R_i$-transitive simultaneously for each~$i < |\alpha|$, $\Phi_e^{F_\nu \cup F_1}(x) \uparrow$.
Note that~$\Dcal$ is a $\Pi^{0,C}_1$ $m$-cover class $f$-refining $\Ccal$. Moreover~$\Dcal$
is non-empty since the answer to~{\itshape Q2} is no.
Let~$\vec{E}$ be defined by $E_\mu = F_\mu$ if $\mu < k$
and $E_\mu = F_\nu$ otherwise. The condition~$d = (\alpha, \vec{E}, \Dcal)$ extends~$c$
with witness~$f$. For every part~$\mu$ of $\Dcal$ $f$-refining part $\nu$ of $\Ccal$, $d \Vdash_\mu \Phi_e^{G \oplus C}(x) \uparrow$,
therefore $f(I_{e,x}(d)) \subseteq I_{e,x}(c) \setminus \{\nu\}$.

Suppose now that the answer is yes. By compactness, we can $C'$-effectively find a finite set~$E \subseteq Z_\nu$
for some~$Z_0 \oplus \dots \oplus Z_{k-1} \in \Ccal$ such that for every $2^{|\alpha|}$-partition $(E_\rho : \rho \in 2^{|\alpha|})$
of $E$, there is some $\rho \in 2^{|\alpha|}$ and some set $F_1 \subseteq E_\rho$ which is $R_i$-transitive
simultaneously for each $i < |\alpha|$ and such that $\Phi_e^{(F_\nu \cup F_1) \oplus C}(x) \downarrow$.
There are finitely many $2^{|\alpha|}$-partitions of $E$. Let~$n$ be the number of such partitions. 
These partitions induce a finite $C$-computable $n$-partition of~$dom(\Ccal)$
defined for each $(E_\rho : \rho \in 2^{|\alpha|})$ by
$$
X_{\tuple{E_\rho : \rho \in 2^{|\alpha|}}} = \left\{ y \in dom(\Ccal) : (\forall i < |\alpha|) 
	\cond{
	\mbox{ if } \rho(i) = 0 \mbox{ then } E_\rho \to_{R_i} \{y\} \\ 
	\mbox{ if } \rho(i) = 1 \mbox{ then } \{y\} \to_{R_i} E_\rho} \right\}
$$

Let~$\tilde{\Dcal}$ be the $\Pi^{0,C}_1$ $(k+n-1)$-cover class refining $\Ccal^{[\nu, E]}$
and such that part~$\nu$ of~$\Ccal^{[\nu, E]}$ is refined accordingly to the above partition of~$dom(\Ccal)$.
Let~$f : k+n-1 \to k$ be the refining function witnessing it. 
Define~$\vec{H}$ as follows. For every part~$\mu$ of $\Dcal$, refining part~$\nu$ of $\Ccal^{[\nu, E]}$,
by definition of~$\tilde{\Dcal}$, there is some~$2^{|\alpha|}$-partition $\tuple{E_\rho : \rho \in 2^{|\alpha|}}$ of~$E$
such that for every $V_0 \oplus \dots V_{k+n-2} \in \tilde{\Dcal}$, $V_\mu \subseteq X_{\tuple{E_\rho : \rho \in 2^{|\alpha|}}}$.
By choice of~$E$, there exists some set $F_1 \subseteq E_\rho$ for some~$\rho \in 2^{|\alpha|}$
which is $R_i$-transitive simultaneously for each $i < |\alpha|$ and such that
$\Phi_e^{(F_\nu \cup F_1) \oplus C}(x) \downarrow$.
This set $F_1$ can be found $C'$-effectively. Set $H_\mu = F_\nu \cup F_1$.
For every part~$\mu$ of $\tilde{\Dcal}$ which refines some part~$\xi$ of $\Ccal^{[\nu, E]}$ different from~$\nu$,
set~$H_\mu = F_\xi$.
By Lemma~\ref{lem:em-comp-reduc-sufficient-cond-ext}, $d = (\alpha, \vec{H}, \tilde{\Dcal})$ is a valid condition
extending~$c$. Moreover, for every part $\mu$ of~$\tilde{\Dcal}$ refining part~$\nu$ of $\Ccal$,
$d \Vdash_\mu \Phi_e^{G \oplus C}(x) \downarrow$. Therefore $f(I_{e,x}(d)) \subseteq I_{e,x}(c) \setminus \{\nu\}$.
\end{proof}

\subsection{Construction}

We are now ready to construct our infinite transitive subtournament~$H(P)$ together
with a $C'$-computable function~$f$ dominating every~$H(P) \oplus C$-computable function.
Thanks to Lemma~\ref{lem:em-comp-reduc-ext} and Lemma~\ref{lem:em-comp-reduc-force-dense}, we can $C'$-compute an infinite
descending sequence of conditions $(\epsilon, \emptyset, 1^{<\omega}) \geq c_0 \geq c_1 \geq \dots$
such that at each stage $s \in \omega$,
\begin{itemize}
	\item[1.] $|\alpha_s| \geq s$
	\item[2.] $|F_{s, \nu}| \geq s$ for each acceptable part~$\nu$ of~$\Ccal_s$
	\item[3.] $c_s \Vdash_\nu \Phi_e^{G \oplus C}(x) \downarrow$ or $c_s \Vdash_\nu \Phi_e^{G \oplus C}(x) \uparrow$
	for each part~$\nu$ of~$\Ccal_s$ if $\tuple{e, x} = s$
\end{itemize}
where $c_s = (\alpha_s, \vec{F}_s, \Ccal_s)$.
Property 1 ensures that the resulting set with be eventually transitive
for every tournament in~$\vec{R}$. Property~2 makes the subtournaments infinite.
Last, property 3 enables us to $C'$-decide at a finite stage whether a functional terminates on a given
input, with the transitive subtournament as an oracle.

Define the $C'$-computable function $f : \omega \to \omega$ as follows:
On input~$x$, the function~$f$ looks at all stages~$s$ such that $s = \tuple{e,x}$ for
some $e \leq x$. For each such stage~$s$, and each part~$\nu$ in~$\Ccal_s$,
the function $C'$-decides whether $c_s \Vdash_\nu \Phi^{G \oplus C}_e(x) \downarrow$
or $c_s \Vdash_\nu \Phi^{G \oplus C}_e(x) \uparrow$. 
In the first case, $f$ computes the value $\Phi^{F_{s, \nu} \oplus C}_e(x)$.
Having done all that, $f$ returns a value greater than the maximum of the computed values.

Fix any infinite path~$P$ trough the infinite tree $\Tcal$ of the acceptable parts induced
by the infinite descending sequence of conditions. 
We claim that $f$ dominates every function computed by~$H(P) \oplus C$.
Fix any Turing index $e \in \omega$ such that $\Phi_e^{H(P) \oplus C}$ is total.
Consider any input~$x \geq e$ and the corresponding stage $s = \tuple{e,x}$. 
As $\Phi_e^{H(P) \oplus C}$ is total, $c_s \not \Vdash_{P(s)} \Phi_e^{G \oplus C}(x) \uparrow$,
hence by property 3, $c_s \Vdash_{P(s)} \Phi_e^{G \oplus C}(x) \downarrow$.
By construction, $f(x)$ computes the value of $\Phi_e^{F_{s,P(s)} \oplus C}(x)$ and returns
a greater value. As $F_{s,P(s)}$ is an initial segment of $H(P)$, 
$\Phi_e^{F_{s,P(s)} \oplus C}(x) = \Phi_e^{H(P) \oplus C}(x)$
and therefore $f(x) > \Phi_e^{H(P) \oplus C}(x)$.
This completes the proof of~$\amt \not \leq_c \emo$.

We identify a $k$-cover $Z_0 \cup \dots \cup Z_{k-1}$ of some set $X$ with the $k$-fold join of its parts

\section{The domination framework}

The actual proof of Theorem~\ref{thm:amt-comp-reduc-em-coh} is slightly stronger than its statement
as it creates a degree~$\dbf$ bounding~$\emo$ together with a computable instance~$X$ of~$\amt$
such that $\dbf$ bounds no solution to~$X$. Therefore, having solutions to multiple tournaments in parallel
is not enough to compute a solution to~$X$.
One may however ask whether \emph{sequential} applications of~$\emo$
(that is, defining a tournament such that every transitive subtournament will be used to define another tournament
and so on) is enough to compute a solution to~$X$.

Answering negatively this question requires to diagonalize against solutions $Y_0$ to computable instances of~$\emo$,
but also against solutions $Y_1$ to $Y_0$-computable instances of~$\emo$ and so on. The difficulty comes
from the fact that diagonalizations happen at finite stages, at which we have only access to a finite approximation
of~$Y_0$, and so to a finite part of the $Y_0$-computable instances of~$\emo$.
Thankfully, we only need a finite piece of an $\emo$-instance to diagonalize against its solutions.

In this section, we develop a framework for building an $\omega$-structure~$\Mcal$ satisfing some principle $\Psf$
such that every function in $\Mcal$ is dominated by a single $\emptyset'$-computable function.
Since by definition, the first-order part of an $\omega$-structure is the set of standard natural numbers,
$\omega$-structures are characterized by their second-order part. An $\omega$-structure
satisfies $\rca$ if and only if its second-order part is a Turing ideal, i.e.,
a set of reals $\Ical$ closed under the effective join and the Turing reduction.

The whole construction will be done by iterating uniformly and $\emptyset'$-effectively
the forcing constructions presented in the previous sections.
We will not directly deal with the concrete forcing notion used for constructing solutions
to $\emo$-instances. Instead, we will manipulate an abstract partial order of forcing conditions.
Abstracting the construction has several advantages:
\begin{itemize}
	\item[1.] It enables the reader to focus on the operations which are the essence of the construction.
	The reader will not be distracted by the implementation subtleties of $\emo$ which are not insightful 
	to understand the overall structure.
	\item[2.] The construction is more modular. We will be able to implement modules for~$\emo$
	and~$\wkl$ independently, and combine them in section~\ref{sect:separating-amt-combined} 
	to obtain a proof that~$\emo \wedge \allowbreak \wkl$ does not imply~$\amt$, 
	and this without changing the main construction.
	This also enable reverse mathematicians to prove that other principles do not imply~$\amt$ without having to reprove
	the administrative aspects of the construction.
\end{itemize}

We shall illustrate our definitions with the case of~$\coh$ in order to give a better
intuition about the abstract operators we will define. As explained in section~\ref{sect:emo-computable-reducibility}, 
the separation of~$\coh$ from~$\amt$
is already a consequence of the separation of~$\emo$ from~$\amt$. Therefore implementing the framework
with~$\coh$ is only for illustration purposes.

\subsection{Support}

The first step consists of defining the abstract partial order which will represent the partial
order of forcing conditions. We start with an analysis of the common aspects of the different
forcing notions encountered until yet, in order to extract their essence and define the abstract operators.
In the following, we shall employ \emph{stage} to denote a temporal step in the construction.
An \emph{iteration} is a spatial step representing progress in the construction of the Turing ideal.
Multiple iterations are handled at a single stage. 
\smallskip

\emph{Parts of a condition}. When constructing cohesive sets for~$\coh$
or transitive subtournaments for~$\emo$, we have been working in both cases with \emph{conditions}
representing parallel Mathias conditions. We shall therefore associate to our abstract notion of condition
a notion of~\emph{part} representing one of the solutions we are building.
A single abstract condition will have multiple \emph{parts} representing the various candidate
solutions constructed in parallel for the same instance.

For example, in the forcing notion for~$\coh$, a condition~$c = (F_\nu : \nu \in 2^n)$
can be seen as $2^n$ parallel Mathias conditions $(F_\nu, R_\nu)$ where $R_0, R_1, \dots$
is the universal instance of $\coh$. In this setting, the parts of~$c$ are the pairs $\tuple{c, \nu}$ for each~$\nu \in 2^n$.
One may be tempted to get rid of the notion of condition and directly deal with its parts
since in $\coh$, a condition is only the tuple of its parts.
However, in the forcing notion $c = (\vec{F}, \Ccal)$ for~$\emo$, the parts are interdependent
since adding element to some~$F_\nu$ will remove inconsistent covers from~$\Ccal$
and therefore may restrict the reservoirs of the other parts.
\smallskip

\emph{Satisfaction}. As explained, a part represents the construction of one solution, whereas a condition
denotes multiple solutions in parallel. We can formalize this intuition by defining
a \emph{satisfaction function} which, given a part of a condition, returns the collection of the sets
satisfying it. For example, a set~$G$ satisfies part~$\nu$ of the $\coh$ condition~$c = (F_\nu : \nu \in 2^n)$
if it satisfies the Mathias condition $(F_\nu, R_\nu \setminus [0, max(F)])$,
in other words, if $F_\nu \subseteq G$ and $G \setminus F_\nu \subseteq R_\nu \setminus [0, max(F_\nu)]$.
\smallskip

\begin{figure}[htbp]
\begin{center}
\scalebox{0.9}{
\begin{tikzpicture}[x=1.5cm, y=1cm, thick,
		node/.style={inner sep=3pt, minimum size=1.5em, font=\large{#1}},
		stage/.style={node,rotate=25,opacity=0.7},
		label/.style={node,rotate=-3.5,opacity=0.7}
]

  \node[node] (c0) at (0, 10) {$c_0$};
	\node[node] (c1) at (2, 9.8) {$c_1$};
	\node[node] (c2) at (4, 9.6) {$c_2$};
	\node[node] (c3) at (6, 9.4) {$c_3$};
	
	\node[node] (d00) at (1, 8) {$d_{0,0}$};
	\node[node] (d01) at (2.5, 8.6) {$d_{0,1}$};
	\node[node] (d10) at (3, 7.8) {$d_{1,0}$};
	\node[node] (d11) at (4.5, 8.4) {$d_{1,1}$};
	\node[node] (d20) at (5, 7.2) {$d_{2,0}$};
	\node[node] (d21) at (5.8, 7.7) {$d_{2,1}$};
	\node[node] (d22) at (6.5, 8.2) {$d_{2,2}$};

	\node[node] (e00) at (5, 5.7) {$e_{0,0}$};
	\node[node] (e01) at (5.8, 6.2) {$e_{0,1}$};
	\node[node] (e02) at (6.5, 6.7) {$e_{0,2}$};

	\node[label] (t1) at (8, 9.2) {\small First iteration};
	\node[label] (t2) at (8.5, 8) {\small Second iteration};
	\node[label] (t3) at (8.5, 6.5) {\small Third iteration};

	\node[stage] (l1) at (0.5,10.7) {\small Stage 0};
	\node[stage] (l2) at (2.5,10.5) {\small Stage 1};
	\node[stage] (l3) at (4.5,10.3) {\small Stage 2};
	\node[stage] (l4) at (6.5,10.1) {\small Stage 3};
	
	\draw[dotted] (c0) -- (c1) -- (c2) -- (c3) -- (7,9.3);

	\draw (c1) -- (d00);
	\draw (c1) -- (d01);

	\draw (c2) -- (d10);
	\draw (c2) -- (d11);
	\draw[dotted] (d00) -- (d10);
	\draw[dotted] (d01) -- (d11);

	\draw (c3) -- (d20);
	\draw (c3) -- (d21);
	\draw (c3) -- (d22);
	\draw[dotted] (d10) -- (d20);
	\draw[dotted] (d10) -- (d21);
	\draw[dotted] (d11) -- (d22);

	\draw (d20) -- (e00);
	\draw (d21) -- (e01);
	\draw (d22) -- (e02);

	\draw[dotted] (d22) -- (7.5, 8.1);
	\draw[dotted] (d21) -- (6.8, 7.6);
	\draw[dotted] (d20) -- (6, 7.1);
	\draw[dotted] (e00) -- (6,5.6);
	\draw[dotted] (e01) -- (6.8, 6.1);
	\draw[dotted] (e02) -- (7.5, 6.6);

\end{tikzpicture}
}
\end{center}
\caption{Example of construction of a Turing ideal by an iterative forcing
	in which conditions may have multiple parts. The nodes are the conditions,
	the dotted edges are condition extensions and the plain
	edges are the parts of the conditions.} 
\end{figure}

\emph{Initial condition}. 
In a standard (i.e.\ non-iterative) forcing, we build an infinite decreasing sequence
of conditions, starting from one initial condition~$c_0$. In $\coh$, this initial condition is~$(\emptyset, \varepsilon)$,
where~$\varepsilon$ is the empty string. Since~$R_\varepsilon = \omega$, this coincides with the standard
initial Mathias condition~$(\emptyset, \omega)$.
In an iterative forcing, we add progressively new iterations by starting a new decreasing sequence of conditions
below each part of the parent condition. Since $\coh$ admits a universal instance, there is no need to choose
which instance we want to solve at each iteration. However, in the case of~$\emo$, we will take a new $\emo$-instance
each time, so that the resulting Turing ideal is the second-order part of an $\omega$-model satisfying $\emo$.
Therefore, an $\emo$-condition is in fact a condition~$(\vec{F}, \Ccal, R)$ where $R$ is an instance of~$\emo$.
The chosen instance of~$\emo$ will be decided at the initialization of a new iteration and will be preserved
by condition extension. The choice of the instance depends only on the iteration level. Therefore we can
define an initialization function which, given some integer, returns the initial condition together with the chosen instance.
\smallskip

\emph{Parameters}.
The difficulty of the iterative forcing comes from the fact that an instance of the principle~$\Psf$
may depend on the previous iterations. During the construction, the partial approximations
of the previous iterations become more and more precise, enabling the instance at the next iteration to be
defined on a larger domain. In the definition of our abstract partial order, we will use a formal parameter $D$
which will represent the join of the constructed solutions in the previous iterations.
For example, in the formal definition of the partial order for~$\coh$, we will say that
some condition $d = (E_\mu : \mu \in 2^m)$ extends another condition $c = (F_\nu : \nu \in 2^n)$ if~$m \geq n$,
and $(E_\mu, R^D_\mu \setminus [0, max(E_\mu)])$ 
Mathias extends $(F_\nu, R^D_\nu \setminus [0,max(F_\nu)])$ for each~$\nu \preceq \mu$. 
This syntactic constraints has to be understood as $(E_\mu, R^X_\mu \setminus [0, max(E_\mu)])$ 
Mathias extends $(F_\nu, R^X_\nu \setminus [0,max(F_\nu)])$
for every set $X = X_0 \oplus \dots \oplus X_{n-1}$ such that~$X_i$ satisfies the ancestor of~$d$ in the iteration axis
at the $i$th level. In the case of $\coh$, only a finite initial segment of $X$ is needed to witness
the extension.

We are now ready to define the notion of module support. 

\begin{definition}[Module support]
A \emph{Module support} is a tuple~$\tuple{\Pb, \Ub, \parop, \iniop, \satop}$ where
\begin{itemize}
	\item[(1)] $(\Pb, \leq_\Pb)$ is a partial order. The set~$\Pb$ has to be thought of as the set of forcing conditions.
	Therefore, the elements of~$\Pb$ will be called \emph{conditions}.
	\item[(2)] $\Ub$ is a set of \emph{parts}. 
	The notion of part is due to the fact that most of our forcing conditions represent multiple
	objects built in parallel.
	\item[(3)] $\parop : \Pb \to \Pcal_{fin}(\Ub)$ is a computable function which, given some condition~$c \in \Pb$,
	gives the finite set of parts associated to~$c$.
	\item[(4)] $\iniop : \Nb \to \Pb$ is a computable function which, given some integer $n$ representing
	the iteration level, provides the initial condition of the forcing at the $n$th iteration.
	\item[(5)] $\satop : \Ub \to \Pcal(2^\omega)$ is a function which, given some part~$\nu$ of some condition~$c$,
	returns the collections of sets satisfying it.
\end{itemize}
Furthermore, a module support is required to satisfy the following property:
\begin{itemize}
	\item[(a)] If~$d \leq_\Pb c$ for some~$c, d \in \Pb$, then there is a function~$f : \parop(d) \to \parop(c)$
such that $\satop(\nu) \subseteq \satop(f(\nu))$ for each~$\nu \in \parop(d)$. We may write it $d \leq_f c$
	and say that $f$ is the \emph{refinement function witnessing $d \leq_\Pb c$}.
\end{itemize}
\end{definition}

Given two conditions~$c, d \in \Pb$ such that~$d \leq_f c$, 
we say that $f$ \emph{forks} part~$\nu$ of $c$ if $|f^{-1}(\nu)| \geq 2$. 
This forking notion will be useful in the definition of a module.
Let us illustrate the notion of module support by defining one for~$\coh$.
\smallskip

\emph{Module support for~$\coh$}.
Define the tuple  $\tuple{\Pb, \Ub, \parop, \iniop, \satop}$ as follows:
$\Pb$ is the collection of all conditions~$(F_\nu : \nu \in 2^n)$ where~$F_\nu$ is a finite set of integers.
Given some~$d = (E_\mu : \mu \in 2^m)$ and $c = (F_\nu : \nu \in 2^n)$, $d \leq_\Pb c$ if~$m \geq n$,
and $(E_\mu, R^D_\mu \setminus [0, max(E_\mu)])$ Mathias extends 
$(F_\nu, R^D_\nu \setminus [0, max(F_\nu)])$ for each~$\nu \preceq \mu$.
Let~$\Ub$ be the set of all pairs $\tuple{(F_\nu : \nu \in 2^n), \nu}$ where~$\nu \in 2^n$.
Given some condition~$c = (F_\nu : \nu \in 2^n) \in \Pb$, $\parop(c) = \{\tuple{c, \nu} : \nu \in 2^n\}$.
For every level~$n \geq 0$, $\iniop(n) = (\emptyset, \varepsilon)$.
Define $\satop(\tuple{(F_\nu : \nu \in 2^n), \nu})$ to be the collection of all sets satisfying the Mathias condition
$(F_\nu, R_\nu \setminus [0, max(F_\nu)])$.

We now check that property (a) holds.
Let~$d = (E_\mu : \mu \in 2^m)$ and $c = (F_\nu : \nu \in 2^n)$ be such that $d \leq_\Pb c$. In particular, $m \geq n$.
Define~$f : \Ub \to \Ub$ by $f(\tuple{d, \mu}) = \tuple{c, \mu \uh n}$. 
We claim that~$f$ is a refinement function witnessing~$d \leq_\Pb c$.
$\satop(\tuple{d,\mu})$ is the collection of sets satisfying the Mathias condition 
$\tilde{d} = (E_\mu, R_\mu \setminus [0, max(E_\mu)))$
and $\satop(\tuple{c, \mu \uh n})$ is the collection of sets 
satisfying $\tilde{c} = (F_{\mu \uh n}, R_{\mu \uh n} \setminus [0, max(F_{\mu \uh n})))$.
Since $\mu \uh n \preceq \mu$, $\tilde{d}$ Mathias extends $\tilde{c}$ by definition of~$\satop$.
Considering the Mathias conditions, every set satisfying $\tilde{d}$ satisfies $\tilde{c}$,
so $\satop(\tuple{d,\mu}) \subseteq \satop(f(\tuple{d,\mu}))$. Therefore the property (a) of a module support is satisfied.

\subsection{Modules}

We previously defined the abstract structure we shall use as a support of the construction.
The next step consists of enriching this structure with a few more operators which will enable us to decide $\Sigma^0_1$ properties
over the constructed sets. The success or failure in forcing some property will depend on the parts of a condition. Note that at a finite stage, we handle a finite tree of conditions. We can therefore cover all cases by asking finitely many questions.
Let us go back to the $\coh$ example, and more precisely how we decided $\Sigma^0_1$ properties over it.

\emph{Iteration 1}. At the first iteration, we would like to decide whether
the $\Sigma^{0,G}_1$ formula
\[
\psi(G) = (\exists s, m)(\Phi^{G}_{e,s}(n) \downarrow = m)
\]
will hold, where $G$ is a formal parameter denoting the constructed set.
Furthermore, we want to collect the value of $\Phi^G_e(n)$ if it halts.
The formula $\psi(G)$ can be seen as a \emph{query}, whose \emph{answers} are
either $\tuple{\no}$ if~$\psi(G)$ does not hold, or a tuple $\tuple{\yes, s, m}$
such that $\Phi^G_{e,s}(n) \downarrow = m$ if~$\psi(G)$ holds.
Given some condition~$c = (F_\nu : \nu \in 2^n)$, we can ask on each part~$\tuple{c,\nu}$
whether the formula $\varphi(G)$ will hold or not, by \emph{boxing} the query $\psi(G)$
into a $\Sigma^0_1$ query $\phi$ without the formal parameter~$G$, such that $\phi$ holds if and only if
we can find an extension $d$ of~$c$ forcing $\psi(G)$ on the parts of~$d$
refining part $\nu$ of~$c$. Concretely, we can define~$\phi$ as follows:
\[
\phi = (\exists F_1 \subseteq R_\nu \setminus [0, max(F_\nu)])\psi(F_\nu \cup F_1)
\]
This query can be $\emptyset'$-decided. If the formula $\phi$ holds, 
we can effectively find some answer to~$\phi$, that is, 
a tuple $\tuple{\yes, F_1, s, m}$ such that $F_1 \subseteq R_\nu \setminus [0, max(F_\nu)]$
and $\Phi^{F_\nu \cup F_1}_{e,s}(n) \downarrow = m$. The extension $d$ obtained by adding $F_1$ to $F_\nu$
forces $\psi(G)$ to hold for every set $G$ satisfying the part~$\tuple{d,\nu}$ of the condition~$d$.
The answer to~$\psi(G)$ is obtained by forgetting the set~$F_1$ from the answer to~$\phi$.
On the other hand, if the formula $\phi$ does not hold, the answer is~$\tuple{\no}$
and $c$ already forces $\psi(G)$ not to hold.
\smallskip

\emph{Iteration 2}. At the second iteration, we work with conditions $c_1 = (E_\mu : \mu \in 2^m)$
which are below some part~$\nu$ of some condition~$c_0 = (F_\nu : \nu \in 2^n)$ living at the first iteration level.
We want to decide $\Sigma^{0,G_0, G_1}_1$ formulas, where $G_0$ and $G_1$
are formal parameters denoting the sets contructed at the first iteration and at the second iteration, respectively.
We basically want to answer queries of the form
$$
\varphi(G_0, G_1) = (\exists s, m)(\Phi^{G_0 \oplus G_1}_{e,s}(n) \downarrow = m)
$$
We will ask this question on each part of~$c_1$.
By the same boxing process as before applied relative to~$c_1$, we obtain a formula $\psi(G_0)$ getting
rid of the formal parameter~$G_1$, and defined by
\[
\psi(G_0) = (\exists E_1 \subseteq R^{G_0}_\mu \setminus [0, max(E_\mu)])\varphi(G_0, E_\mu \cup E_1)
\]
The formula $\psi(G_0)$ is a now a query at the first iteration level. We can apply
another boxing to~$\psi(G_0)$ relative to~$c_0$ to obtain a $\Sigma^0_1$
formula $\phi$ without any formal parameter.
\[
\phi = (\exists F_1 \subseteq R_\nu \setminus [0, max(F_\nu)])\psi(F_\nu \cup F_1)
\]
This formula can again be $\emptyset'$-decided. If it holds,
an answer $a = \tuple{\yes, F_1, E_1, s, m}$ can be given. At the first iteration level,
we unbox the answer $a$ to obtain a tuple $b = \tuple{\yes, E_1, s, m}$ and an extension $d_0$ of~$c_0$.
The extension $d_0$ forces the tuple~$b$ to answer the query $\psi(G_0)$
and is obtained by adding $F_1$ to~$F_\nu$.
At the second iteration level, we unbox again the answer $b$ to obtain a tuple~$\tuple{\yes, s,m}$
and an extension~$d_1$ to~$c_1$, forcing~$\tuple{\yes, s,m}$ to answer the query $\varphi(G_0, G_1)$.
The whole decision process is summarized in Figure~\ref{fig:boxing-sequence}.

\begin{figure}[htbp]
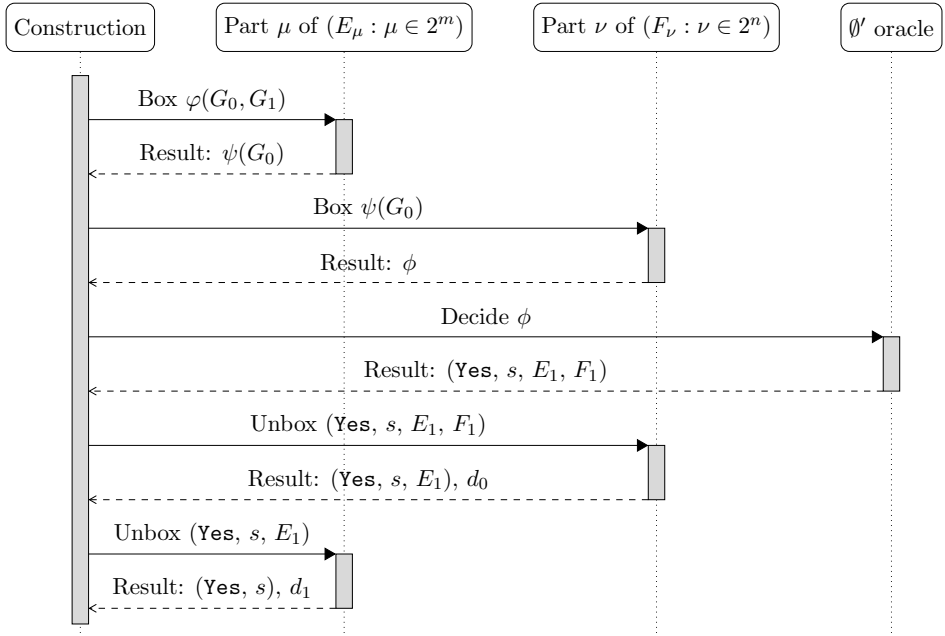

\begin{center}
\scalebox{0.8}{
	\begin{sequencediagram}
		\def\unitfactor{0.9}
		\tikzstyle{inststyle}=[rectangle, draw, anchor=west, minimum
    height=0.8cm, minimum width=1.6cm, fill=white,rounded corners]
    \newthread{cl}{Construction}{Construction}
    \newinst[1]{dpart}{Part $\mu$ of $(E_\mu : \mu \in 2^m)$}{SimControlNode}
    \newinst[1]{cpart}{Part $\nu$ of $(F_\nu : \nu \in 2^n)$}{PhysicsServer}
    \newinst[1]{oracle}{$\emptyset'$ oracle}{SenseServer}

    \begin{call}{cl}{Box $\varphi(G_0, G_1)$}{dpart}{Result: $\psi(G_0)$}
		\end{call}
		\begin{call}{cl}{Box $\psi(G_0)$}{cpart}{Result: $\phi$}
		\end{call}
		\begin{call}{cl}{Decide $\phi$}{oracle}{Result: ($\yes$, $s$, $E_1$, $F_1$)}
		\end{call}
		\begin{call}{cl}{Unbox ($\yes$, $s$, $E_1$, $F_1$)}{cpart}{Result: ($\yes$, $s$, $E_1$), $d_0$}
		\end{call}
		\begin{call}{cl}{Unbox ($\yes$, $s$, $E_1$)}{dpart}{Result: ($\yes$, $s$), $d_1$}
		\end{call}
  \end{sequencediagram}
}

\end{center}
\caption{This sequence diagram shows the boxing process of the $\Sigma^{0,G_0, G_1}_1$ formula
	$\varphi(G_0, G_1) = (\exists s)\Phi^{G_0 \oplus G_1}_{e,s}(x) \downarrow$ into
	a $\Sigma^0_1$ formula without formal parameters in order to decide it.
	The formula $\varphi(G_0, G_1)$ is boxed into a formula  
	$\psi(G_0) = (\exists E_1 \subseteq R^{G_0}_\mu \setminus [0, max(E_\mu)])\varphi(G_0, E_\mu \cup E_1)$
	which is itself boxed into $\phi = (\exists F_1 \subseteq R_\nu \setminus [0, max(F_\nu)])\psi(F_\nu \cup F_1)$.
} \label{fig:boxing-sequence}
\end{figure}

\smallskip
\emph{Progress}. We may also want to force some specific properties required by the principle~$\Psf$.
In the case of Ramsey-type principles, we need to force the set $G$ to be infinite. This can be done
with the following query for each~$k$:
\[
(\exists n)[n \in G \wedge n > k]
\]

The progress query can take various forms, depending on the considered principle. For example,
in $\wkl$, we need to force the path to be infinite by asking the following question for each~$k$:
\[
(\exists \sigma \in 2^k)[\sigma \prec G]
\]
We will therefore define some progress operator which outputs some query that the construction
will force to hold or not. We will choose the actual forcing notions so that the formula
can be forced to hold for at least one part of each condition. The parameter $k$ will not be
given to the operator, since it can be boxed into the current condition, in a monadic style.

We are now ready to define the notion of module as a module support
enriched with some boxing, unboxing and progress abstract operators.
In what follows, $\queryop[\vec{X}]$ is the set of all $\Sigma^0_1$ formulas
with $\vec{X}$ as formal parameters, and $\ansop[\vec{X}]$ is the set of their answers.

\begin{definition}[Module]
A \emph{module} is a tuple~$\tuple{\Sb, \boxop, \unboxop, \progop}$ where
\begin{itemize}
	\item[(1)] $\Sb = \tuple{\Pb, \Ub, \parop, \iniop, \satop}$ is a module support.
	\item[(2)] $\boxop : \Ub \times \queryop[D,G] \to \queryop[D]$ is a computable boxing function which,
	given some part~$\nu$ of some condition~$c \in \Pb$ and some~$\Sigma^0_1$ formula~$\varphi(D,G)$,
	outputs a~$\Sigma^0_1$ formula~$\psi(D)$.
	\item[(3)] $\unboxop : \Ub \times \ansop[D] \to \Pb \times (\Ub \to \Ub) \times (\Ub \to \ansop[D,G])$
	is a computable function which, given some part~$\nu$ of some condition~$c \in \Pb$
	and some answer~$a$ to a $\Sigma^0_1$ formula~$\psi(D)$ encoding a $\Sigma^0_1$ formula~$\varphi(D,G)$, 
	outputs a tuple $\tuple{d, f, g}$ such that 
	$d \leq_f c$ where $f$ forks only part $\nu$ of~$c$,
	and for every part~$\mu$ of~$d$ such that~$f(\mu) = \nu$, 
		and every set~$G \in \satop(\mu)$, $g(\mu)$ is an answer to~$\varphi(D,G)$.
	\item[(3)] $\progop : \Ub \to \queryop[D,G]$ is a computable function which provides a question
	forcing some progress in the solution. It usually asks 
	whether we can force the partial approximation to be defined on a larger domain.
\end{itemize}
\end{definition}

Let us go back to the $\coh$ case.
Define the $\coh$ module $\tuple{\Sb, \boxop, \unboxop, \progop}$ as follows:
$\Sb$ is the $\coh$ module support previously defined.
Given some condition~$c = (F_\nu : \nu \in 2^n)$, some~$\nu \in 2^n$ and some query~$\varphi(D, G)$,
$\boxop(\tuple{c, \nu}, \varphi)$ is the query $\psi(D)$ defined by
\[
\psi(D) = (\exists F_1 \subseteq R^D_\nu 
\setminus [0, max(F_\nu)])\varphi(D, F_\nu \cup F_1)
\]
Set~$\unboxop(\tuple{c,\nu}, \tuple{\no}) = \tuple{c, id, g}$
where $id$ is the identity function and $g(\nu) = \tuple{\no}$.
Given an answer $a = \tuple{\yes, F_1, a_1}$ to the question $\psi(D)$,
$\unboxop(\tuple{c,\nu}, a) = \tuple{d, f, g}$ where $d = (E_\mu : \mu \in 2^n)$
is an extension of~$c$ such that~$E_\nu = F_\nu \cup F_1$, and~$E_\mu = F_\mu$ whenever~$\mu \neq \nu$.
The function~$f : \Ub \to \Ub$ is defined by $f(\tuple{d,\mu}) = \tuple{c,\mu}$ for each~$\mu \in 2^n$.
The function $g : \Ub \to \ansop[D,G]$ is the constant function defined by $g(\tuple{d,\mu}) = \tuple{\yes, a}$.

We claim that $f$ is a refinement function witnessing $d \leq c$. For every~$\mu \neq \nu$,
$E_\mu = F_\mu$ so~$(E_\mu, R^D_\mu \setminus [0, max(E_\mu)])$ 
Mathias extends $(F_\mu, R^D_\mu \setminus [0, max(F_\mu)])$.
By definition of an answer, $F_1 \subseteq R^D_\nu \setminus max(F_\nu))$
so $(E_\nu, R^D_\nu \setminus [0, max(E_\nu)])$ Mathias extends $(F_\nu, R^D_\nu \setminus [0, max(F_\nu)])$.
Therefore $d \leq_\Pb c$.
Last, $\progop(\tuple{c,\nu})$ is the query
\[
\psi(D, G) = (\exists x \in G)[x > max(F_\nu)]
\]

When considering cohesiveness, we must ensure an additional kind of progress. Indeed, we must
partition the reservoir according to $(R_\sigma : \sigma \in 2^n)$
for larger and larger~$n$. We can slightly modify the forcing notion for~$\coh$
and ``hack'' this kind of progress in the $\unboxop$
operator by making it return a condition whose parts are split accordingly.
Since the separation of~$\emo$ from~$\amt$ entails the separation of~$\coh$ from~$\amt$,
we will not go into the details for fixing this progress issue.

\subsection{Construction}\label{subsect:framework-construction}

We will construct an infinite sequence of trees of conditions by stages, such that each level
corresponds to one iteration. We will add progressively more and more iterations,
so that the limit tree is of infinite depth.
In order to simplify the presentation of the construction, we need to introduce some additional
terminology.

\begin{definition}[Stage tree]
A \emph{stage tree} is a finite tree $T$ whose nodes are conditions 
and whose edges are parts of conditions. It is defined inductively as follows:
A \emph{stage tree of depth 0} is a condition. 
A \emph{stage tree of depth~$n+1$} is a tuple $\tuple{c, h}$ where
$c$ is a condition and $h$ is a function such that $h(\nu)$ is a stage tree of depth~$n$ for each~$\nu \in \parop(c)$.
\end{definition}

We consider that the stage subtree $h(\nu)$ is linked to~$c$ by an edge labelled by~$\nu$.
The \emph{root} of~$T$ is itself if~$T$ is a stage tree of depth~0. If $T = \tuple{c,h}$
then the root of $T$ is $c$.
According to our notation on trees, if $T = \tuple{c, h}$,
we write $T^{[\nu]}$ to denote $h(\nu)$.
We also write $T \uh k$ to denote the restriction of~$T$ to its stage subtree of depth $k$.
At each stage $s$ of the construction, we will end up with a stage tree of depth $s$.
The initial stage tree will be $T_0 = \iniop(0)$. There is a natural notion
of stage tree extension induced by the extension of its conditions.

\begin{definition}[Stage tree extension]
A stage tree $T_1$ of depth~$n$ \emph{extends} a stage tree $T_0$ of depth~$0$
if there is a function~$f$ such that $c_1 \leq_f T_0$ where $c_1$ is the root of~$T_1$.
We say that $f$ is a \emph{refinement tree of depth 0} and write $T_1 \leq_f T_0$.
A stage tree $T_1 = \tuple{c_1, h_1}$ of depth~$n+1$ \emph{extends} a stage tree $T_0 = \tuple{c_0, h_0}$
of depth $m+1$ if there is a function $f$ such that $c_1 \leq_f c_0$ and a function $r$ such that
$r(\nu)$ is a refinement tree of depth~$m$
such that $h_1(\nu) \leq_{r(\nu)} h_0(f(\nu))$ for each part~$\nu$ of $c_1$.
The tuple~$R = \tuple{f, r}$ is a \emph{refinement tree of depth~$m+1$}
and we write $T_1 \leq_R T_0$.
\end{definition}

Note that if $T_1 \leq_R T_0$, where $T_1$ is a stage tree of depth $n$
and $T_0$ is a stage tree of depth~$m$, then $n \geq m$.
We may also write $T_1 \leq T_0$ if there is a refinement tree~$R$ of depth~$m$
such that $T_1 \leq_R T_0$.

\begin{figure}[htbp]
\begin{center}
\scalebox{0.9}{
\begin{tikzpicture}[x=1.5cm, y=1cm, thick,
		node/.style={inner sep=3pt, minimum size=1.5em, font=\large{#1}},
		stage/.style={node,opacity=0.7},
		label/.style={node,rotate=-3.5,opacity=0.7},
		edgelabel/.style={opacity=0.7}
]

	\node[node] (c2) at (4, 9.6) {$c_0$};
	\node[node] (c3) at (6, 9.4) {$c_1$};

	\node[node] (d10) at (3, 7.8) {$d_{0,0}$};
	\node[node] (d11) at (4.5, 8.4) {$d_{0,1}$};
	\node[node] (d20) at (5, 7.2) {$d_{1,0}$};
	\node[node] (d21) at (5.8, 7.7) {$d_{1,1}$};
	\node[node] (d22) at (6.5, 8.2) {$d_{1,2}$};

	\node[node] (e00) at (5, 5.7) {$e_{1,0}$};
	\node[node] (e01) at (5.8, 6.2) {$e_{1,1}$};
	\node[node] (e02) at (6.5, 6.7) {$e_{1,2}$};

	\node[stage] (l3) at (4,10.3) {\small $T_0$};
	\node[stage] (l4) at (6,10.1) {\small $T_1$};
	
	\draw[dotted] (c2) -- (c3) node [edgelabel, midway, above] {\small $f_0$};

	\draw[very thick] (c2) -- (d10);
	\draw (c2) -- (d11);

	\draw (c3) -- (d20);
	\draw[very thick] (c3) -- (d21);
	\draw (c3) -- (d22);
	\draw[dotted] (d10) -- (d20) node [edgelabel, near end, below] {\small $f_1$};;
	\draw[dotted] (d10) -- (d21) node [edgelabel, near end, above] {\small $f_2$};;
	\draw[dotted] (d11) -- (d22) node [edgelabel, near start, above] {\small $f_3$};;

	\draw (d20) -- (e00);
	\draw[very thick] (d21) -- (e01);
	\draw (d22) -- (e02);

\end{tikzpicture}
}
\end{center}
\caption{In this example, the refinement tree $R$ whose nodes are $\{f_0, f_1, f_2, f_3\}$ witnesses 
the extension of the stage tree $T_0$ whose nodes are $\{c_0, d_{0,0}, d_{0,1}\}$ 
by the stage tree $T_1$ whose nodes are $\{c_1, d_{1,0}, d_{1,1}, d_{1,2}, e_{1,0}, e_{1,1}, e_{1,2}\}$.
The condition~$c_1$ has three parts, and $f_0$-refines the condition~$c_0$ which has two parts.
The conditions $d_{1,0}$, $d_{1,1}$ and~$d_{1,2}$ have only one part.
The path $c_1 \-- d_{1,1} \-- e_{1,1}$ trough the tree $T_1$ $R$-refines the path $c_0 \-- d_{0,0}$
through the tree $T_0$.}
\end{figure}

At each stage, we will extend the current stage tree to a stage tree of larger depth
and whose conditions force more and more properties. The resulting
sequence of stage trees $T_0 \geq T_1 \geq \dots$ can be seen as a 2-dimensional tree
with the following axes:

\begin{itemize}
	\item The \emph{stage axis} is a temporal dimension.
	Let $c_0 \geq c_1 \geq \dots$ be such that $c_s$ is the root of $T_s$ for each stage~$s$.
	As we saw in the computable non-reducibility case, the parts of this sequence forms
	an infinite, finitely branching tree. Let $P$ be any infinite path through this tree.
	More formally, $P$ is a sequence $\nu_0, \nu_1, \dots$ such that $\nu_{s+1}$ is a part of~$c_{s+1}$
	refining the part~$\nu_s$ in~$c_s$ for each~$s$. Consider now the sequence
	of stage trees~$T_0^{[\nu_0]} \geq T_1^{[\nu_1]} \geq \dots$ The sequence lives at the second iteration level,
	below the path~$P$. Its roots induce another infinite, finitely branching tree, and so on.
	Therefore, at each level, we can define an infinite, finitely branching tree of parts,
	once we have fixed the path $P$ through the tree of parts at the previous level.

	\item The \emph{iteration axis} is a spatial (or vertical) dimension corresponding to the depth.
	The notion of stage tree makes explicit the finite tree obtained when fixing a stage.
	A path through a stage tree corresponds to the choices made at each level, between the different
	parts of a condition. We did not define the notion of acceptable part in this framework.
	Therefore, the choice of the part is delegated to the module, which will have to justify that
	at least one of the parts is extensible.
\end{itemize}

\begin{definition}[Partial path]
A \emph{partial path} $\rho$ through a stage tree $T$ of depth~$n$ is defined inductively as follows:
A partial path through a stage tree $T$ of depth 0 is a part of $T$.
A partial path through a stage tree $T = \tuple{c, h}$ of depth $n+1$ is either a part of $c$,
or a sequence $\nu, \rho$ where $\nu$ is a part of $c$ 
and $\rho$ is a partial path through $h(\nu)$.
A \emph{path} through $T$ is a partial path of length~$n+1$.
\end{definition}

We denote by $P(T)$ and by $PP(T)$ the collection of paths and partial paths through $T$, respectively.
Note that a partial path has length at least 1. The notion of refinement between
partial paths is defined in the natural way. We can also extend the notation
$T^{[\rho]}$ to partial paths $\rho$ through~$T$ with the obvious meaning.
There is also a notion of satisfaction of a stage tree induced by the $\satop$ operator.

\begin{definition}[Stage tree satisfaction]
A set $G_0$ \emph{satisfies} a partial path $\nu_0$ 
through a stage tree $T$ of depth~$0$ 
if $G_0 \in \satop(\nu_0)$.
A tuple of sets $G_0, G_1, \dots, G_k$ \emph{satisfies} a partial path $\nu_0, \dots, \nu_k$ 
through a stage tree $T = \tuple{c,h}$ of depth~$n+1$ if $G_0 \in \satop(\nu_0)$
and $k = 0$, or $G_1, \dots, G_k$ satisfies the partial path $\nu_1, \dots, \nu_k$ through
the stage tree $h(\nu_0)$.
A tuple of sets $G_0, G_1, \dots, G_k$ \emph{satisfies} a stage tree $T$ of depth~$n$
if it satisfies a partial path through $T$.
\end{definition}

Again, if $G_0, \dots, G_k$ satisfies a stage tree of depth~$n$,
then $k \leq n$. The notion of satisfaction induces a forcing relation.
We say that $T$ \emph{forces} some formula $\varphi(D, G)$ below a partial path $\rho = \nu_0, \dots, \nu_k$
(written $T \Vdash_\rho \varphi(U, G)$ if for every tuple of sets $G_0, \dots, G_k$
satisfying $\nu_0, \dots, \nu_k$, $\varphi(\bigoplus_{i < k} G_i, G_k)$ holds.

We now prove a few lemmas stating that we can compose locally the abstract operators
to obtain some global behavior. The first trivial lemma shows how to increase the size of a stage tree.
This is where we use the operator $\iniop$.

\begin{lemma}[Growth lemma]\label{lem:growth-lemma}
For every stage tree $T_0$ of depth $n$ and every~$m$,
there is a stage tree $T_1$ of depth $n+1$ such that $T_1 \uh n = T_0$,
and whose leaves are~$\iniop(m)$.
Moreover, $T_1$ can be computably found uniformly in $T_0$.
\end{lemma}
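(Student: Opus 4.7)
My plan is to prove the Growth Lemma by a straightforward induction on the depth $n$ of the input stage tree $T_0$. The construction is essentially cosmetic: since $T_0$ already forms the top $n$ levels of the desired $T_1$, all that is needed is to attach a fresh copy of $\iniop(m)$ to each leaf of $T_0$. The only content of the lemma is to verify that this extension is well-defined as a stage tree and that the operation is uniformly computable.

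For the base case $n=0$, the stage tree $T_0$ is simply a condition $c$. I would define $T_1 \eqdef \tuple{c, h}$ where $h : \parop(c) \to \Pb$ is the constant function with value $\iniop(m)$; since $\iniop(m)$ is itself a stage tree of depth $0$, $T_1$ is indeed a stage tree of depth $1$. Its restriction $T_1 \uh 0$ is the root condition $c = T_0$, and its leaves are exactly the $\iniop(m)$ attached at each part of $c$. For the inductive step, suppose $T_0 = \tuple{c, h_0}$ has depth $n+1$. By the induction hypothesis, for each part $\nu \in \parop(c)$ I can computably find a stage tree $h_1(\nu)$ of depth $n+1$ such that $h_1(\nu) \uh n = h_0(\nu)$ and all leaves of $h_1(\nu)$ are $\iniop(m)$. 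Setting $T_1 \eqdef \tuple{c, h_1}$ yields a stage tree of depth $n+2$ whose top $n+1$ levels coincide with $T_0$ and whose depth-$(n+2)$ leaves are $\iniop(m)$ as required.

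For uniformity, observe that the recursion terminates in exactly $n+1$ steps, and each step invokes only the computable operators $\parop$ and $\iniop$ from the module support. Therefore the map $T_0 \mapsto T_1$ is computable, with the depth parameter read off syntactically from the input.

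I do not foresee a genuine obstacle here; the lemma is a book-keeping statement about the inductive shape of stage trees and contains no combinatorial content about the underlying principle being forced. The only subtlety worth flagging in the write-up is to be explicit that $\iniop(m)$ qualifies as a stage tree of depth $0$ (so that it may legitimately be used as a leaf), and that the recursion on depth is justified by the inductive definition of stage trees given earlier.
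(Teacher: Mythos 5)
Your proof is correct and follows essentially the same inductive construction as the paper's: induct on depth, attach $\iniop(m)$ below every part in the base case, and in the inductive step apply the hypothesis to each subtree $h_0(\nu)$ to build $h_1$.
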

\begin{proof}
The proof is done inductively on the depth of~$T_0$.
In the base case, $T_0$ is a stage tree of depth 0 and is therefore a condition~$c_0$.
Let $h$ be the function such that~$h(\nu) = \iniop(m)$ for each~$\nu \in \parop(c_0)$.
The tuple~$T_1 = \tuple{c_0, h}$ is a stage tree of depth 1 such that $T_1 \uh 0 = c_0 = T_0$.
It can be computably found uniformly in $T_0$ since $\iniop$ and $\parop$ are computable.
Suppose now that $T_0 = \tuple{c_0, h_0}$ is a stage tree of depth $n+1$. By induction hypothesis,
we can define a function $h_1$ such that for each~$\nu \in \parop(c_0)$, $h_1(\nu)$ is a stage tree of depth $n+1$
and $h_1(\nu) \uh n = h_0(\nu)$. The tuple $T_1 = \tuple{c_0, h_1}$ is a stage tree of depth $n+2$
such that $T_1 \uh n+1 = \tuple{c_0, h_0} = T_0$.
\end{proof}

We will always apply the growth lemma in the case~$m = n+1$. However, the full statement
was necessary to apply the induction hypothesis.
Note that, since $T_1 \uh n = T_0$, we have $T_1 \leq T_0$ as witnessed
by taking the refinement tree of identity functions.
The next lemma states that we can, given some stage tree $T_0$
and some query~$\varphi(D, G)$, obtain another stage tree $T_1 \leq T_0$ in which 
we have decided $\varphi(D, G)$ at every part of every condition in $T_0$.
Its proof is non-trivial since when forcing some property, we may increase the number
of branches of the stage tree. We need therefore to define some elaborate decreasing property
to prove termination of the procedure. The query lemma is assumed yet 
and will be proven in subsection~\ref{subsect:queries-detailed}.

\begin{lemma}[Query lemma]\label{lem:query-lemma}
Let $T_0$ be a stage tree of depth~$n$
and ~$q : PP(T) \to \queryop[D,G]$ be a computable function.
There is a stage tree $T_1 \leq T_0$ of depth $n$
such that every partial path $\xi$ through $T_1$ refines a partial path $\rho$ through $T_0$
for which $T_1 \Vdash_\xi q(\rho)$ or $T_1 \Vdash_\xi \neg q(\rho)$,
Moreover, $T_1$ and the function of answers $a : PP(T_1) \to \ansop[D,G]$ can be $\Delta^0_2$-found
uniformly from $T_0$.
\end{lemma}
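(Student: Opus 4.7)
The plan is to process the finitely many partial paths of $T_0$ one at a time, each time extending the current stage tree so as to decide the associated query via the $\emptyset'$ oracle through iterated boxing and unboxing. Initialize $T := T_0$. At each step, pick an unprocessed partial path $\rho = \nu_0, \dots, \nu_k$ of $T_0$ and, for every partial path $\xi = \mu_0, \dots, \mu_k$ of the current $T$ refining $\rho$, extend $T$ so as to force either $q(\rho)$ or $\neg q(\rho)$ below every refinement of $\xi$ in the new tree.

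To handle a single $\xi$, mirror the boxing sequence illustrated in Figure~\ref{fig:boxing-sequence}. Let $\varphi_k = q(\rho)$ and recursively set $\varphi_{j-1} = \boxop(\mu_{j-1}, \varphi_j)$ for $j = k, k-1, \dots, 1$; since each application of $\boxop$ strips off the innermost formal parameter, $\varphi_0$ has no formal parameter left (at the root there is no outer oracle) and can be $\emptyset'$-decided. If the answer is $\tuple{\no}$, the semantics of $\boxop$ (modelled on its concrete definitions) guarantee that no finite extension along $\xi$ can witness $q(\rho)$, so the current $T$ already forces $\neg q(\rho)$ below $\xi$ and no extension is needed. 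If the answer $a_0$ is a $\yes$ answer, iteratively unbox down the partial path: set $(d_0, f_0, g_0) = \unboxop(\mu_0, a_0)$, yielding an extension $d_0 \leq_{f_0} c_0$ of the root condition forking only at $\mu_0$; for each part $\mu'_0$ of $d_0$ with $f_0(\mu'_0) = \mu_0$, the value $g_0(\mu'_0)$ is an answer to $\varphi_1$, which we unbox against the part $\mu_1$ of the root of the subtree originally hanging below $\mu_0$, producing $(d_1^{\mu'_0}, f_1^{\mu'_0}, g_1^{\mu'_0})$, and so on down to level $k$. Glue the resulting conditions into a new stage tree using the refinement tree $R = \tuple{f_0, r_0}$ with $r_0(\mu'_0) = \tuple{f_1^{\mu'_0}, r_1^{\mu'_0}}$ produced by the level-$1$ unboxing, and so on. By the contract of $\unboxop$, for every refinement $\mu'_0, \dots, \mu'_k$ of $\xi$ in the new tree, the final answer $g_k^{\dots}(\mu'_k)$ witnesses $q(\rho)$ for every satisfying tuple, i.e.\ the new tree forces $q(\rho)$ below that partial path.

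Iterate over all partial paths $\xi$ in $T$ refining $\rho$ -- once $\xi$ has been processed, every further refinement of $\xi$ in the extended tree already has $q(\rho)$ decided below it, so no revisiting is needed -- then pass to the next unprocessed $\rho'$. Decisions made for earlier paths are preserved, because property (a) of a module support gives $\satop(\mu) \subseteq \satop(f(\mu))$ for any refining function $f$, so any formula forced below a partial path remains forced below every refinement of that partial path in any later extension. Each $\boxop$, $\unboxop$ and tree-gluing step is computable, and only the decision of the parameter-free $\Sigma^0_1$ formula $\varphi_0$ uses $\emptyset'$, so the entire construction, together with the corresponding function of answers, is $\Delta^0_2$ uniformly in $T_0$ and $q$.

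The main obstacle is the bookkeeping of the tree surgery: when unboxing forks parts at several successive levels, the new stage tree must be described as a valid extension in the sense of the stage tree extension definition by assembling the $f_j^{\dots}$ returned by the iterated unboxing into a single coherent refinement tree of the full depth, and one must check that successive extensions corresponding to distinct partial paths $\rho$ compose into a valid refinement tree from the original $T_0$. Once this combinatorial packaging is in place, the forcing claims reduce to unwinding the specifications of $\boxop$ and $\unboxop$ along $\xi$.
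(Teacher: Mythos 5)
Your high-level plan is the same as the paper's: iterate box--decide--unbox along each partial path, use property (a) of a module support to keep previously forced facts forced under further refinement, and assemble the resulting local extensions into a new stage tree. Your description of a single boxing pass (recursively stripping off the innermost oracle, deciding the resulting parameter-free $\Sigma^0_1$ formula with $\emptyset'$, then unboxing downward, gluing the returned refinement functions into a refinement tree) is essentially correct, modulo the small wrinkle that you must fold $\psi(D)$ into $\psi(D'\oplus G')$ at each level before re-boxing, exactly as the paper does in the proof of Lemma~\ref{lem:sq-one-step-forcing}.

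There is, however, a genuine gap: you never justify that the inner loop over ``partial paths $\xi$ of the current $T$ refining $\rho$'' terminates, and this is the crux of the lemma. When you process one $\xi$ refining $\rho$, the iterated unboxing forks parts at every level $0,\dots,k$ along $\xi$. Any other still-undecided $\xi'$ refining $\rho$ that shares a proper prefix with $\xi$ then acquires \emph{several} refinements in the new tree (one per combination of forked parts along the shared prefix), and each of those refinements is still undecided. Consequently the number of undecided partial paths refining $\rho$ can strictly \emph{increase} after each inner step, and the implicit ``finitely many partial paths, process them one at a time'' argument does not give termination. This is precisely the difficulty the paper singles out in the discussion preceding the proof (the hydra-game analogy with Kirby--Paris), and it is why the paper introduces the auxiliary relation $\sqsubset$, proves it well-founded (Lemma~\ref{lem:sq-well-founded}), establishes the concatenation Lemma~\ref{lem:sq-concatenation} to propagate strict $\sqsubset$-decrease through the tree surgery, shows that one decision step strictly decreases $\sqsubset$ on the subtree of undecided paths (Lemma~\ref{lem:sq-one-step-forcing}), and only then concludes by well-founded recursion, first for full paths (Lemma~\ref{lem:sq-one-level-forcing}) and then depth by depth for partial paths via trivial completion. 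Your closing paragraph names the ``bookkeeping of the tree surgery'' as the main obstacle; that packaging is in fact routine. The real obstacle is termination, and that is where almost all of the paper's work in subsection~\ref{subsect:queries-detailed} lives. Without some well-founded measure playing the role of $\sqsubset$, the argument as written does not establish that the construction halts.
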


The following domination lemma is a specialization of the query lemma
by considering queries about termination of programs.

\begin{lemma}[Domination lemma]\label{lem:domination-lemma}
For every stage tree $T_0$ of depth~$n$, there is
a stage of tree $T_1 \leq T_0$ of depth~$n$ and a finite set $U \subset \omega$ such that
for every tuple~$G_0, \dots, G_n$ satisfying $T_1$ and every $e, x, i \leq n$,
$\Phi_e^{G_0 \oplus \dots \oplus G_i}(x) \in U$ whenever $\Phi_e^{G_0 \oplus \dots \oplus G_i}(x)$ halts.
Moreover, $T_1$ and $U$ can be $\Delta^0_2$-found uniformly from $T_0$.
\end{lemma}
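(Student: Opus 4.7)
The plan is to apply the Query Lemma finitely many times, once for each pair $(e, x) \in \{0, \ldots, n\}^2$, and to accumulate the resulting halting values into $U$. The key observation is that the single $\Sigma^0_1$ query
\[
\varphi_{e,x}(D, G) \;:=\; (\exists s, m)[\Phi^{D \oplus G}_{e,s}(x) \downarrow = m]
\]
automatically handles \emph{all} levels $i \leq n$ at once: when evaluated below a partial path $\xi$ of length $i+1$, it concerns $\Phi^{G_0 \oplus \dots \oplus G_i}_e(x)$ for tuples satisfying $\xi$, because the forcing relation interprets $D$ as $G_0 \oplus \dots \oplus G_{i-1}$ and $G$ as $G_i$. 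One application of Lemma~\ref{lem:query-lemma} with $q_{e,x}(\rho) := \varphi_{e,x}$ (constant in $\rho$) therefore decides halting at every level simultaneously for the pair $(e, x)$.

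Concretely, I would enumerate $\{0, \ldots, n\}^2$ and iteratively invoke Lemma~\ref{lem:query-lemma} with $q_{e,x}$, feeding the output of each application as input to the next. Each invocation returns an extension $T'$ and an answer function $a : PP(T') \to \ansop[D,G]$; from every partial path $\xi$ whose answer has the shape $\tuple{\yes, s, m}$ I collect the value $m$ into an accumulating finite set~$U$. After processing all $(n+1)^2$ pairs, I let $T_1$ be the final stage tree and take $U$ as constructed.

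To verify the conclusion, suppose $G_0, \ldots, G_n$ satisfies $T_1$ and $\Phi^{G_0 \oplus \dots \oplus G_i}_e(x)$ halts for some $e, x, i \leq n$. The truncation $G_0, \ldots, G_i$ satisfies some partial path $\xi$ of length $i+1$ through $T_1$. At the step that treated $(e, x)$, the Query Lemma decided $\varphi_{e,x}$ at every partial path; because the computation halts, $T_1$ cannot force $\neg \varphi_{e,x}$ below $\xi$, so the recorded answer has the form $\tuple{\yes, s, m}$ witnessing an actual halting computation on the finite initial segments committed along $\xi$. Monotonicity of the use of the oracle then forces $m = \Phi^{G_0 \oplus \dots \oplus G_i}_e(x)$, so $m \in U$ as required. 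The $\Delta^0_2$-uniformity of $T_1$ and $U$ in $T_0$ follows from the $\Delta^0_2$-uniformity in Lemma~\ref{lem:query-lemma} together with the fact that we perform only finitely many applications.

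The main obstacle is ensuring that the answer function produced by the Query Lemma actually yields a concrete output value rather than merely a yes/no decision. This is precisely why I phrase the query as asking for both a halting stage~$s$ and an output~$m$: the $\Sigma^0_1$ witness then contains the desired numerical value, and the forcing commitment produced by the unboxing mechanism pins this value down uniformly for every tuple satisfying $\xi$. A secondary subtlety is that subsequent iterations refine $T_1$ further, but they can only add elements to finite approximations along $\xi$, so the halting computation recovered at the $(e,x)$-step remains valid by monotonicity and the collected~$m$ stays correct.
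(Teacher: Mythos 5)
Your proof is correct and follows essentially the same approach as the paper's: apply the Query Lemma once for each pair $(e,x) \leq n$ with the $\Sigma^0_1$ halting query $(\exists s,m)\,\Phi_{e,s}^{D\oplus G}(x)\downarrow = m$, then collect the witnessing values $m$ into the finite set $U$. The paper states this more tersely (taking $U = [0,k]$ for $k$ an upper bound on the returned answers), but your verification — that a halting computation on $G_0\oplus\dots\oplus G_i$ forces the positive branch at the partial path $\xi$ satisfied by $G_0,\dots,G_i$, and that the value is pinned down by monotonicity of the use — is exactly the argument the paper leaves implicit.
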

\begin{proof}
Apply successively the query lemma with $q(\xi) = (\exists s,m)\Phi_{e,s}^{D \oplus G}(x) \downarrow = m$
for each~$e, x \leq n$, in order to obtain the tree $T_1$ together with an upper bound $k$ to the answers to $q(\rho)$.
We claim that the set $U = [0, k]$ satisfies the desired property.
Let~$G_0, \dots, G_n$ be a tuple satisfying $T_1$, and let~$e, x, i < n$
be such that $\Phi_e^{G_0 \oplus \dots \oplus G_i}(x) \downarrow$.
By definition of satisfaction, there is some partial path $\rho$ through $T_1$
such that $G_0, \dots, G_i$ satisfies $\rho$. By the query lemma,
$T_1 \Vdash_\rho (\exists s,m)\Phi_{e,s}^{D \oplus G}(x) \downarrow = m$
or $T_1 \Vdash_\rho \neg (\exists s,m)\Phi_{e,s}^{D \oplus G}(x) \downarrow = m$.
Since $\Phi_e^{G_0 \oplus \dots \oplus G_i}(x) \downarrow$, the former holds,
and $k$ is greater than the answer to the query, so is greater than $m$.
Uniformity is inherited from the query lemma.
\end{proof}

We construct an infinite $\Delta^0_2$ sequence of finite trees of conditions $T_0 \geq T_1 \geq \dots$ as follows:
At stage 0, we start with a stage tree $T_0$ of depth 0 defined by $\iniop(0)$.
At each stage $s > 0$, assuming we have defined a stage tree $T_{s-1}$ of depth~$s-1$, act as follows:
\begin{itemize}
	\item[(S1)] \emph{Growth}: Apply the growth lemma to obtain a stage tree $T^1_s \leq T_{s-1}$
	of depth $s$.
	Intuitively, this step adds a new iteration and therefore ensures that the construction will have eventually
	infinitely many levels of iteration.

	\item[(S2)] \emph{Progress}: Apply to $T^1_s$ the query lemma with $q = \progop$ to obtain a stage
	tree $T^2_s \leq T^1_s$ such that the progress function is forced at each partial path.
	This step ensures that for every tuple $G_0, G_1, \dots$ such that $G_0, \dots, G_k$
	satisfies each $T_s$, $s \geq k$, the progress query will have been decided
	on $G_i$ infinitely many times.

	\item[(S3)] \emph{Domination}: Apply to $T^2_s$ the domination lemma to obtain
	a stage tree $T_s \leq T^2_s$ and a finite set $U$ such that for every tuple $G_0, \dots, G_s$
	satisfying $T_s$ and every $e, x, i \leq s$, if $\Phi_e^{G_0 \oplus \dots \oplus G_i}(x)$ halts,
	then its value will be in $U$. Since the whole construction is $\Delta^0_2$
	and we uniformly find such a set $U$, this step enables us to define a $\Delta^0_2$
	function which will dominate every function in the Turing ideal.
\end{itemize}

\subsection{Queries}\label{subsect:queries-detailed}

In this section, we develop the tools necessary to prove the query lemma (Lemma~\ref{lem:query-lemma}).
Given some stage tree $T_0$ and some query function $q : PP(T) \to \queryop[D, G]$, the query lemma
states that we can find a stage tree $T_1$ extending $T_0$ and which forces either $q(\rho)$
or its negation on each partial path through $T_1$ refining the partial $\rho$ through $T_0$.
The stage tree $T_0$ is finite and has therefore finitely many partial paths.
The naive algorithm would consist of taking an arbitrary partial path $\rho$ through $T_0$,
then decide $q(\rho)$ thanks to the process illustrated in Figure~\ref{fig:boxing-sequence} and extend $T_0$
into a stage tree $T_1$ which forces $q(\rho)$ or its negation on every path refining $\rho$.
One may expect to obtain the query lemma by iterating this process finitely many times.

The termination of the algorithm depends on the shape of the extension $T_1$ obtained
after deciding $q(\rho)$. We need to ensure that we made some progress so that we will
have covered all paths at some point. Let us look more closely at the construction of the extension $T_1$.
Given some query $\varphi(D, G)$ and some part~$\nu$, we call the $\unboxop(\nu, \varphi)$ operator to obtain another
query $\psi(D)$ getting rid of the forcing variable $G$. Using $\emptyset'$, we obtain an answer $a$
to the formula $\varphi(\emptyset)$ and then call $\unboxop(\nu, a)$ to obtain some extension 
forking only~$\nu$, and forcing either $\varphi(D,G)$ or its negation on every part refining~$\nu$.
This extension may therefore increase the number of parts, but ensures some progress on each of the forked parts.

If $T_0$ is a stage tree of depth 0, the termination of the process is clear.
Indeed, $T_0$ is a condition~$c_0$ and the partial paths through $T_0$ are simply the parts of $c_0$.
We end up with a stage tree $T_1$ of depth 0 corresponding to some condition~$c_1$,
on which we have decided $\varphi(D,G)$ for every part of $c_1$ refining some part~$\nu$ of~$c_0$.
Since we have not forked any other part than $\nu$, the number of undecided parts strictly decreases.
A condition has finitely many parts, so the process terminates after at most $|\parop(c_0)|$
steps.

The progress becomes much less clear if $T_0$ is a stage tree of depth 1.
When trying to decide some query on some path $\nu_0,\nu_1$ through $T_0$, we need to extend
both the root, and the conditions below each part~$\mu$ refining $\nu_0$.
The overall number of undecided paths may increase, and therefore a simple cardinality argument
is not enough to deduce termination.
Note that this algorithm has some common flavor with the \emph{hydra game}
introduced by Kirby and Paris~\cite{Kirby1982Accessible} and whose termination
is not provable in Peano arithmetic. Thankfully, our problem is much simpler
and its termination can be proven by elementary means.

\begin{figure}[htbp]
\begin{center}
\scalebox{0.9}{
\begin{tikzpicture}[x=1.5cm, y=1cm, thick,
		node/.style={inner sep=3pt, minimum size=1.5em, font=\large{#1}},
		stage/.style={node,opacity=0.7},
		label/.style={node,rotate=-3.5,opacity=0.7},
		edgelabel/.style={opacity=0.7}
]

	\node[node] (c0) at (4, 9.8) {$c_0$};
	\node[node] (d0) at (4, 8.3) {$d_0$};
	\node[node] (e00) at (3, 6.1) {$\nu_{0,0}$};
	\node[node] (e01) at (3.8, 6.6) {$\nu_{0,1}$};
	\node[node] (e02) at (4.5, 7.1) {$\nu_{0,2}$};

	\node[node] (c1) at (7.8, 9.4) {$c_1$};
	\node[node] (d10) at (6.5, 7) {$d_{1,0}$};
	\node[node] (d11) at (8.8, 8.3) {$d_{1,1}$};

	\node[node] (e10) at (5.5, 4.8) {$\mu_{1,0}$};
	\node[node] (e11) at (6.3, 5.3) {$\mu_{1,1}$};
	\node[node] (e12) at (7, 5.8) {$\mu_{1,2}$};

	\node[node] (e13) at (7.8, 6.1) {$\mu_{1,3}$};
	\node[node] (e14) at (8.6, 6.6) {$\mu_{1,4}$};
	\node[node] (e15) at (9.3, 7.1) {$\mu_{1,5}$};

	\node[stage] (l3) at (4,10.5) {\small $T_0$};
	\node[stage] (l4) at (7.8,10.1) {\small $T_1$};
	
	\draw[dotted] (c0) -- (c1) node [edgelabel, midway, above] {\small $f_0$};

	\draw[very thick] (c0) -- (d0);
	\draw[very thick] (d0) -- (e00);
	\draw (d0) -- (e01);
	\draw (d0) -- (e02);

	\draw[very thick] (c1) -- (d10);
	\draw[very thick] (c1) -- (d11);

	\draw[very thick] (d10) -- (e10);
	\draw (d10) -- (e11);
	\draw (d10) -- (e12);

	\draw[very thick] (d11) -- (e13);
	\draw (d11) -- (e14);
	\draw (d11) -- (e15);

	\draw[dotted] (d0) -- (d10) node [edgelabel, midway, below] {\small $f_1$};
	\draw[dotted] (d0) -- (d11) node [edgelabel, midway, above] {\small $f_2$};

\end{tikzpicture}
}
\end{center}
\caption{In this example, we start with a stage tree $T_0$ of depth 1 and want to decide some query $\varphi(D,U)$
for each of its three paths. We choose one path $\rho = c_0 \-- d_0 \-- \nu_{0,0}$,
call $\boxop(\nu_{0,0}, \varphi)$ to obtain a query $\psi(D)$, then call $\boxop(\lambda, \psi)$,
where $\lambda$ is the unique part of~$c_0$.
We obtain a query $\phi(D)$, $\emptyset'$-compute some answer $a$ to $\phi(\emptyset)$
and call $\unboxop(\lambda, a)$ to obtain some extension $c_1$ of $c_0$ and some answering function $b : \parop(c_1) \to \ansop[D,G]$. This extension forks the part $\lambda$ into two parts. Below each part $\lambda_i$ in $c_1$, we call $\unboxop(\lambda_i, b(\lambda_i))$
to obtain an extension of $d_{1,i}$ forcing $\varphi(D,G)$ below the parts refining $\nu_{0,0}$.}
\label{fig:refinement-query-progress}
\end{figure}

In Figure~\ref{fig:refinement-query-progress}, we give an example of one step in the decision process,
starting with a stage tree $T_0$ of depth $1$ with three undecided paths, and ending up with
some stage tree $T_1$ having four undecided paths ($c_1 \-- d_{1,0} \-- \mu_{1,1}$, $c_1 \-- d_{1,0} \-- \mu_{1,2}$,
$c_1 \-- d_{1,0} \-- \mu_{1,1}$ and $c_1 \-- d_{1,0} \-- \mu_{1,2}$).
Thankfully, the $\unboxop$ operator forks only the part on which it answers the query. Therefore,
at the next step, we will be able to consider only one of the parts of $c_1$ at a time.
The induced subtree has two undecided paths, so there is also some progress.

We now define some relation $\sqsubset$ between two stage trees $T_0$ and $T_1$ of depth $n$.
It describes the relation between the stage tree $T_0$ and the extension $T_1$ obtained after
applying one step of the query algorithm. More precisely, $T_2 \sqsubset T_0$
if $T_2$ is the subtree of $T_1$ on which we have removed every decided paths.

\begin{definition}
Given two stages trees $T_1 \leq T_0$ of depth~$n$,
we define the relations $T_1 \sqsubseteq T_0$ and $T_1 \sqsubset T_0$ by mutual induction as follows.
If $T_0$ and $T_1$ are conditions and $T_1 \leq_f T_0$, then 
$T_1 \sqsubseteq T_0$ if $f$ does not fork any part of~$T_0$. 
If moreover $f$ is not surjective, then $T_1 \sqsubset T_0$.
If $T_1 = \tuple{c_1, h_1}$, $T_0 = \tuple{c_0, h_0}$ and $c_1 \leq_f c_0$,
then $T_1 \sqsubseteq T_0$ if for every part $\nu$ of $c_1$,
if $f$ forks part~$f(\nu)$ of $c_0$ then $h_1(\nu) \sqsubset h_0(f(\nu))$,
otherwise $h_1(\nu) \sqsubseteq h_0(f(\nu))$. If moreover there is some part~$\mu$ of $c_0$
such that $h_1(\nu) \sqsubset h_0(\mu)$ for every part~$\nu$ of $c_1$ refining $\mu$,
then $T_1 \sqsubset T_0$.
\end{definition}

One easily proves by mutual induction over the depth of the trees the following facts:
\begin{itemize}
	\item[(i)] Both~$\sqsubset$ and~$\sqsubseteq$ are transitive
	\item[(ii)] If~$T_1 \sqsubset T_0$ then~$T_1 \sqsubseteq T_0$
	\item[(iii)] If~$T_2 \sqsubseteq T_1$ and~$T_1 \sqsubset T_0$ then~$T_2 \sqsubset T_0$
	\item[(iv)] If~$T_2 \sqsubset T_1$ and~$T_1 \sqsubseteq T_0$ then~$T_2 \sqsubset T_0$
\end{itemize}
Assuming that $\sqsubset$ truly represents the relation between a stage tree
and its extension after one step of query, the following lemma
can be understood as stating that the naive algorithm used in the proof of the query lemma terminates.

\begin{lemma}\label{lem:sq-well-founded}
The relation $T_1 \sqsubset T_0$ is well-founded.
\end{lemma}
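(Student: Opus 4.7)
The plan is to assign to every stage tree $T$ an ordinal rank $r(T)$ such that $T_1 \sqsubset T_0$ implies $r(T_1) < r(T_0)$. Well-foundedness of $\sqsubset$ then follows immediately from well-foundedness of the ordinals. I define $r$ by induction on the depth of the tree: for a condition $c$ (depth $0$), set $r(c) = |\parop(c)|$; for $T = \tuple{c, h}$ of positive depth, set $r(T) = \bigoplus_{\nu \in \parop(c)} \omega^{r(h(\nu))}$, where $\oplus$ denotes the natural (Hessenberg) sum of ordinals. Both sums are finite since $\parop(c)$ is finite by the definition of a module support.

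I would then prove by simultaneous induction on depth the two claims $T_1 \sqsubseteq T_0 \Rightarrow r(T_1) \leq r(T_0)$ and $T_1 \sqsubset T_0 \Rightarrow r(T_1) < r(T_0)$. The base case (depth $0$) is direct: a non-forking refinement function $f$ satisfies $|f^{-1}(\mu)| \leq 1$ for every part $\mu$, so it is injective and $|\parop(T_1)| \leq |\parop(T_0)|$; additionally requiring $f$ to be non-surjective turns this into a strict inequality.

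For the inductive step, write $T_i = \tuple{c_i, h_i}$ with $c_1 \leq_f c_0$ and regroup the natural sum by fibers of $f$, so that $r(T_1) = \bigoplus_{\mu \in \parop(c_0)} \bigoplus_{\nu \in f^{-1}(\mu)} \omega^{r(h_1(\nu))}$. The idea is to compare, for each $\mu \in \parop(c_0)$, the corresponding inner sum against $\omega^{r(h_0(\mu))}$. When $|f^{-1}(\mu)| \leq 1$, the definition of $\sqsubseteq$ gives $h_1(\nu) \sqsubseteq h_0(\mu)$ for the unique $\nu$ (if it exists), and the induction hypothesis delivers the required bound. When $|f^{-1}(\mu)| \geq 2$, the function $f$ forks $\mu$, so $h_1(\nu) \sqsubset h_0(\mu)$ for every $\nu \in f^{-1}(\mu)$, forcing each summand to lie strictly below $\omega^{r(h_0(\mu))}$. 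Monotonicity of $\oplus$ then yields $r(T_1) \leq r(T_0)$; in the $\sqsubset$ case, the distinguished part $\mu^*$ from the definition contributes a strictly smaller inner sum (vacuously so when $f^{-1}(\mu^*)$ is empty), and this strict inequality propagates to the whole sum.

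The main obstacle, although routine, is the repeatedly invoked closure fact that $\omega^\alpha$ is $\oplus$-indecomposable, i.e., closed under finite natural sums of ordinals strictly below it. This follows by inspecting Cantor normal forms: each summand has leading exponent $\beta < \alpha$, and the natural sum produces only finitely many terms of exponent $< \alpha$, so the result remains less than $\omega^\alpha$. Once this is in hand, the rest of the argument is bookkeeping driven by the case analysis on the cardinality of each fiber $f^{-1}(\mu)$.
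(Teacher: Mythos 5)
Your proof is correct, but it takes a genuinely different route from the paper's. The paper argues directly: it supposes an infinite $\sqsupset$-descending sequence $T_0 \sqsupset T_1 \sqsupset \cdots$ of depth $n{+}1$, tracks the tree of parts that eventually fork, and splits into two cases. If only finitely many parts ever fork, then past some stage the refinement functions stabilize and the infinite pigeonhole principle extracts a part along which the depth-$n$ subtrees form an infinite descending sequence. If infinitely many parts fork, K\"onig's lemma on the finitely-branching tree of forking parts yields an infinite path along which forks recur infinitely often, and each fork forces a strict drop in the corresponding depth-$n$ subtree. Either way the induction hypothesis at depth $n$ is contradicted. Your argument instead assigns to each stage tree $T$ an ordinal rank $r(T)$ (number of parts at depth $0$; Hessenberg sum $\bigoplus_\nu \omega^{r(h(\nu))}$ at higher depth) and proves that $\sqsubseteq$ is rank-nonincreasing and $\sqsubset$ is rank-decreasing, reducing well-foundedness to that of the ordinals. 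The two key ingredients you use --- strict monotonicity of the natural sum in each argument, and closure of $\{\beta : \beta < \omega^\alpha\}$ under finite natural sums --- are standard and handle the forking case exactly where the paper invokes K\"onig's lemma. Your approach is more modular and quantitative (it even implicitly bounds the rank of a depth-$n$ stage tree below a height-$n$ tower of $\omega$'s, hence below $\epsilon_0$), and it sidesteps the somewhat delicate bookkeeping in the paper's infinite-path extraction; the paper's version is more elementary in that it needs no ordinal arithmetic, only K\"onig's lemma and pigeonhole. Both are valid proofs.
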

\begin{proof}
By induction over the depth of the stage trees.
Suppose that $T_0 \sqsupset T_1 \sqsupset \dots$ is an infinite
decreasing sequence of stage trees of depth 0.
In particular, the $T$'s are conditions and $T_0 \geq_{f_0} T_1 \geq_{f_1} \dots$
for some functions $f_i$ which are injective, but not surjective. Therefore
the number of parts strictly decreases in $\omega$, contradiction.

Suppose now that $T_0 \sqsupset T_1 \sqsupset \dots$ is an infinite
decreasing sequence of stage trees of depth $n+1$,
where $T_i = \tuple{c_i, h_i}$ and $c_i \geq_{f_i} c_{i+1}$.
Let $S$ be the set of parts~$\nu$ in some~$c_i$
which will fork at a later $c_j$. This $S$ induces a finitely branching tree.
If $S$ is finite, then there is some~$j$ such that no part of~$c_k$ will ever fork for every~$k \geq j$.
By the infinite pigeonhole principle, there we can construct an infinite, decreasing sequence
of trees of depth $n$, contradicting our induction hypothesis. So suppose that $S$ is infinite.
By König's lemma, there is an infinite sequence of parts, the later refining the former,
such that they fork. Each time a conditions fork, the subtree is strictly decreasing, so we can
define an infinite decreasing sequence of stage trees of depth $n$, again contradicting our induction hypothesis.
\end{proof}

Given some stage tree $T_1$ of depth $i < n$, a \emph{completion of $T_1$ to $n$}
is a stage tree $T_2$ of depth $n$ such that $T_1 \uh i = T_0$.
If $T_1 \leq T_0 \uh i$ for some stage tree $T_0$ of depth $n$,
$T_0$ induces a completion $T_2$ of $T_1$ to $n$ by setting $T_2^{[\xi]} = T_0^{[\rho]}$
for every path $\xi$ through $T_1$ refining some path $\rho$ through $T_0 \uh i$.
One easily checks that $T_2 \leq T_0$.
Such a stage tree is called the \emph{trivial completion of $T_1$ by~$T_0$}.
The following technical lemma will be useful for applying 
the induction hypothesis in Lemma~\ref{lem:sq-one-step-forcing}.

\begin{lemma}\label{lem:sq-concatenation}
Let $T_0, T_1$ be two stage trees of depth $n+1$ and $T_2$ be a stage tree of depth $n$ 
and $S_0$ be a set of paths through $T_0 \uh n$ such that
\begin{itemize}
	\item[(i)] $T_2 \sqsubseteq T_0 \uh n$, $P(T_2) \subseteq P(T_1 \uh n)$ and $T_1 \leq T_0$
	\item[(ii)] $S_0$ is the set of paths through $T_0 \uh n$ refined by some path through $T_2$.
	\item[(iii)] For every path $\xi \in P(T_2)$, $T_1^{[\xi]} \sqsubseteq T_0^{[\rho]}$ where $\xi$ refines the path $\rho \in S_0$	
	\item[(iv)] For every path $\xi \in P(T_1 \uh n) \setminus P(T_2)$, $\xi$ refines some path $\rho \in P(T_0 \uh n) \setminus S_0$
	and $T_1^{[\xi]} \sqsubset T_0^{[\rho]}$
\end{itemize}
Then $T_1 \sqsubseteq T_0$. Moreover, if $T_2 \sqsubset T_0 \uh n$ then $T_1 \sqsubset T_0$.
\end{lemma}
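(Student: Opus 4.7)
The plan is to proceed by induction on the depth $n$ of $T_2$, in each case verifying the defining clauses of $\sqsubseteq$ (and of $\sqsubset$, for the moreover) by unfolding one level of the tree structure and feeding the appropriate subtrees into the induction hypothesis.

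For the base case $n = 0$, write $T_0 = \langle c_0, h_0\rangle$ and $T_1 = \langle c_1, h_1\rangle$ of depth $1$ and note that $T_2$ is a condition with $T_2 \leq_{f_2} c_0$ for an injective $f_2$ (the depth-$0$ clause of $\sqsubseteq$ forces non-forking). Let $c_1 \leq_f c_0$ be the root refinement coming from $T_1 \leq T_0$; compatibility with hypothesis (i) forces $f\!\uh \parop(T_2) = f_2$. For each part $\nu \in \parop(c_1)$ I split on whether $\nu \in \parop(T_2)$. If yes, hypothesis (iii) gives $h_1(\nu) \sqsubseteq h_0(f(\nu))$, and I rule out forking of $f(\nu)$ by $f$ using injectivity of $f_2$ together with (iv) (any other preimage would have to lie outside $\parop(T_2)$ and would then, by (iv), map to a part of $c_0$ outside $S_0$, contradicting $f(\nu)\in S_0$). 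If no, (iv) directly yields $h_1(\nu) \sqsubset h_0(f(\nu))$, which is stronger than what is required regardless of forking. This verifies $T_1 \sqsubseteq T_0$. The moreover follows because $T_2 \sqsubset c_0$ makes $f_2$ non-surjective, so some $\mu \in \parop(c_0)\setminus S_0$ exists and every $\nu$ with $f(\nu) = \mu$ satisfies $h_1(\nu) \sqsubset h_0(\mu)$ via (iv), witnessing the extra clause of $\sqsubset$.

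For the inductive step, write $T_i = \langle c_i, h_i\rangle$ and, for each $\nu \in \parop(c_1)$, apply the induction hypothesis to the depth-$(n+1)$ pair $(h_1(\nu), h_0(f(\nu)))$ using the intermediate tree $T_2' = h_2(\nu)$ when $\nu \in \parop(c_2)$ (and a trivial intermediate when $\nu \notin \parop(c_2)$). The restrictions of hypotheses (i)–(iv) to these subtrees go through cleanly: the subtree version $S_0'$ is exactly the set of paths in $h_0(f(\nu))\uh n$ refined by some path of $h_2(\nu)$, and paths $\nu,\xi'$ in $T_1 \uh (n+1)$ break into the $T_2$ and non-$T_2$ cases precisely as $\xi'$ lies in $h_2(\nu)$ or not. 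Once the IH is applied, the required subtree relations assemble via $c_1 \leq_f c_0$ to yield $T_1 \sqsubseteq T_0$, and the moreover is obtained by locating a witnessing root-level part $\mu$ of $c_0$ coming either from non-surjectivity of $f_2$ or from a strict IH conclusion.

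The main obstacle is the case in which $f$ forks a root-level part $\mu = f(\nu)$ with $\nu \in \parop(c_2)$: here the IH only supplies $h_1(\nu) \sqsubseteq h_0(\mu)$, yet the definition of $\sqsubseteq$ at $T_1 \sqsubseteq T_0$ demands the strict $\sqsubset$. I resolve this by a dichotomy on the other preimages of $\mu$ under $f$: if a second preimage lies in $\parop(c_2)$, then $f_2$ also forks $\mu$, so the depth-$(n+1)$ clause of $T_2 \sqsubseteq T_0 \uh (n+1)$ upgrades $h_2(\nu) \sqsubseteq h_0(\mu)\uh n$ to $h_2(\nu) \sqsubset h_0(\mu)\uh n$, triggering the moreover of the IH and giving the required $h_1(\nu) \sqsubset h_0(\mu)$; otherwise every other preimage lies in $\parop(c_1)\setminus \parop(c_2)$ and is handled by (iv) as in the base case. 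Careful bookkeeping of $S_0$ via (ii), together with properties (i)–(iv) of $\sqsubseteq/\sqsubset$ listed before the lemma (transitivity and the upgrade rules $\sqsubseteq+\sqsubset \Rightarrow \sqsubset$), is what makes this case analysis close.
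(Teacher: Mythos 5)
Your proof tracks the paper's argument closely: an induction on $n$, with the depth-$0$ base case handled via conditions and the inductive step feeding the subtrees $h_1(\nu)$, $h_0(f(\nu))$, $h_2(\nu)$ into the induction hypothesis part by part. Your base case is correct, and your ``moreover'' argument there is actually cleaner than the paper's (you extract a single witness part of $c_0$ from the non-surjectivity of $f_2$, rather than splitting on whether the two refinement functions share a domain).

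The soft spot is the dichotomy you invoke in the inductive step when the root refinement of $T_1$ forks $\mu = f(\nu)$ with $\nu \in \parop(c_2)$. You assert that if no second preimage of $\mu$ lies in $\parop(c_2)$, then this is ``handled by (iv) as in the base case,'' i.e.\ ruled out by contradiction. But the base-case contradiction crucially uses that $S_0$ is a set of \emph{root-level parts} of $c_0$: a $c_2$-preimage of $\mu$ forces $\mu \in S_0$ while a non-$c_2$-preimage forces $\mu \notin S_0$. In the inductive step, $S_0$ is a set of full-length paths through $T_0 \uh n$; a $c_2$-preimage $\nu$ contributes $\mu$-prefixed paths in $S_0$ while a non-$c_2$-preimage $\nu'$ contributes $\mu$-prefixed paths outside $S_0$, and these can coexist without any clash. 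So the reduction to ``either all preimages in $c_2$ or none'' --- which is exactly what you need to upgrade the IH's $h_1(\nu) \sqsubseteq h_0(\mu)$ to the required $h_1(\nu) \sqsubset h_0(\mu)$ --- is not actually established by the argument you give. To be fair, the paper's own inductive step asserts the same dichotomy without justification, so this is a shared loose end rather than a departure from the paper (and it is harmless in the only application, where Lemma~\ref{lem:sq-one-step-forcing} feeds in $T_1 - S_1$ with $(T_1-S_1)\uh n$ equal to $T_2$, so the roots of $c_1$ and $c_2$ coincide and the dichotomy is trivial). There are also several depth bookkeeping slips in your write-up ($T_2 \sqsubseteq T_0 \uh (n+1)$, ``depth-$(n+1)$ pair,'' $h_0(f(\nu)) \uh n$), but these are cosmetic.
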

\begin{proof}
By induction over $n$. In the base case, $T_0 \uh n$, $T_1 \uh n$ and $T_2$
are conditions $c_0$, $c_1$ and $c_2$ such that $c_2 \leq_f c_0$ and $c_1 \leq_g c_0$ for some refinement functions $f$ and $g$.
We easily have $T_1 \sqsubseteq T_0$ since $T_1^{[\mu]} \sqsubseteq T_0^{[g(\nu)]}$ for every
part $\mu$ of $c_2$ (and therefore of~$c_1$),
and since $T_1^{[\mu]} \sqsubset T_0^{[g(\nu)]}$ whenever $\mu$ is a part of~$c_1$ which is not a part of~$c_2$. 
By $c_2 \sqsubseteq c_0$, the only places where a fork can happen is when $\mu$ is not in $c_2$.

We now want to prove that $T_1 \sqsubset T_0$ whenever~$c_2 \sqsubset c_0$.
Since $c_2 \sqsubset c_0$, $f$ is injective, but not surjective. We need to prove that there is some part~$\nu$ of $T_1$ such that $T_1^{[\nu]} \sqsubset T_0^{[g(\nu)]}$. We have two cases. In the first case, $f$ and $g$ have the same domain. In this case $f = g$ and since $f$ is not surjective, 
there is some part of~$c_0$ witnessing the strictness of $T_1 \sqsubset T_0$.
In the second case, there is some part~$\nu$ in $c_1$ but not $c_2$. By (iv), $g(\nu) \not \in S_1$. The part~$g(\nu)$ of $c_0$ witnesses the stricteness of $T_1 \sqsubset T_0$.

In the induction case, $T_0 \uh n = \tuple{c_0, h_0}$, $T_1 \uh n = \tuple{c_1, h_1}$ and $T_2 = \tuple{c_2, h_2}$
such that $c_2 \leq_f c_0$ and $c_1 \leq_g c_0$ for some refinement functions $f$ and $g$.
For every part~$\nu$ in $c_1$, we have two cases:
In the first case, $\nu$ is not in $c_2$. By (iv), any path $\xi$ through $h_1(\nu)$
refines some path $\rho$ in $h_0(g(\nu))$ such that $h_1(\nu)^{[\xi]} \sqsubset h_0(g(\nu))^{[\rho]}$.
By the induction hypothesis applied to $h_0(\nu)$, $h_1(\nu)$ and the empty tree, $h_1(\nu) \sqsubset h_0(g(\nu))$.
In the second case, $\nu$ is also in $c_2$. By the induction hypothesis applied to $h_0(\nu)$, $h_1(\nu)$
and $h_2(\nu)$, $h_1(\nu) \sqsubseteq h_0(g(\nu))$.
We again easily have $T_1 \sqsubseteq T_0$ since $h_1(\nu) \sqsubseteq h_0(g(\nu))$ for every part~$\nu$ in $c_1$
and since whenever $g$ forks some part~$\mu$ of~$c_0$, either the parts $\nu$ of $c_1$ refining $\mu$
are all in $c_2$ in which case $h_2(\nu) \sqsubset h_0(\mu) \uh n$ by the definition of the partial order and then we have $h_1(\nu) \sqsubset h_0(\mu)$, or none of the parts $\nu$ of $c_1$ refining $\mu$ are in $c_2$,
in which case we have $h_1(\nu) \sqsubset h_0(\mu)$.
By the same case analysis as in the base case, we deduce that $T_1 \sqsubset T_0$
if moreover $T_2 \sqsubset T_0 \uh n$.
\end{proof}

\begin{definition}[Stage tree substration]
Given a stage tree $T$ of depth $n$ and a set $S$ of paths through $T$,
we define $T - S$ inductively as follows:
If $T$ is a stage tree of depth 0, then $S$ is a set of parts of $T$
and $T - S$ is the condition whose parts are $\parop(T) \setminus S$.
If $T = \tuple{c, h}$ is a stage tree of depth $n+1$, then $S$ is a set of paths of the form
$\nu\rho$ where $\nu$ is a part of $c$ and $\rho$ is a path through $h(\nu)$.
For each part~$\nu$, let $S_\nu = \{ \rho : \nu\rho \in S\}$.
The stage tree $T - S$ is defined by $\tuple{c, h_1}$ where $h_1(\nu) = h(\nu) - S_\nu$
for each part $\nu$ of $c$.
\end{definition}

Intuitively, $T - S$ is the maximal subtree of $T$ such that
$P(T - S) = P(T) \setminus S$. Beware, even if we may remove every part of a condition,
we do not remove the condition from the tree.
The following lemma uses the well-founded partial order
defined previously to show that we can make some progress
in deciding the queries. In what follows, the set~$T_0$ can be thought of as
the stage tree we obtain after having applied finitely many steps
of query and $S_0$ are the paths through the tree~$T_0$ for which we have already
decided the query~$\varphi(D, G)$. The lemma describes the relation
between the state~$(T_1, S_1)$ obtained from~$(T_0, S_0)$ after having applied one more step.

\begin{lemma}\label{lem:sq-one-step-forcing}
Let $T_0$ be a stage tree of depth $n$, $S_0$ be a set of paths through $T_0$ and let~$\varphi(D,G)$ be a query.
For every path $\rho \not \in S_0$ through $T_0$, there exists a stage tree $T_1 \leq T_0$ of depth $n$
and a set $S_1$ of paths through $T_1$ such that
\begin{itemize}
	\item[(i)] $T_1 \Vdash_\xi \varphi(D,G)$ or $T_1 \Vdash_\xi \neg \varphi(D, G)$ for every
	path $\xi$ through $T_1$ refining $\rho$.
	\item[(ii)] $T_1 - S_1 \sqsubset T_0 - S_0$
	\item[(iii)] Every path in $S_1$ refines either a path in $S_0$ or $\rho$.
\end{itemize}
Moreover, $T_1$ and the function of answers $a : P(T_1) \to \ansop[D,G]$
can be $\emptyset'$-effectively computed uniformly in $T_1$ and $\varphi(D,G)$.
\end{lemma}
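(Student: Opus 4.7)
The plan is to decide $\varphi$ along the path $\rho = \nu_0\nu_1\ldots\nu_n$ by a box--decide--unbox procedure, then construct $T_1$ by inserting the unboxed extensions below $\rho$ in $T_0$ and leaving the rest of $T_0$ unchanged.

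First I would box $\varphi$ along $\rho$ from deepest to shallowest. Setting $\varphi_n := \varphi$, define $\psi_i := \boxop(\nu_i,\varphi_i)$ and (for $i>0$) $\varphi_{i-1}(D,G) := \psi_i(D \oplus G)$, reinterpreting each boxed output as a query at the next level up. The final $\psi_0$ has only the (formally empty) top-level $D$ as a parameter, so $\emptyset'$ computes an answer $a_0$ to it. Then unbox from shallow to deep: let $(d_0,f_0,g_0) := \unboxop(\nu_0,a_0)$, so that $d_0 \leq_{f_0} c_0$ with $f_0$ forking only $\nu_0$, and for each $\mu_0$ with $f_0(\mu_0)=\nu_0$, $g_0(\mu_0)$ is an answer to $\varphi_0(D,G_0)$ for all $G_0 \in \satop(\mu_0)$, hence an answer to $\psi_1$ with $D$ instantiated to $G_0$. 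For each such $\mu_0$ recurse: apply $\unboxop(\nu_1,g_0(\mu_0))$ at the root of $h_0(\nu_0)$, then $\unboxop(\nu_2,\cdot)$ on the next level, and so on through $\nu_n$, at each step forking only the relevant $\nu_i$.

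Define $T_1$ by taking $d_0$ as the new root; below a part $\mu$ of $d_0$ with $f_0(\mu)=\nu_0$ place the subtree produced by the recursive unboxing, and below a part with $f_0(\mu)\neq \nu_0$ inherit the unchanged subtree $h_0(f_0(\mu))$. Apply the same rule level by level along $\rho$. Take $S_1$ to be the set of paths of $T_1$ that either refine a path in $S_0$ or refine $\rho$, and extract the answer function from the terminal $g_n$ along paths refining $\rho$, setting a default value elsewhere.

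Properties (i) and (iii) then follow directly from the construction and from the uniform semantic guarantee of $\unboxop$. The core verification is (ii), $T_1 - S_1 \sqsubset T_0 - S_0$, which I would establish by induction on $n$ using the recursive clause of $\sqsubset$. For a part $\mu$ of $d_0$ with $f_0(\mu)\neq \nu_0$, the subtree and the paths removed below $\mu$ in $T_1 - S_1$ coincide with those below $f_0(\mu)$ in $T_0 - S_0$, giving $\sqsubseteq$; for $\mu$ with $f_0(\mu)=\nu_0$, the inductive hypothesis applied to $h_0(\nu_0)$ with subpath $\rho' = \nu_1\ldots\nu_n$ and set $S_0^{\nu_0} = \{\rho'' : \nu_0\rho'' \in S_0\}$ yields $\sqsubset$. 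Since $f_0$ forks only $\nu_0$ and the subtrees below every refining part of that fork strictly decrease, the recursive clause of $\sqsubset$ witnessed by the part $\nu_0$ delivers the required strict progress. Uniformity and $\emptyset'$-effectiveness are inherited from the computability of $\boxop$, $\unboxop$, $\parop$ and the single $\emptyset'$-query used to decide $\psi_0$.

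The main obstacle will be the semantic bookkeeping of the boxing chain: at each level $i$ the parameter $D$ of $\varphi_i$ must be interpreted as the join of sets satisfying the parts above level $i$ on the constructed path, and the uniformity built into $\unboxop$'s guarantee (an answer valid for every $G \in \satop(\mu)$) must compose correctly down the chain so that the terminal $g_n$ genuinely answers the original $\varphi$ on every tuple satisfying a refining path. A secondary delicate point is the recursive use of $\sqsubset$: after establishing $\sqsubseteq$ on every branch of $d_0$, one must exhibit $\nu_0$ itself as the single root-part below which every refining subtree strictly decreases, which is exactly what the inductive hypothesis provides.
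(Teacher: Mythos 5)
Your overall box--decide--unbox strategy matches the paper's, but the recursion you propose is genuinely different, and the difference matters for the logical organization of the argument. The paper inducts on the depth of the stage tree by \emph{truncating from the bottom}: it boxes only the leaf part $\nu = \nu_n$ once, passes the pre-boxed query $\phi(D,G) = \psi(D \oplus G)$ to the inductive hypothesis applied to $T_0 \uh n$, and then completes the resulting $T_2$ by a single layer of $\unboxop$ calls at the leaves. The gluing in (ii) is then handled by Lemma~\ref{lem:sq-concatenation}, which was designed precisely to combine a strict decrease at depth $n$ with a completion to depth $n+1$. You instead \emph{descend from the top}: you box all the way down first, decide once with $\emptyset'$, then unbox from the root downward, and you want to prove (ii) by recursing into the subtree $h_0(\nu_0)$ with the truncated path $\nu_1\ldots\nu_n$. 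This direction is plausible and would avoid Lemma~\ref{lem:sq-concatenation}: once you know $h_1'(\mu_0) \sqsubset h_0'(\nu_0)$ for every $\mu_0$ refining $\nu_0$, the recursive clause of $\sqsubset$ (witnessed by $\nu_0$, together with $h_1'(\mu) = h_0'(f_0(\mu))$ for unforked $\mu$) gives the conclusion directly. However, the ``inductive hypothesis'' you invoke is not the lemma itself: your recursive call takes the already-computed answer $g_0(\mu_0)$ as input rather than making a fresh $\emptyset'$-query, and the answers differ across the forked parts $\mu_0$, producing distinct subtrees. So you need to state and prove a separate sub-lemma of the form ``given a stage tree $T$ of depth $n$, a path $\rho' \not\in S$, and an answer $a$ to the fully boxed query along $\rho'$, the explicit unboxing of $a$ produces $T' \leq T$ and $S' = \{\xi \in P(T') : \xi \text{ refines a path in } S \text{ or } \rho'\}$ with $T' - S' \sqsubset T - S$,'' and then derive the lemma by a single boxing-and-deciding step at the outermost level. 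You already flag exactly this as the delicate point; it is the genuine gap in the sketch, though it is a reorganization rather than a conceptual error. A minor remark: your $S_1$ is the full set of paths refining $S_0$ or $\rho$, whereas the paper's $S_1$ is the subset of such paths whose depth-$n$ prefix lies in the inductively obtained $S_2$; the larger set is still admissible since removing extra leaf parts only strengthens the $\sqsubset$ relation.
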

\begin{proof}
By induction over $n$.
If $T_0$ is a stage tree of depth 0, then it is a condition $c_0$
and the paths through $T_0$ are the parts of~$c_0$. Let~$\nu$ be such a part.
Let~$\psi(D)$ be the query $\boxop(\nu, \varphi)$.
We can $\emptyset'$-compute an answer $a_0$ to $\psi(\emptyset)$.
Let~$\tuple{c_1,f,a} = \unboxop(\nu, a_0)$ be such that $c_1 \leq_f c_0$,
$f$ forks only part~$\nu$ of $c_0$ and for every part~$\mu$ of $c_1$ such that $f(\mu) = \nu$,
$c_1 \Vdash_\mu \varphi(D, G)$ or $c_1 \Vdash_\mu \neg \varphi(D, G)$ and $a(\mu)$
answers $\varphi(D,G)$ accordingly. Take $S_1 = \{ \mu : f(\mu) = \nu \vee f(\mu) \in S_0\}$.
The property (i) holds by definition of $c_1$ and (iii) holds by definition of $S_1$.
Since the only forked part is $\nu$ and no part of~$c_1 - S_1$ refines $\nu$,
$c_1 - S_1 \sqsubset c_0 - S_0$, so the property (ii) also holds. This completes the base case.

Suppose now that $T_0$ is a stage tree of depth~$n+1$.
The paths through $T_0$ are of the form $\rho\nu$
where $\rho$ is a path through $T_0 \uh n$ and $\nu$ is a part of the root of $T_0^{[\rho]}$.
Fix any such path. Let~$\psi(D)$ be the query $\boxop(\nu, \varphi)$
and let $\phi(D, G)$ be the formula $\psi(D \oplus G)$.
By induction hypothesis on $T_0 \uh n$, there is a stage tree $T_2 \leq T_0 \uh n$
and a set $S_2$ such that
\begin{itemize}
	\item[(i)] $T_2 \Vdash_\xi \phi$ or $T_2 \Vdash_\xi \neg \phi$ for every
path $\xi$ through $T_2$ refining $\rho$
	\item[(ii)] $T_2 - S_2 \sqsubset T_0 \uh n - S_0 \uh n$
	\item[(iii)] Every path in $S_2$ refines either a path in $S_0 \uh n$
	or $\rho$.
\end{itemize}
Moreover, still by induction hypothesis, we have a function $a : P(T_2) \to \ansop[D, G]$
answering the queries. We define a completion of $T_2$ into a stage tree $T_1$
of depth $n+1$ as follows: For each path $\xi$ through $T_2$ refining $\rho$,
let $T_1^{[\xi]}$ be the condition~$c_\xi$ such that $\tuple{c_\xi, f_\xi, a_\xi} = \unboxop(\nu, a(\xi))$.
For each path $\xi$ through $T_2$ which refines some path $\tau$ through $T_0$ different from $\rho$, let $T_1^{[\xi]} = T_0^{[\tau]}$.
By construction, $T_1 \leq T_0$ since $c_\xi$ $f_\xi$-refines $T_0^{[\rho]}$ whenever $\xi$ refines $\rho$
and since any condition refines itself. Let $S_1$ be the collection of paths $\xi\mu$
through $T_1$ such that $\xi \in S_2$ and either $\xi$ refines $\rho$ and $f_\xi(\mu) = \nu$, or $\xi\mu$
refines a path in $S_0$. Since $(T_0 \uh n - S_0 \uh n) \sqsubseteq (T_0 - S_0) \uh n$,
we have $T_2 - S_2 \sqsubset (T_0 - S_0) \uh n$. We can therefore apply Lemma~\ref{lem:sq-concatenation} to $T_0 - S_0$, $T_1 - S_1$, and $T_2 - S_2$, to obtain $T_1 - S_1 \sqsubset T_0 - S_0$.
Define the answer function $b : P(T_1) \to \ansop[D, G]$ by $b(\xi\mu) = a_\xi(\mu)$
for each path $\xi$ through $T_2$ refining $\rho$. This function $b$ is found $\emptyset'$-effectively
since the $\unboxop$ operator is computable.
\end{proof}

The following lemma simply iterates Lemma~\ref{lem:sq-one-step-forcing}
and uses the well-foundedness of the relation $\sqsubset$ to deduce
that we can find some extension on which the queries are decided for every path.

\begin{lemma}\label{lem:sq-one-level-forcing}
Let $T_0$ be a stage tree of depth $n$ and let~$q : P(T_0) \to \queryop[D,G]$ be a function.
There is a stage tree $T_1 \leq T_0$ of depth $n$ such that
$T_1 \Vdash_\xi q(\rho)$ or $T_1 \Vdash_\xi \neg q(\rho)$ for every
path $\xi$ through $T_1$ refining some path $\rho$ through $T_0$.
Moreover, $T_1$ and the function of answers $a : P(T_1) \to \ansop[D,G]$
can be $\emptyset'$-effectively computed uniformly in $T_1$ and $q$.
\end{lemma}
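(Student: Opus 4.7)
The plan is to iterate Lemma~\ref{lem:sq-one-step-forcing} one path at a time, using the well-foundedness of $\sqsubset$ from Lemma~\ref{lem:sq-well-founded} to guarantee that the procedure terminates. At each step I will have already decided the query on some paths and will extend the current tree in order to handle one additional path.

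Concretely, I would construct inductively a finite sequence $(T^0, S^0), (T^1, S^1), \dots, (T^K, S^K)$ in which $T^k \leq T_0$ is a stage tree of depth $n$ and $S^k \subseteq P(T^k)$ is the set of \emph{already decided} paths at stage $k$. I start with $T^0 = T_0$ and $S^0 = \emptyset$. At stage $k$, if $S^k = P(T^k)$ I stop and output $T_1 = T^k$; otherwise I pick any $\rho \in P(T^k) \setminus S^k$, let $\rho_0 \in P(T_0)$ be the unique path that $\rho$ refines, and apply Lemma~\ref{lem:sq-one-step-forcing} to $T^k$, $S^k$, the query $q(\rho_0)$ and the path $\rho$. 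This yields a pair $(T^{k+1}, S^{k+1})$ with $T^{k+1} \leq T^k$ and $T^{k+1} - S^{k+1} \sqsubset T^k - S^k$.

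Two invariants should be maintained. First, every $\xi \in S^k$ refines a unique path $\rho_0 \in P(T_0)$ and $T^k$ forces $q(\rho_0)$ or $\neg q(\rho_0)$ below $\xi$; this is preserved because clause~(iii) of Lemma~\ref{lem:sq-one-step-forcing} ensures that every path added to $S^{k+1}$ either refines some already decided path $\xi' \in S^k$, where the decision persists under $T^{k+1} \leq T^k$, or refines $\rho$, where it is decided by clause~(i). Second, $T^{k+1} - S^{k+1} \sqsubset T^k - S^k$ is exactly clause~(ii). By Lemma~\ref{lem:sq-well-founded}, the $\sqsubset$-decreasing sequence $T^0 - S^0 \sqsupset T^1 - S^1 \sqsupset \dots$ must be finite, and the iteration can only halt with $S^K = P(T^K)$, yielding the desired conclusion for $T_1 = T^K$. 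Uniform $\emptyset'$-effectivity is inherited stepwise: the finiteness of the objects involved makes each check $S^k = P(T^k)$ and each selection of $\rho$ computable, each update of $(T^k, S^k)$ together with the accumulated partial answering function is $\emptyset'$-effective through Lemma~\ref{lem:sq-one-step-forcing}, and concatenating these finitely many steps produces a uniform $\emptyset'$-computation of $T_1$ and $a : P(T_1) \to \ansop[D,G]$.

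The main obstacle I anticipate is bookkeeping rather than deep: one must verify carefully that the set $S^k$ of decided paths is genuinely stable under refinement of stage trees, so that a path $\xi' \in S^k$ on which the relevant query is forced does not give rise to refining paths in $T^{k+1}$ that escape the decided set. Clause~(iii) of Lemma~\ref{lem:sq-one-step-forcing} combined with the standard persistence of the forcing relation resolves this, but tying the indices of the decided paths to the correct path $\rho_0 \in P(T_0)$ across all refinements is the only subtle point.
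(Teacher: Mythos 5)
Your proof is correct and follows essentially the same strategy as the paper's: build a finite sequence of pairs $(T^k, S^k)$ by repeatedly invoking Lemma~\ref{lem:sq-one-step-forcing} on an as-yet-undecided path, track the decided paths in $S^k$, use clause~(iii) of that lemma together with persistence of forcing under refinement to show the decided set is stable, and appeal to Lemma~\ref{lem:sq-well-founded} for termination. The paper carries out the bookkeeping with explicit tuples $(T_i, S_i, \rho_i, \tau_i)$, but the invariants maintained and the way well-foundedness is applied are the same as what you describe.
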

\begin{proof}
Using Lemma~\ref{lem:sq-one-step-forcing}, 
define a sequence of tuples $\tuple{T_0, S_0, \rho_0, \tau_0}, \tuple{T_1, S_1, \rho_1, \tau_1}, \dots$
starting with $T_0$, $S_0 = \emptyset$, $\rho_0 = \tau_0 \in P(T_0)$
and such that for each~$i$
\begin{itemize}
	\item[(i)] $T_{i+1} \leq T_i$ is a stage tree of depth $n$, $S_i$ is a set of paths through $T_i$
	\item[(ii)] $\rho_i$ is a path through $T_i - S_i$ refining the path $\tau_i$ through $T_0$.
	\item[(iii)] $T_{i+1} \Vdash_\xi q(\tau_i)$ or $T_{i+1} \Vdash_\xi \neg q(\tau_i)$ for every
	path $\xi$ through $T_{i+1}$ refining $\rho_i$
	\item[(iv)] $T_{i+1} - S_{i+1} \sqsubset T_i - S_i$
	\item[(v)] Every path in $S_{i+1}$ refines either a path in $S_i$ or $\rho_i$.
\end{itemize}
By Lemma~\ref{lem:sq-well-founded}, the relation $\sqsubset$ is well-founded, so the sequence has to be finite by (iv). 
Let $k$ be the maximal index of the sequence.
By maximality of $k$ and by Lemma~\ref{lem:sq-one-step-forcing}, $P(T_k) - S_k = \emptyset$.
Therefore, $P(T_k) = S_k$. Since $S_0 = \emptyset$ and by (v), we can prove by induction over $k$ that
for every path $\xi$ through $T_k$, there is some stage $i < k$ such that $\xi$ refines $\rho_i$.
Thus, by (iii) and by stability of the forcing relation under refinement, $T_k \Vdash_\xi q(\tau_i)$ or $T_k \Vdash_\xi \neg q(\tau_i)$.
Therefore $T_k$ satisfies the statement of the lemma.
The uniformity is inherited from the uniformity of Lemma~\ref{lem:sq-one-step-forcing}.
\end{proof}

Last, we prove the query lemma by iterating the previous lemma at every depth of the stage
tree, to decide the queries on the partial paths.

\begin{proof}[Proof of the query lemma]
Let $T_0$ be a stage tree of depth $n$ and $q : PP(T_0) \to \queryop[U,G]$ be a function.
Using Lemma~\ref{lem:sq-one-level-forcing},
define a decreasing sequence of stage trees $T_0 \geq \dots \geq T_n$  of depth $n$
such that for each~$i < n$, 
\begin{itemize}
	\item[(i)] $T_{i+1}$ is the trivial completion of $T_{i+1} \uh i+1$ by $T_i$.
	\item[(ii)] $T_{i+1} \Vdash_\xi q(\tau)$ or $T_{i+1} \Vdash_\xi \neg q(\tau)$ for every
	path $\xi$ through $T_{i+1} \uh i+1$ refining some path $\tau$ through $T_0 \uh i+1$.
\end{itemize}
To do this, at stage $i < n$, apply Lemma~\ref{lem:sq-one-level-forcing} to $T_i$ with the query function $r : PP(T_i) \to \queryop[U,G]$
defined by $r(\rho) = q(\tau)$ for each path $\rho$ through $T_i \uh i+1$ refining some path $\tau$ through $T_0 \uh i+1$.
Since the forcing relation is stable by refinement, the stage tree $T_n$ satisfies the statement of the query lemma.
The uniformity is again inherited from the uniformity of Lemma~\ref{lem:sq-one-level-forcing}.
\end{proof}

This completes the presentation of the framework. We will now define a module for the Erd\H{o}s-Moser
theorem. In section~\ref{sect:separating-amt-combined}, we will see how to compose modules to obtain stronger separations.

\section{The weakness of~$\emo$ over $\omega$-models}

Now we have settled the domination framework, it suffices to implement
the abstract module to obtain $\omega$-structures which do not satisfy $\amt$.
We have illustrated the notion of module by implementing one for $\coh$.
An immediate consequence is the existence of an $\omega$-model of $\coh$
which is not a model of~$\amt$. 
In this section, we shall extend this separation to the Erd\H{o}s-Moser theorem.
As noted before, every $\omega$-model of~$\emo$ which is not a model of~$\amt$
is also a model of~$\coh$.
This section is devoted to the proof of the following theorem.

\begin{theorem}\label{thm:emo-not-amt}
There exists an $\omega$-model of~$\emo$ which is not a model of~$\amt$.
\end{theorem}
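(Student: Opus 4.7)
The plan is to implement an abstract module $\tuple{\Sb_{\emo}, \boxop, \unboxop, \progop}$ realising the forcing of Section~\ref{sect:emo-computable-reducibility} and to feed it into the construction of Subsection~\ref{subsect:framework-construction}. The resulting Turing ideal $\Mcal$ will be the second-order part of an $\omega$-model of $\emo$ with every function in $\Mcal$ dominated by a single $\Delta^0_2$ function $f$. Then any $\Delta^0_2$ approximation $\tilde{f}$ of $f$ is a computable (hence in $\Mcal$) instance of the escape property with no solution in $\Mcal$, so $\Mcal \not \models \amt$ by the computable equivalence between $\amt$ and the escape property.

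For the module support, conditions are the tuples $(\alpha, \vec{F}, \Ccal)$ of Section~\ref{sect:emo-computable-reducibility}, taken relative to a uniformly $D$-computable enumeration of all $D$-computable infinite tournaments, obtained via the sub-uniformity trick of Mileti together with the low basis theorem relative to $D$. Parts are the parts of $\Ccal$, $\iniop(n)$ is the trivial condition $(\varepsilon, \emptyset, \{\omega\})$ packaged with this tournament sequence, and $\satop(\tuple{c,\nu})$ is the set of reals $G$ satisfying the associated Mathias condition and eventually $R_i$-transitive for every $i < |\alpha|$. Property~(a) of a module support is witnessed by the refinement function $f : parts(\Dcal) \to parts(\Ccal)$ built into the definition of~$\leq_\Pb$ in Section~\ref{sect:emo-computable-reducibility}.

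The boxing operator $\boxop(\tuple{c,\nu},\varphi(D,G))$ outputs the $\Sigma^0_1$ formula asserting the existence of a finite $F_1 \subseteq Z_\nu$ (for some $Z \in \Ccal$) which is $R_i^D$-transitive simultaneously for every $i<|\alpha|$ and such that $\varphi(D, F_\nu \cup F_1)$ holds; this is exactly question~Q2 of Section~\ref{sect:emo-computable-reducibility}. For unboxing, on a positive answer I use the positive branch of Lemma~\ref{lem:em-comp-reduc-force-dense} to produce a non-forking extension via the $2^{|\alpha|}$-partition of $dom(\Ccal)$ induced by the witness $F_1$; on a negative answer, I fork part $\nu$ into $2^{|\alpha|}$ copies along the $\Pi^{0,C}_1$ cover class of the same lemma, forcing $\neg \varphi(D,G)$ on each fork. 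Progress is handled by Lemma~\ref{lem:em-comp-reduc-ext}, which jointly grows $|\alpha|$ and every acceptable part. The verification of the module axioms reduces in every case to Lemma~\ref{lem:em-comp-reduc-sufficient-cond-ext}. Feeding this module into Subsection~\ref{subsect:framework-construction} and iterating Lemma~\ref{lem:growth-lemma}, Lemma~\ref{lem:query-lemma} and Lemma~\ref{lem:domination-lemma} yields a $\Delta^0_2$ descending sequence of stage trees along which any path produces sets $G_0, G_1, \dots$ with each $G_i$ eventually transitive for every $(G_0 \oplus \dots \oplus G_{i-1})$-computable tournament, together with the desired dominating $\Delta^0_2$ function~$f$; the Turing ideal~$\Mcal$ generated by $\{G_i : i \in \omega\}$ then satisfies the conclusion.

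The main obstacle I expect is reconciling the non-existence of a universal $\emo$-instance with the shape of $\iniop$: at each iteration we must not pick a single tournament but a $D$-uniform enumeration of all $D$-computable infinite tournaments, and we need both that this enumeration depends computably on~$n$ and that the boxing queries remain syntactically~$\Sigma^0_1$ in~$D$. This is precisely what the formal parameter~$D$ of the module framework is meant to encapsulate, but care is required in combining the sub-uniformity argument with the relativised low basis theorem so that the dependence on~$D$ is uniform across all iteration levels; otherwise either $\iniop$ fails to be computable or the boxed questions cease to be $\Sigma^{0,D}_1$, and the framework can no longer be applied.
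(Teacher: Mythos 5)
Your high-level scheme (module for $\emo$, feed it into the framework of Subsection~\ref{subsect:framework-construction}, dominate and invoke the escape-property characterization) matches the paper, but the concrete implementation you propose runs into exactly the obstacle you flag at the end, and the paper resolves it by a different device which your plan does not anticipate. The sub-uniformity plus low basis trick from Section~\ref{sect:emo-computable-reducibility} cannot be lifted to the iterated setting: relativizing to a low enumeration of all computable tournaments puts a new oracle into the ideal, whose tournaments then need yet another enumeration, and so on; while taking the low basis theorem ``relative to the formal parameter~$D$'' makes $\iniop$ depend non-computably on the iteration level and makes the boxed queries fail to be $\Sigma^{0,D}_1$, since producing a low member of a $\Pi^{0,D}_1$ class is not a $\Sigma^{0,D}_1$ event. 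The paper sidesteps this entirely with Lemma~\ref{lem:em-uniform-model}: one fixes a single computable enumeration of all primitive recursive tournament functionals and proves that if the resulting ideal $\Mcal_{\vec X}$ fails $\amt$ then it already satisfies $\emo\wedge\coh$. The proof exploits $\rca\vdash\semo\wedge\coh\imp\emo$, Shoenfield's limit lemma to reduce stable $X$-computable tournaments to $X$-p.r.\ ones, and the dichotomy from~\cite{Patey2015Somewhere} tying $\coh$, thin sets and $\amt$. This lets $\iniop(n)$ return $(\{\emptyset\},\{\omega\},R_n)$ for the $n$th p.r.\ tournament functional and lets each condition carry a single tournament, rather than the multi-tournament $(\alpha,\vec F,\Ccal)$ conditions of Section~\ref{sect:emo-computable-reducibility}.

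There is a second gap in your boxing and unboxing operators. Question Q2 does not survive the passage to the syntactic parameter~$D$: in Section~\ref{sect:emo-computable-reducibility} a positive answer lets one $C'$-effectively find the witnessing $R_i$-transitive set $F_1$ and the right $2^{|\alpha|}$-partition precisely because $C$, and hence the tournaments, are fully known, whereas in the iterated setting $D$ is purely formal and revealed only finitely. A bare existential answer to Q2 gives no way to select $F_1$, so your positive-branch $\unboxop$ cannot be made computable. The paper therefore replaces Q2 by Q4 and Q5', whose answer must supply a finite function $g$ mapping each 2-partition of~$E$ to the correct $R^D$-transitive witness; it further has to cope with the fact that which $E^i$ lands in~$X_\nu$ may depend on~$D$, by trying all of them in parallel via the generalized restriction $\Ccal^{[\nu,\vec E]}$ and a coarsened extension relation that disables inconsistent parts by emptying their reservoirs. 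Your proposal, as written, would require you to make choices at finite stages that cannot be made.
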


At first sight, the forcing notion introduced in section~\ref{sect:emo-computable-reducibility} seems
to have a direct mapping to the abstract notion of forcing defined in the domination framework.
However, unlike cohesiveness where the module implementation was immediate,
the Erd\H{o}s-Moser theorem raises new difficulties:
\begin{itemize}
	\item The Erd\H{o}s-Moser theorem is not known to admit a universal instance.
	We will therefore need to integrate the information about the instance in the notion of condition.
	Moreover, the $\iniop$ operator will have to choose accordingly some new instance of~$\emo$
	at every iteration level. We need to make~$\iniop$ computable, but the collection of every infinite computable tournament functionals
	is not even computably enumerable.

	\item The notion of EM condition introduced in section~\ref{sect:emo-computable-reducibility} contains a $\Pi^{0,R}_1$ property
	ensuring extensibility. Since the tournament $R$ depends on the previous iteration which is being constructed,
	we have only access to a finite part of~$R$. We need therefore to ensure that whatever the extension of the finite
	tournament is, the condition will be extendible.
\end{itemize}

We shall address the above-mentioned problems one at a time
in subsections~\ref{subsect:enum-emo-infinite} and~\ref{subsect:emo-new-condition}.

\subsection{Enumerating the infinite tournaments}\label{subsect:enum-emo-infinite}

In section~\ref{sect:emo-computable-reducibility}, we were also confronted to the problem
of enumerating all infinite tournaments
and solved it by relativizing the construction to a low subuniform degree
in order to obtain a low sequence of infinite tournaments containing at least
every infinite computable tournament. We cannot apply the same
trick to handle the construction of an $\omega$-model of~$\emo$
as solutions to some computable tournaments may bound new tournaments
and so on. However, as we shall see, we can restrict ourselves
to primitive recursive tournaments to generate an $\omega$-model of~$\emo$.

Given a sequence of sets $X_0, X_1, \dots$, define $\Mcal_{\vec{X}}$ to be the $\omega$-structure
whose second-order part is the Turing ideal generated by $\vec{X}$, that is,
$$
\{ Y \in 2^\omega : (\exists i)[ Y \leq_T X_0 \oplus \dots \oplus X_i ]\}
$$

\begin{lemma}\label{lem:em-uniform-model}
There exists a uniformly computable sequence of infinite, primitive recursive tournament functionals
$T_0, T_1, \dots$ such that for every sequence of sets $X_0, X_1, \dots$ such that
$X_i$ is an infinite transitive subtournament of $T_i^{X_0 \oplus \dots \oplus X_{i-1}}$
for each $i \in \omega$,
$$
\Mcal_{\vec{X}} \not \models \amt \imp \Mcal_{\vec{X}} \models \emo \wedge \coh
$$
\end{lemma}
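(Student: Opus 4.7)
The plan is to exploit the cited reduction $\emo \imp \sts(2) \vee \coh$ together with the implication $\sts(2) \imp \amt$ from~\cite{Patey2015Somewhere}: once $\Mcal_{\vec X} \models \emo$ is secured unconditionally, any such $\Mcal_{\vec X}$ failing $\amt$ automatically satisfies $\coh$, since $\Mcal_{\vec X} \models \emo$ forces either $\sts(2)$ or $\coh$, and the former implies $\amt$. The task therefore reduces to designing $(T_j)_{j \in \omega}$ so that $\Mcal_{\vec X} \models \emo$ for every admissible $\vec X$, i.e., so that every infinite tournament $R \in \Mcal_{\vec X}$ has an infinite transitive subtournament inside $\Mcal_{\vec X}$.

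I would use a computable pairing $j = \langle e, k \rangle$ chosen with $k < j$ for every pair, and define $T_j^Z$ by first primitively recursively projecting the oracle $Z = X_0 \oplus \dots \oplus X_{j-1}$ onto $Y = X_0 \oplus \dots \oplus X_{k-1}$ and then applying a fixed primitive-recursive-in-$Y$ tournament functional $T^{(e), Y}$ designed to reflect the Turing functional $\Phi_e^Y$. The natural candidate, for $x < y$, is
\[
T^{(e), Y}(x,y) = \Phi_{e, b(x,y)}^Y(x, y) \text{ if this converges to } 0 \text{ or } 1, \text{ and } 1 \text{ otherwise,}
\]
for some primitive recursive time bound $b$, with $T^{(e), Y}(y,x) = 1 - T^{(e), Y}(x,y)$. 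This makes each $T_j^Z$ a total infinite primitive-recursive-in-$Z$ tournament on $\omega$, uniformly in $j$, which handles the syntactic requirements of the lemma.

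The heart of the proof, and the main obstacle, is to show that whenever $\Phi_e^Y$ is a total infinite tournament, every infinite transitive $T^{(e), Y}$-subtournament $H$ uniformly $(Y \oplus H)$-computes an infinite transitive $\Phi_e^Y$-subtournament $H' \subseteq H$. The natural extraction enumerates $H$ in order and greedily adds $h_n$ to $H'$ precisely when the (now total, hence $Y$-computable) value $\Phi_e^Y(h_i, h_n)$ agrees with $T^{(e), Y}(h_i, h_n)$ for every previously selected $h_i$; transitivity of $H'$ for $\Phi_e^Y$ follows automatically. The delicate point is to guarantee that $H'$ is infinite, because no single primitive recursive $b$ can dominate the convergence stages of an arbitrary total Turing functional. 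I would address this by leveraging the iterative structure: the sequence $(T_j)_j$ contains infinitely many indices devoted to each pair $(e,k)$ (using different bounds $b$ and allowing further projections at higher levels $\langle e', j \rangle$), so that a failure of the extraction at one level is corrected by an EM-solution at a subsequent level, and the resulting sparse subsequence, obtained by concatenating partial extractions across several $X_{j'}$, lives in $\Mcal_{\vec X}$ and is transitive for $\Phi_e^Y$.

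Granted the extraction property, verification is immediate: any infinite tournament $R \in \Mcal_{\vec X}$ equals $\Phi_e^{X_0 \oplus \dots \oplus X_{k-1}}$ for some $e,k$, and at level $j = \langle e, k \rangle$ the set $X_j$ is an infinite transitive $T^{(e), X_0 \oplus \dots \oplus X_{k-1}}$-subtournament, so that $X_j \oplus X_0 \oplus \dots \oplus X_{k-1} \in \Mcal_{\vec X}$ computes an infinite transitive $R$-subtournament lying in $\Mcal_{\vec X}$. This yields $\Mcal_{\vec X} \models \emo$ for every admissible $\vec X$, and combining with the cited reduction delivers $\Mcal_{\vec X} \models \coh$ under the hypothesis $\Mcal_{\vec X} \not\models \amt$, completing the proof of the lemma.
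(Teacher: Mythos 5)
Your overall shape — establish $\Mcal_{\vec X}\models\emo$ and then invoke $\emo\imp\sts(2)\vee\coh$ together with $\sts(2)\imp\amt$ to obtain $\coh$ under $\neg\amt$ — is half right: that disjunction is indeed how the paper gets cohesiveness conditionally. But the first half, establishing $\emo$ unconditionally, is where the argument breaks, and the paper deliberately does not attempt it.

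The gap is in your coding of arbitrary $\Phi_e^Y$ into primitive recursive tournaments $T^{(e),Y}$ by a time bound $b$, followed by extraction. When $\Phi_e^Y$ is total but not dominated by $b$, the tournament $T^{(e),Y}$ differs from $\Phi_e^Y$ on infinitely many pairs, and a transitive subtournament $H$ of $T^{(e),Y}$ need contain no infinite $\Phi_e^Y$-transitive set: the greedy extraction you describe can stall after finitely many elements, and nothing forces it to recover. Your proposed remedy — retrying at higher levels $\langle e', j\rangle$ with other bounds and ``projections'' — is not a construction; there is no mechanism by which a failed extraction from $X_j$ gets repaired by applying another time-bounded tournament at a later level, and no invariant guaranteeing the pieces concatenate into an infinite $\Phi_e^Y$-transitive set living in $\Mcal_{\vec X}$. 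In fact if such a coding existed, one would be close to a universal $\emo$-instance, which the paper explicitly notes is not known to exist. The paper sidesteps this entirely by using $\rca\vdash\semo\wedge\coh\imp\emo$: it only needs to code \emph{stable} tournaments into primitive recursive ones, and for stable $R$ the limiting coloring $\tilde f$ is $\Delta^0_2$, so the Shoenfield limit lemma gives an honest $X$-primitive-recursive approximation $g$, and the induced p.r.\ tournament faithfully reduces $R$. Cohesiveness is then obtained \emph{directly} by a second family of p.r.\ tournaments coming from the $\emo\imp[\sts(2)\vee\coh]$ construction of~\cite{Patey2015Somewhere}, whose transitive subtournaments compute either a cohesive set or an atomic model; under $\neg\amt$ the latter branch is impossible, so one always gets cohesiveness. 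Without the reduction to $\semo$, your proof of $\Mcal_{\vec X}\models\emo$ does not go through.
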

\begin{proof}
As $\rca \vdash \semo \wedge \coh \imp \emo$,
it suffices to prove that for every set $X$, 
\begin{itemize}
	\item[(i)] for every stable, infinite, $X$-computable tournament~$R$, there exists an infinite $X$-p.r.\ tournament~$T$
	such that every infinite $T$-transitive subtournament $X$-computes an infinite $R$-transitive subtournament.
	\item[(ii)] for every $X$-computable complete atomic theory~$T$ and every
uniformly $X$-computable sequence of sets $\vec{R}$, there exists an infinite $X$-p.r.\ tournament
such that every infinite transitive subtournament $X$-computes either an $\vec{R}$-cohesive set
or an atomic model of~$T$.
\end{itemize}

(i)
Fix a set $X$ and a stable, infinite, $X$-computable tournament~$R$.
Let $\tilde{f} : \omega \to 2$ be the $X'$-computable function defined by $\tilde{f}(x) = 0$
if $(\forall^\infty s) R(s, x)$ and $\tilde{f}(x) = 1$ if $(\forall^\infty s) R(x, s)$.
By Schoenfield's limit lemma~\cite{Shoenfield1959degrees}, there exists an $X$-p.r.
function $g : \omega^2 \to 2$ such that $\lim_s g(x, s) = \tilde{f}(x)$ for every $x \in \omega$.
Considering the $X$-p.r. tournament $T$ such that $T(x,y)$ holds iff $x < y$ and $g(x, y) = 1$
or $x > y$ and $g(x, y) = 0$, every infinite $T$-transitive subtournament $X$-computes an infinite
$R$-transitive subtournament.

(ii) Jockusch and Stephan proved in~\cite{Jockusch1993cohesive} that for every set~$X$,
and every uniformly $X$-computable sequence of sets~$\vec{R}$, 
every p-cohesive set relative to~$X$ computes an $\vec{R}$-cohesive set.
The author proved in~\cite{Patey2015Somewhere} that for every $X$-computable complete atomic theory~$T$,
there exists an $X'$-computable coloring $f : \omega \to \omega$ such that every infinite
set $Y$ \emph{thin} for $f$ (i.e.\ such that $f(Y) \neq \omega$) $X$-computes an atomic model of~$T$.
He also proved that for every such $X'$-computable coloring $f : \omega \to \omega$,
there exists an infinite, $X$-p.r. tournament~$R$ such that every infinite transitive subtournament
is either p-cohesive, or $X$-computes an infinite set thin for~$f$.
\end{proof}

We can therefore fix this computable enumeration $T_0, T_1, \dots$
of tournament functionals, and make $\iniop(n)$ return an empty condition
paired with $T_n$. Thus, taking at each iteration an infinite set satisfying one of the parts,
we obtain an $\omega$-model of~$\emo$.

\subsection{The new Erd\H{o}s-Moser conditions}\label{subsect:emo-new-condition}

Fix some primitive recursive tournament functional $R$.
According to the analysis of the Erd\H{o}s-Moser presented in section~\ref{sect:emo-computable-reducibility},
we would like to define the forcing conditions to be tuples~$(\vec{F}, \Ccal)$ where
\begin{itemize}
	\item[(a)] $\Ccal$ is a non-empty $\Pi^{0,D}_1$ $k$-cover class of $[t, +\infty)$ 
	for some $k, t \in \omega$
	\item[(b)] $F_\nu \cup \{x\}$ is $R^D$-transitive for every $Z_0 \oplus \dots \oplus Z_{k-1} \in \Ccal$,
	every $x \in Z_\nu$ and each $\nu < k$
	\item[(c)] $Z_\nu$ is included in a minimal $R^D$-interval of $F_\nu$
	for every $Z_0 \oplus \dots \oplus Z_{k-1} \in \Ccal$ and each~$\nu < k$.
\end{itemize}

However, at a finite stage, we have only access to a finite part of~$D$,
and therefore we cannot express the properties (a-c). Indeed,
we may have made some choices about the $F$'s such that $F_\nu \cup \{x\}$
is not $R^D$-transitive for every part~$\nu$, every $D$ satisfying the previous iterations and cofinitely many $x \in \omega$.
We need therefore to choose the $F$'s carefully enough so that whatever the extension of the finite tournament
to which we have access, we will be able to extend at least one of the $F$'s.

The initial condition $(\{\emptyset\}, \{\omega\})$ satisfies the properties (a-c) no matter what $D$ is, since $\{\omega\}$
does not depend on $D$.
Let us have a closer look at the question $Q2$ asked in section~\ref{sect:emo-computable-reducibility}.
For the sake of simplification, we will consider that the question is asked below the unique 
part of the initial condition. It therefore becomes:

\smallskip
{\itshape
Q3: Is there a finite set $E \subseteq \omega$ such that for every 2-partition $\tuple{E_0, E_1}$ of~$E$,
there exists an $R^D$-transitive subset $F_1 \subseteq E_i$ for some $i < 2$ such that $\varphi(D, F_1)$ holds?
}
\smallskip

Notice that this is a syntactic question since it depends on the purely formal variable $D$
representing the effective join of the sets constructed in the previous iterations.
Thanks to the usual query process, we are able to transform it into a concrete $\Sigma^0_1$
formula getting rid of the formal parameter~$D$, 
and obtain some answer that the previous layers guarantee to hold for every set $D$
satisfying the previous iterations.

If the answer is negative, then by compactness, for every set $D$ satisfying the previous iterations,
there is a 2-partition $Z_0 \cup Z_1 = \omega$ such that for every $i < 2$ and every $R^D$-transitive
subset $G \subseteq Z_i$, $\varphi(D, G)$ does not hold. For every set $D$,
the $\Pi^{0,D}_1$ class $\Ccal$ of such 2-partitions $Z_0 \oplus Z_1$ is therefore guaranted to be non-empty.
Note again that since $D$ is a syntactic variable, the class $\Ccal$ is also syntactic,
and purely described by finite means.

If the answer is positive, then we are given some finite set $E \subseteq \omega$ witnessing it.
Moreover, we are guaranted that for every set $D$ satisfying the previous iterations,
for every 2-partition $\tuple{E_0, E_1}$ of~$E$, there exists an $R^D$-transitive subset 
$F_1 \subseteq E_i$ for some $i < 2$ such that $\varphi(D, F_1)$ holds.
In we knew the set $D$, we would choose one ``good'' 2-partition $\tuple{E_0, E_1}$
as we do in section~\ref{sect:emo-computable-reducibility}. However, this choice
depends on infinitely many bits of information of $D$. We will need therefore to
try every 2-partition in parallel.

There is one more difficulty.
With this formulation, we are not able to find the desired extension, since $D$ is syntactic,
and therefore we do not know how to identify the color~$i$ and the actual set~$F_1$ given some 2-partition $\tuple{E_0, E_1}$.
Thankfully, we can slightly modify the question to ask to provide the witness $F_1$ for each such a partition
in the answer.

\smallskip
{\itshape
Q4: Is there a finite set $E \subseteq \omega$ and a finite function $g$ such that for every 2-partition $\tuple{E_0, E_1}$ of~$E$,
$g(\tuple{E_0, E_1}) = F_1$ for some $i < 2$ and some $R^D$-transitive subset $F_1 \subseteq E_i$ such that $\varphi(D, F_1)$ holds?
}
\smallskip

The question $Q4$ is equivalent to the question $Q3$, but provides a constructive witness~$g$
in the case of a positive answer as well. We can even formulate the question so that we know the relation $R^D$
over the set~$F_1$. Thus we are able to talk about minimal $R^D$-intervals of $F_1$.

Now, we can extend the initial condition $(\{\emptyset\}, \{\omega\})$
into some condition $(\vec{F}, \Ccal)$ as follows:
For each 2-partition $\tuple{E_0, E_1}$ of $E$, letting $F_1 = g(\tuple{E_0, E_1})$, 
for every minimal $R^D$-interval $I$, we create a part~$\nu = \tuple{E_0, E_1, I}$
and set $F_\nu = F_1$. Take some $t' > max(\vec{F})$
and let $\Ccal$ be the $\Pi^{0,D}_1$ class of covers $\bigoplus_\nu Z_\nu$ of $[t', +\infty)$ such that
for every part $\nu = \tuple{E_0, E_1, I}$
\begin{itemize}
	\item[(b')] $F_\nu \cup \{x\}$ is $R^D$-transitive for every $x \in Z_\nu$
	\item[(c')] $Z_\nu$ is included in the minimal $R^D$-interval $I$
\end{itemize}
Fix some set $D$ satisfying the previous iterations. We claim that $\Ccal$ is non-empty.
Any element $x \in [t', +\infty)$ induces a 2-partition $g(x) = \tuple{E_0, E_1}$ of $E$ by
setting $E_0 = \{ y \in E : R^D(y, x) \}$ and $E_1 = \{y \in E : R^D(x, y)\}$.
On the other hand, for every 2-partition $\tuple{E_0, E_1}$ of $E$,
we can define a partition of $[t', +\infty)$ by setting
$Z_{\tuple{E_0, E_1}} = \{ x \in [t',+\infty) : g(x) = \tuple{E_0, E_1} \}$.
By definition, $E_0 \to_{R^D} Z_{\tuple{E_0, E_1}}  \to_{R^D} E_1$.
Therefore, the cover $\bigoplus_\nu Z_\nu$ of $[t', +\infty)$ defined by
\[
Z_\nu = \cond{
	Z_{\tuple{E_0, E_1}} & \mbox{ if } \nu = \tuple{E_0, E_1, I}, I = (max(F_\nu), +\infty) \mbox{ and } F_\nu \subseteq E_0\\
	Z_{\tuple{E_0, E_1}} & \mbox{ if } \nu = \tuple{E_0, E_1, I}, I = (-\infty, min(F_\nu)) \mbox{ and } F_\nu \subseteq E_1\\
	\emptyset & \mbox{ otherwise}\\
}
\]
is in $\Ccal$ and witnesses the non-emptiness of $\Ccal$.

The problem of having access to only a finite part of the class~$\Ccal$ appears
more critically when considering the question below some part~$\nu$ of an arbitrary condition $c = (\vec{F}, \Ccal)$.
The immediate generalization of the question $Q4$ is the following.

\smallskip
{\itshape
Q5: For every cover $X_0 \oplus \dots \oplus X_{k-1} \in \Ccal$, is there a finite set $E \subseteq X_\nu$ and a finite function~$g$ such that for every 2-partition $\tuple{E_0, E_1}$ of~$E$,
$g(\tuple{E_0, E_1})$ is a finite $R^D$-transitive subset of some $E_j$ such that $\varphi(D, F_\nu \cup g(\tuple{E_0, E_1}))$ holds?
}
\smallskip

As usual, although this question is formulated in a $\Pi^0_2$ manner, it can be turned into a $\Sigma^{0,D}_1$ query 
using a compactness argument.

\smallskip
{\itshape
Q5': Is there some $r \in \omega$, a finite sequence of finite sets $E^0, \dots, E^{r-1}$ 
and a finite sequence of functions $g^0, \dots, g^{r-1}$ such that
\begin{itemize}
	\item[(1)] for every $X_0 \oplus \dots \oplus X_{k-1} \in \Ccal$, there is some $i < r$ such that $E^i \subseteq X_\nu$
	\item[(2)] for every $i < r$ and every 2-partition $\tuple{E_0, E_1}$ of~$E^i$,
	$g^i(\tuple{E_0, E_1})$ is a finite $R^D$-transitive subset of some $E_j$ such that $\varphi(D, F_\nu \cup g^i(\tuple{E_0, E_1}))$ holds?
\end{itemize}
}
\smallskip

In the case of a negative answer, we can apply the standard procedure consisting in refining the $\Pi^{0,D}_1$ class $\Ccal$
into some $\Pi^{0,D}_1$ class~$\Dcal$ forcing $\varphi(D, G)$ not to hold on every part refining the part~$\nu$ in $c$.
The class $\Dcal$ is non-empty since we can construct a member of it from a witness of failure of $Q5$.
The problem appears when the answer is positive. We are given some finite sequence $E^0, \dots, E^{r-1}$ 
and a finite sequence of functions $g^0, \dots, g^{r-1}$ satisfying (i) and (ii).
For every $D$, there is some $X_0 \oplus \dots \oplus X_{k-1} \in \Ccal$ and some $i < r$
such that $E^i \subseteq X_\nu$, but this $i$ may depend on~$D$. We cannot choose some~$E^i$
as we used to do in section~\ref{sect:emo-computable-reducibility}.

Following our moto, if we are not able to make a choice, we will try every possible case in parallel.
The idea is to define a condition $d = (\vec{E}, \Dcal)$ and a refinement function $f$ forking the part~$\nu$ into various parts, each one representing a possible scenario. For every part $\mu$ of $c$ which is different from $\nu$, 
create a part $\mu$ in $d$ and set $E_\mu = F_\mu$. For every $i < r$ and every 2-partition $\tuple{E_0, E_1}$ of $E^i$,
create a part~$\mu = \tuple{i, E_0, E_1}$ in $d$ refining $\nu$ and set $E_\mu = F_\nu \cup g^i(\tuple{E_0, E_1})$.
Accordingly, let $\Dcal$ be the $\Pi^{0,D}_1$ class of covers $\bigoplus_\mu Y_\mu$ of $[t, +\infty)$
such that there is some $i < r$ and some cover $X_0 \oplus \dots \oplus X_{k-1} \in \Ccal$ satisfying first
$E^i \subseteq X_\nu$, second $Y_\mu \subseteq X_{f(\mu)}$ for each part~$\mu$ of $d$
and third $Y_\mu = \emptyset$ if $\mu = \tuple{j, E_0, E_1}$ for some $j \neq i$.

The class $\Dcal$ $f$-refines $\Ccal$, but does not $f$-refine $\Ccal^{[\nu, E^i]}$ for some fixed $i < r$.
Because of this, the condition $d$ does not extends the condition $c$ in the sense of section~\ref{sect:emo-computable-reducibility}.
We shall therefore generalize the operator $\cdot \mapsto \Ccal^{[\nu, \cdot]}$ to define it over tuples of sets.

\smallskip
\emph{Restriction of a cover class}. Given some cover class $(k, Y, \Ccal)$,
some part~$\nu$ of $\Ccal$ and some $r$-tuple $E^0, \dots, E^{r-1}$ of finite sets, we denote by $\Ccal^{[\nu, \vec{E}]}$
the cover class $(k+r-1, Y, \Dcal)$ such that $\Dcal$ is the collection of 
\[
X_0 \oplus \dots \oplus X_{\nu-1} \oplus Z_0 \oplus \dots \oplus Z_{r-1} \oplus X_{\nu+1} \oplus \dots \oplus X_{k-1}
\]
such that $X_0 \oplus \dots \oplus X_{k-1} \in \Dcal$ and there is some $i < r$ such that
$E^i \subseteq X_\nu$, $Z_i = X_\nu$ and $Z_j = \emptyset$ for every $j \neq i$.
In particular, $\Ccal^{[\nu, \vec{E}]}$ refines $\Ccal$ with some refinement function $f$ which forks the part~$\nu$
into~$r$ different parts. Such a function $f$ is called the \emph{refinement function witnessing the restriction}.

We need to define the notion of extension between conditions accordingly.
A condition $d = (\vec{E}, \Dcal)$ \emph{extends} a condition $c = (\vec{F}, \Ccal)$
(written $d \leq c$) if there is a function $f : parts(\Dcal) \to parts(\Ccal)$ such that the following holds:
\begin{itemize}
	\item[(i)] $(E_\nu, dom(\Dcal))$ Mathias extends $(F_{f(\nu)}, dom(\Ccal))$ for each $\nu \in parts(\Dcal)$ 
	\item[(ii)] Every $\bigoplus_\mu Y_\mu \in \Dcal$ $f$-refines some $\bigoplus_\nu X_\nu \in \Ccal$
	such that for each part~$\mu$ of $d$, either $E_\mu \setminus H_{f(\mu)} \subseteq X_{f(\mu)}$,
	or $Y_\mu = \emptyset$.
\end{itemize}

Note that this notion of extension is coarser than the one defined in section~\ref{sect:emo-computable-reducibility}.
Unlike with the previous notion of extension, there may be from now on some part~$\mu$ of $d$ refining the part~$\nu$ of $c$,
such that $(E_\mu, Y_\mu)$ does not Mathias extend $(F_\nu, X_\nu)$ for some $\bigoplus_\mu Y_\mu \in \Dcal$
and every $\bigoplus_\nu X_\nu \in \Ccal$, but in this case, we make $(E_\mu, Y_\mu)$ non-extendible by ensuring that $Y_\mu = \emptyset$.

\subsection{Implementing the Erd\H{o}s-Moser module}

We are now ready to provide a concrete implementation of a module support and a module for~$\emo$.
Define the tuple $\Sb^{\emo} = \tuple{\Pb, \Ub, \parop, \iniop, \satop}$ as follows:
$\Pb$ is the collection of all conditions~$(\vec{F}, \Ccal, R)$ where $R$ is a primitive recursive
tournament functional and
\begin{itemize}
	\item[(a)] $\Ccal$ is a non-empty $\Pi^{0,D}_1$ $k$-cover class of $[t, +\infty)$ 
	for some $k, t \in \omega$
	\item[(b)] $F_\nu \cup \{x\}$ is $R^D$-transitive for every $Z_0 \oplus \dots \oplus Z_{k-1} \in \Ccal$,
	every $x \in Z_\nu$ and each $\nu < k$
	\item[(c)] $Z_\nu$ is included in a minimal $R^D$-interval of $F_\nu$
	for every $Z_0 \oplus \dots \oplus Z_{k-1} \in \Ccal$ and each~$\nu < k$.
\end{itemize}
Once again, $\Ccal$ is actually a $\Pi^{0,D}_1$ formula denoting a non-empty $\Pi^{0,D}_1$ class.
A condition $d = (\vec{E}, \Dcal, T)$ \emph{extends} 
$c = (\vec{F}, \Ccal, R)$
(written $d \leq c$) if $R = T$ and
there exists a function $f : parts(\Dcal) \to parts(\Ccal)$ such that the properties (i) and (ii)
mentioned above hold.

Given some condition $c = (\vec{F}, \Ccal, R)$,
$\parop(c) = \{\tuple{c, \nu} : \nu \in parts(\Ccal)\}$.
Define $\Ub$ as $\bigcup_{c \in \Pb} \parop(c)$, that is, the set of all pairs $\langle (\vec{F}, \Ccal, R), \nu \rangle$
where $\nu \in parts(\Ccal)$. The operator $\iniop(n)$ returns the condition $(\{\emptyset\}, \{\omega\}, R_n)$
where $R_n$ is the $n$th primitive recursive tournament functional.
Last, define $\satop(\tuple{c, \nu})$ to be the collection of all $R^D$-transitive subtournaments
satisfying the Mathias precondition $(F_\nu, X_\nu)$ where $X_\nu$ is \emph{non-empty} for some $\bigoplus_\nu X_\nu \in \Ccal$.
The additional non-emptiness requirement of $X_\nu$ in the definition of the $\satop$ operator
enables us to ``disable'' some part by setting $X_\nu = \emptyset$. Without this requirement,
the property (i) of a module support would not be satisfied. Moreover, since every cover class has an
acceptable part, there is always one part~$\nu$ in $\Ccal$ such that $\satop(\tuple{c,\nu}) \neq \emptyset$.

\begin{lemma}
The tuple $\Sb^{\emo}$ is a module support.
\end{lemma}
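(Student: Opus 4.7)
The plan is to verify the five clauses of the definition of a module support one by one, most of which are direct from the definitions, and then to establish the nontrivial coherence property (a) relating the refinement function of an extension to the containment of satisfaction classes.

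First, I would check the bookkeeping clauses. The domain $\Pb$ is well-defined since every condition $(\vec{F},\Ccal,R)$ carries an explicit $\Pi^{0,D}_1$ index for $\Ccal$ and a primitive recursive index for $R$; the set $\parop(c)$ is finite because $\Ccal$ is a $k$-cover class for some concrete $k$, and the finite list $\{\tuple{c,\nu} : \nu < k\}$ is read off computably from the code of $c$. The operator $\iniop(n)=(\{\emptyset\},\{\omega\},R_n)$ is computable because the enumeration of primitive recursive tournament functionals from Lemma~\ref{lem:em-uniform-model} is computable; and this output is a valid condition because $\{\omega\}$ is a $\Pi^{0,D}_1$ $1$-cover class of $[0,+\infty)$ independent of $D$, (b) is vacuously satisfied since $\emptyset\cup\{x\}=\{x\}$ is trivially $R^D$-transitive, and (c) holds because $\omega$ is the unique (hence minimal) $R^D$-interval of the empty tournament. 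The definition of $\satop$ gives a set of reals for each part, so clause (5) is satisfied.

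Next I would verify that $(\Pb,\leq)$ is a partial order. Reflexivity follows by taking $f=\mathrm{id}$ on $\parop(c)$: clause (i) is immediate and clause (ii) holds because every cover in $\Ccal$ trivially $\mathrm{id}$-refines itself with $E_\nu\setminus F_{f(\nu)}=\emptyset\subseteq X_\nu$. For transitivity, if $d\leq_f c$ and $e\leq_g d$, I would take $f\circ g$ as the refinement witness: the Mathias extension in (i) composes (the reservoirs are the fixed $dom(\cdot)$ sets, and $F_{f(g(\nu))}\subseteq E_{g(\nu)}\subseteq H_\nu$ with $H_\nu\setminus F_{f(g(\nu))}$ a subset of $dom(\Ccal)$), and for (ii), given $\bigoplus_\nu W_\nu\in\Ecal$, first apply $g$-refinement to get a cover $\bigoplus_\mu Y_\mu\in\Dcal$ with $W_\nu\subseteq Y_{g(\nu)}$ or $W_\nu=\emptyset$, then apply $f$-refinement to $\bigoplus_\mu Y_\mu$ to land in some $\bigoplus X\in\Ccal$; the two emptiness alternatives combine so that either $W_\nu=\emptyset$, or both $H_\nu\setminus F_{f(g(\nu))}\subseteq X_{f(g(\nu))}$ and $W_\nu\subseteq X_{f(g(\nu))}$. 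Antisymmetry is not required in this framework and does not need to be addressed.

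Finally, and this is the only step with real content, I would verify clause (a): for $d=(\vec{E},\Dcal,R)\leq_f c=(\vec{F},\Ccal,R)$ and each part $\nu$ of $d$, $\satop(\tuple{d,\nu})\subseteq\satop(\tuple{c,f(\nu)})$. Fix $G\in\satop(\tuple{d,\nu})$. By definition there is a cover $\bigoplus_\mu Y_\mu\in\Dcal$ with $Y_\nu\neq\emptyset$ such that $E_\nu\subseteq G$ and $G\setminus E_\nu\subseteq Y_\nu$, and $G$ is $R^D$-transitive. Applying clause (ii) of the extension relation to this cover gives some $\bigoplus_\xi X_\xi\in\Ccal$ witnessing the $f$-refinement; because $Y_\nu\neq\emptyset$, the ``emptiness alternative'' for part $\nu$ is ruled out, so $E_\nu\setminus F_{f(\nu)}\subseteq X_{f(\nu)}$ and $Y_\nu\subseteq X_{f(\nu)}$. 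Combined with clause (i), we get $F_{f(\nu)}\subseteq E_\nu\subseteq G$ and $G\setminus F_{f(\nu)}=(E_\nu\setminus F_{f(\nu)})\cup(G\setminus E_\nu)\subseteq X_{f(\nu)}$, with $X_{f(\nu)}\supseteq Y_\nu\neq\emptyset$, witnessing $G\in\satop(\tuple{c,f(\nu)})$. The main subtlety here is precisely the reason for requiring the non-emptiness of $X_\nu$ in the definition of $\satop$: without it, a part $\mu$ of $d$ could refine a part of $c$ whose reservoir has been silently collapsed to $\emptyset$ by clause (ii), breaking the satisfaction inclusion. Every other step is routine bookkeeping tracking how the formal parameter $D$ propagates through the $\Pi^{0,D}_1$ cover classes.
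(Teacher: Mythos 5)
Your proposal is correct and, on the substantive point, takes the same route as the paper. The paper's own proof addresses only property~(a), treating the bookkeeping clauses (computability of $\parop$ and $\iniop$, that $\leq_\Pb$ is a partial order) as implicit; your verification of (a) --- extract the witnessing cover with $Y_\nu\neq\emptyset$, use the non-emptiness to rule out the ``disabled part'' alternative in clause~(ii) of the extension relation, then chase the inclusions to show $G$ satisfies the Mathias precondition $(F_{f(\nu)},X_{f(\nu)})$ with $X_{f(\nu)}\neq\emptyset$ --- is step-for-step the paper's argument. The extra work you do (checking that $\iniop(n)$ yields a valid condition and that $\leq_\Pb$ is reflexive and transitive, in particular the two-case analysis combining the emptiness alternatives in the transitivity argument) is sound and fills gaps the paper leaves to the reader, so it is a slightly more complete rendering of the same proof rather than a different approach.
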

\begin{proof}
We must check that if~$d \leq_\Pb c$ for some~$c, d \in \Pb$, then there is a function~$g : \parop(d) \to \parop(c)$
such that $\satop(\nu) \subseteq \satop(g(\nu))$ for each~$\nu \in \parop(d)$.
Let $d = (\vec{E}, \Dcal, R)$ and $c = (\vec{F}, \Ccal, R)$ be such that $d \leq_\Pb c$.
By definition, there is a function $f : parts(\Dcal) \to parts(\Ccal)$ satisfying
the properties (i-ii). Let $g : \parop(d) \to \parop(c)$ be defined by $g(\tuple{d, \nu}) = \tuple{c, f(\nu)}$.
We claim that $g$ is a refinement function witnessing $d \leq_\Pb c$.
Let $G$ be any set in $\satop(\tuple{d, \nu})$. We will prove that $G \in \satop(\tuple{c, f(\nu)})$.
The set $G$ is an $R^D$-transitive subtournament
satisfying the Mathias condition $(E_\nu, X_\nu)$ where $X_\nu \neq \emptyset$ for some $\bigoplus_\nu X_\nu \in \Dcal$.
By (ii), since $X_\nu$ is non-empty, there is some $\bigoplus_\mu Y_\mu \in \Ccal$ 
such that $E_\nu \setminus F_{f(\nu)} \subseteq Y_{f(\nu)}$
and $X_\nu \subseteq Y_{f(\nu)}$. It suffices to show that $(F_\nu, X_\nu)$
Mathias extends $(F_{f(\nu)}, Y_{f(\nu)})$ to deduce that $G$ satisfies the Mathias condition $(F_{f(\nu)}, Y_{f(\nu)})$
and finish the proof. By (i), $F_{f(\nu)} \subseteq E_\nu$.
Since $E_\nu \setminus F_{f(\nu)} \subseteq Y_{f(\nu)}$ and $X_\nu \subseteq Y_{f(\nu)}$, we are done.
\end{proof}

We next define an implementation of the module $\Mb^{\emo} = \langle \Sb^{\emo}, \boxop, \unboxop, \progop \rangle$ as follows.
Given some condition $c = (\vec{F}, \Ccal, R)$, some $\nu \in parts(\Ccal)$ and some
$\Sigma^0_1$ formula $\varphi(D, G)$, $\unboxop(\tuple{c,\nu}, \varphi)$ returns the $\Sigma^0_1$ formula $\psi(D)$ which holds
if there is a finite sequence of finite sets $E^0, \dots, E^{r-1}$ and a finite sequence of functions $g^0, \dots, g^{r-1}$ such that
\begin{itemize}
	\item[(1)] for every $X_0 \oplus \dots \oplus X_{k-1} \in \Ccal$, there is some $i < r$ such that $E^i \subseteq X_\nu$
	\item[(2)] for every $i < r$ and every 2-partition $\tuple{E_0, E_1}$ of~$E^i$,
	$g^i(\tuple{E_0, E_1})$ is a finite $R^D$-transitive subset of some $E_j$ such that $\varphi(D, F_\nu \cup g^i(\tuple{E_0, E_1}))$ holds.
\end{itemize}

If the answer to $\psi(D)$ is $\tuple{\no}$, $\unboxop(\tuple{c,\nu}, \tuple{\no})$ returns
the tuple~$\tuple{d, f, b}$ where $d = (\vec{E}, \Dcal, R)$ is a condition such that $d \leq_f c$
and defined as follows. For every part $\mu \neq \nu$ of $c$, create
a part~$\mu$ in $d$ and set $E_\mu = F_\mu$. Furthermore, fork the part~$\nu$ into two parts $\nu_0$
and $\nu_1$ in $d$ and set $E_{\nu_i} = F_\nu$ for each~$i < 2$. 
Define $\Dcal$ to be the $\Pi^{0,D}_1$ class of all covers $\bigoplus_\mu Y_\mu$
$f$-refining some cover $\bigoplus_\nu X_\nu \in \Ccal$ and such that for every $i < 2$
and every finite $R^D$-transitive set $E \subseteq Y_{\nu_i}$, $\varphi(D, F_\nu \cup E)$ does not hold.
Moreover, $b : \parop(c) \to \ansop[D,G]$ is the constant function $\tuple{\no}$.

Suppose now that the answer to $\psi(D)$ is $a = \tuple{\yes, r, E^0, \dots, E^{r-1}, f^0, \dots, f^{r-1}, a'}$
where $a'$ is a function which on every $i < r$ and every 2-partition $\tuple{E_0, E_1} = i$,
returns an answer to $\varphi(D, F_\nu \cup g^i(\tuple{E_0, E_1}))$.
The function $\unboxop(\tuple{c,\nu}, a)$ returns the tuple~$\tuple{d, f, b}$ where $d$ is a condition such that $d \leq_f c$
and whose definition has been described in subsection~\ref{subsect:emo-new-condition}.
The function $b : \parop(d) \to \ansop[D, G]$ returns on every part~$\mu = \tuple{i, E_0, E_1}$ 
the tuple $\tuple{\yes, a'(i, E_0, E_1)}$.

Last, given some condition~$c = (\vec{F}, \Ccal, R)$ and some $\nu \in parts(\Ccal)$, 
$\progop(\tuple{c,\nu})$ is the query $\varphi(D, G) = (\exists n)[n \in G \wedge n > max(F_\nu)]$.
Note that we cannot force $\neg \varphi(D, G)$ on every part~$\tuple{c,\nu}$,
since every cover class has an acceptable part. Applying the query lemma infinitely many times
on the progress operator ensures that if we take any path through the infinite tree of the acceptable parts,
the resulting $R^D$-transitive subtournament will be infinite.

\begin{lemma}
The tuple $\Mb^{\emo}$ is a module.
\end{lemma}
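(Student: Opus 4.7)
The plan is to verify each clause of the module definition for $\Mb^{\emo}$. Since $\Sb^{\emo}$ has already been checked to be a module support, and $\boxop$ and $\progop$ are computable directly from inspection of their definitions (with $\progop(\tuple{c,\nu})$ trivially producing a $\Sigma^0_1$ query in $D,G$), the real content is to verify that $\unboxop$ returns a valid condition extension with the right forcing behavior. Computability of $\unboxop$ reduces to observing that the case split on the shape of the answer is effective, the construction of the forked condition is a finite manipulation, and the answer function $b$ is obtained by reading off the witnesses packaged in the input answer.

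For the negative branch, where $\unboxop(\tuple{c,\nu}, \tuple{\no})$ returns $\tuple{d,f,b}$ with $f$ forking $\nu$ into $\nu_0,\nu_1$, the nontrivial fact is that $\Dcal$ is a non-empty $\Pi^{0,D}_1$ class. The failure of $\psi(D)$ together with a compactness argument, analogous to the one carried out in Section~\ref{sect:emo-computable-reducibility} when transforming question {\itshape Q2} into a $\Pi^{0,X}_1$ class, yields for each $D$ a $2$-partition of $dom(\Ccal)$ into parts $Y_{\nu_0},Y_{\nu_1}$ (inside a single $\bigoplus_\nu X_\nu \in \Ccal$ for the part $\nu$) containing no finite $R^D$-transitive subset $E$ with $\varphi(D, F_\nu \cup E)$ true; taking the other parts unchanged produces an element of $\Dcal$. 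Properties (a)--(c) of a condition carry over from $c$ since $\vec{E}$ equals $\vec{F}$ on unchanged parts and is $F_\nu$ on $\nu_0,\nu_1$, and the reservoir only shrinks. Forcing of $\neg \varphi(D,G)$ on every part $\mu$ with $f(\mu)=\nu$ is immediate from how $\Dcal$ was cut out.

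For the positive branch with answer $a=\tuple{\yes,r,E^0,\dots,E^{r-1},g^0,\dots,g^{r-1},a'}$, the condition $d$ is the one informally built in Subsection~\ref{subsect:emo-new-condition}: for each $i<r$ and each $2$-partition $\tuple{E_0,E_1}$ of $E^i$, a part $\mu=\tuple{i,E_0,E_1}$ refining $\nu$ is created with $E_\mu = F_\nu \cup g^i(\tuple{E_0,E_1})$, and $\Dcal$ is refined along the lines of $\Ccal^{[\nu,\vec{E^i}]}$, further split by the induced $R^D$-behavior of elements of the reservoir against the set $E^i$. The class $\Dcal$ is non-empty because for each fixed $D$ some $X\in\Ccal$ contains some $E^i$, and the elements of $X_\nu$ naturally partition into which $2$-partition of $E^i$ they induce via $R^D$; the corresponding cover sits in $\Dcal$ (with $Y_{\tuple{j,\cdot,\cdot}}=\emptyset$ for $j\ne i$). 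That $d\leq_f c$ in the refined sense of Subsection~\ref{subsect:emo-new-condition} is built into the definition of the refinement. The forcing statement $\varphi(D, G)$ on each forked part is exactly what $a'$ witnesses, so setting $b(\tuple{i,E_0,E_1})=\tuple{\yes,a'(i,E_0,E_1)}$ gives an answer for every $G\in\satop(\tuple{d,\mu})$.

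The main obstacle will be checking that the new condition $d$ in the positive branch satisfies properties (b) and (c) of a condition. The $R^D$-transitivity of $E_\mu = F_\nu \cup g^i(\tuple{E_0,E_1})$ together with each $x$ in the corresponding $Y_\mu$, and the fact that $Y_\mu$ lies in a minimal $R^D$-interval of $E_\mu$, should follow by essentially the same argument as Lemma~\ref{lem:em-comp-reduc-sufficient-cond-ext}: $g^i(\tuple{E_0,E_1})$ is $R^D$-transitive by construction, its elements come from a single side of the $2$-partition, and the additional splitting of $Y_\mu$ according to $R^D$-behavior against $E^i$ plays the role of property (iv) of that lemma, which is exactly what is needed to propagate the minimal-interval condition from $(F_\nu,X_\nu)$ to $(E_\mu,Y_\mu)$.
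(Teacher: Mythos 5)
Your proposal is correct and takes essentially the same approach as the paper: the substance of the proof is to check, in each of the two branches of $\unboxop$, that the returned tuple $\tuple{d,f,b}$ forks only part $\nu$ and that $b$ reports a correct answer for every $G$ satisfying a forked part, which you do. The paper's own proof is in fact terser than yours — it verifies only the answer-correctness property for $b$ (reducing the negative case to taking $E = G\setminus F_\nu \subseteq Y_{\nu_i}$, and the positive case to $E_\mu \subseteq G$ plus upward-closure of $\Sigma^0_1$ formulas) and treats the non-emptiness of $\Dcal$, the validity of $d$ as a condition via properties (a)--(c), and the extension relation $d \leq_f c$ as already established by the preceding construction in Subsection~\ref{subsect:emo-new-condition}; your additional checks of those facts are welcome but are consistent with, rather than divergent from, the paper's line of reasoning.
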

\begin{proof}
We need to ensure that given some part~$\nu$ of some condition~$c = (\vec{F}, \Ccal, R)$
and some answer~$a$ to a $\Sigma^0_1$ formula $\psi(D) = \boxop(\tuple{c,\nu}, \varphi)$ where $\varphi(D, G)$ is a $\Sigma^0_1$ formula, 
$\unboxop(\tuple{c,\nu}, a)$ outputs a tuple $\tuple{d, f, b}$ where $d = (\vec{E}, \Dcal, R)$ is a condition such that 
$d \leq_f c$ where $f$ forks only part $\nu$ of~$c$,
and for every part~$\mu$ of~$d$ such that~$f(\tuple{d,\mu}) = \tuple{c,\nu}$, 
and every set~$G \in \satop(\tuple{d,\mu})$, $b(\tuple{d,\mu})$ is an answer to~$\varphi(D, G)$.

Suppose that $a = \tuple{\no}$.
By definition of $\satop(\tuple{d,\mu})$ and by construction of $d$, $G$ is $R^D$-transitive and satisfies the Mathias condition
$(E_{\nu_i}, Y_{\nu_i})$ for some $i < 2$ and some cover $\bigoplus_\mu Y_\mu \in \Dcal$.
In particular, $E_{\nu_i} = F_\nu$ and $Y_{\nu_i}$ is such that for every finite $R^D$-transitive set $E \subseteq Y_{\nu_i}$,
$\varphi(D, F_\nu \cup E)$ does not hold. In particular, taking $E = G \setminus F_\nu$, $\varphi(D, G)$ does not hold.

Suppose now that $a = \tuple{\yes, r, E^0, \dots, E^{r-1}, f^0, \dots, f^{r-1}, a'}$
where $a'$ is a function which on every $i < r$ and every 2-partition $\tuple{E_0, E_1} = i$,
returns an answer to $\varphi(D, F_\nu \cup g^i(\tuple{E_0, E_1}))$.
By definition of $\satop(\tuple{d,\mu})$ and by construction of $d$, $G$ is $R^D$-transitive and satisfies the Mathias condition
$(E_\mu, Y_\mu)$ for some cover $\bigoplus_\mu Y_\mu \in \Dcal$, where $\mu = \tuple{i, E_0, E_1}$.
By construction of $d$, $E_\mu = F_\nu \cup g^i(\tuple{E_0,E_1})$, and by definition of $g^i$,
$\varphi(D, F_\nu \cup g^i(\tuple{E_0,E_1}))$.  Since $G$ satisfies $(E_\mu, Y_\mu)$,
$F_\nu \cup g^i(\tuple{E_0,E_1}) \subseteq G$ and $G \setminus E_\mu \subseteq Y_\mu$. 
Therefore $\varphi(D, G)$ holds.
\end{proof}

\subsection{The separation}

We have defined a module $\Mb^{\emo}$ for the Erd\H{o}s-Moser theorem.
In this subsection, we explain how we create an $\omega$-model of $\emo$ which is not a model of $\amt$
from the infinite sequence of stage trees constructed in subsection~\ref{subsect:framework-construction}.
Given the uniform enumeration $R_0, R_1, \dots$ of all primitive recursive tournament functionals,
we shall define an infinite sequence of sets $X_0, X_1, \dots$ together with a $\Delta^0_2$ function $f$ such that
for every $s$,
\begin{itemize}
	\item[1.] $X_{s+1}$ is an infinite, transitive subtournament of $R^{X_0 \oplus \dots \oplus X_s}$
	\item[2.] $f$ dominates every $X_0 \oplus \dots \oplus X_s$-computable function.
\end{itemize}
By 2, any $\Delta^0_2$ approximation $\tilde{f}$ of the function $f$ 
is a computable instance of the escape property with no solution in $\Mcal_{\vec{X}}$,
that is, such that no function in $\Mcal_{\vec{X}}$ escapes~$f$.
By the computable equivalence between the escape property and the atomic model theorem (see subsection~\ref{subsect:dominating-amt}), 
$\Mcal_{\vec{X}} \not \models \amt$.
By Lemma~\ref{lem:em-uniform-model}, $\Mcal_{\vec{X}} \models \emo \wedge \coh$.

Start with $X_0 = \emptyset$ and the $\Delta^0_2$ enumeration $T_0 \geq T_1 \geq \dots$
of stage trees constructed in subsection~\ref{subsect:framework-construction},
and let $c_0 \geq c_1 \geq \dots$ be the sequence of their roots.
The set $U$ of their parts form an infinite, finitely branching tree,
whose structure is given by the refinement functions.
Moreover, by the construction of the sequence $T_0, T_1, \dots$, for every $s$, 
there is some part~$\nu$ in $c_{s+1}$ refining some part~$\mu$
in $c_s$ and which forces $\progop(\mu)$. Call such a part~$\nu$ a \emph{progressing part}. We may also
consider that every part of~$c_0$ is a progressing part, for the sake of uniformity.
By the implementation of $\progop$, if $\nu$ is a progressing part which refines some part~$\mu$,
$\mu$ is also a progressing part. Therefore, the set the progressing parts forms an infinite subtree $U_1$ of $U$.

Let $\nu_0, \nu_1, \dots$ be an infinite path through $U_1$.
Notice that $\satop(\nu_s) \neq \emptyset$.
Indeed, if $\satop(\nu_s) = \emptyset$, then the part~$\nu_s$ is empty in $\Ccal_s$, where $c_s = (\vec{E}_s, \Ccal_s)$,
and therefore we cannot find some progressing part~$\nu_{s+1}$ refining $\nu_s$.
Therefore, the set $\bigcap_s \satop(\nu_s)$ is non-empty. Let $X_1 \in \bigcap_s \satop(\nu_s)$.
By definition of $\satop(\nu_s)$, $X_1$ is a transitive subtournament of $R^{X_0}$.
By definition of $\progop$, for every $s$ and every set $G \in \satop(\nu_s)$, there is some~$n \in G$ such that $n > s$.
Therefore, the set $X_1$ is infinite, so the property 1 is satisfied.

Repeat the procedure with the sequence of stage trees $T_1^{[\nu_1]} \geq T_2^{[\nu_2]} \geq \dots$ and so on.
We obtain an infinite sequence of sets $X_0, X_1, \dots$ satisfying the property 1.
Let $f$ be the $\Delta^0_2$ function which on input $x$, returns $max(U_x)+1$ where
$U_x$ is the finite set stated in the domination lemma (Lemma~\ref{lem:domination-lemma}) for stage trees of depth $x$.
Fix some Turing index $e$ such that $\Phi^{X_0 \oplus \dots \oplus X_i}_e$ is total.
By the domination lemma, for every $x \geq max(e,i)$, $\Phi^{X_0 \oplus \dots \oplus X_i}_e(x) \in U_x < f(x)$.
Therefore the function~$f$ dominates every $X_0 \oplus \dots \oplus X_i$-computable function.
This finishes the proof of Theorem~\ref{thm:emo-not-amt}.

\section{Separating combined principles from~$\amt$}\label{sect:separating-amt-combined}

The domination framework has two purposes. First, it emphasizes on the key elements
of the construction and gets rid of the implementation technicalities by abstracting 
the main operations into operators. Second, it enables us to separate conjunctions
of principles from $\amt$, using the ability to compose modules into a compound one.
In this section, we will take advantage of the latter to prove that $\emo$
is not strong enough to prove~$\amt$, even when allowing compactness arguments.

\begin{theorem}\label{thm:emo-coh-amt}
There is an $\omega$-model of $\emo \wedge \coh \wedge \wkl$ which
is not a model of $\amt$.
\end{theorem}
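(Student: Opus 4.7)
The plan is to leverage the modular design of the domination framework established in the previous sections: the construction in subsection~\ref{subsect:framework-construction} is generic in the underlying module, so we need only build a module $\Mb^{\wkl}$ for weak K\"onig's lemma and explain how to compose it with $\Mb^{\emo}$. Once we have this combined module, the same iterative forcing produces a sequence of sets $X_0, X_1, \ldots$ together with a $\Delta^0_2$ function $f$ dominating every $X_0 \oplus \dots \oplus X_s$-computable function, and the ideal $\Mcal_{\vec{X}}$ is the desired $\omega$-model. As in the proof of Theorem~\ref{thm:emo-not-amt}, the fact that $f$ dominates every function in $\Mcal_{\vec{X}}$ prevents $\amt$ from holding (via the escape property), while $\coh$ comes for free from $\rca \vdash \emo \imp [\sts(2) \vee \coh]$ together with $\sts(2) \imp \amt$.

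For the $\wkl$ module, the conditions will be pairs $(\sigma, \Tcal)$ where $\sigma \in 2^{<\omega}$ and $\Tcal$ is a $\Pi^{0,D}_1$ binary subtree syntactically described by a formula with formal parameter $D$, such that $\Tcal$ has infinitely many nodes above $\sigma$ for every $D$ satisfying the previous iterations. Each condition has a single part, and $\satop(\sigma, \Tcal)$ is the collection of paths through $\Tcal$ extending $\sigma$. The $\iniop$ operator returns, at iteration $n$, an initial condition associated with the $n$th primitive recursive binary tree (which suffices by an analogue of Lemma~\ref{lem:em-uniform-model} for $\wkl$, using that every infinite computable binary tree has a path computing a path through some infinite primitive recursive subtree via Kleene's fixed-point argument or direct padding). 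Given a query $\varphi(D,G)$, $\boxop$ produces the $\Sigma^0_1$ formula ``$(\exists \tau \succeq \sigma)(\tau \in \Tcal \wedge \varphi(D,\tau))$''; on a $\yes$ answer with witness $\tau$, $\unboxop$ refines to $(\tau, \Tcal)$ without forking, and on a $\no$ answer it replaces $\Tcal$ by the subtree $\{\rho \in \Tcal : (\forall \tau \preceq \rho)[\tau \succeq \sigma \imp \neg \varphi(D,\tau)]\}$, which remains infinite above $\sigma$ by the compactness of $2^{<\omega}$ and the failure of the query. Finally, $\progop(\sigma, \Tcal)$ asks for an extension $\tau \succ \sigma$ in $\Tcal$.

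To combine the two modules, define $\Mb^{\emo} \otimes \Mb^{\wkl}$ to have a single formal iteration axis, but make $\iniop(n)$ alternate: at even $n = 2k$ it returns an $\emo$-condition for the $k$th primitive recursive tournament functional, and at odd $n = 2k+1$ it returns a $\wkl$-condition for the $k$th primitive recursive binary tree. Parts, extension, $\boxop$, $\unboxop$, and $\progop$ at each iteration level are inherited from whichever underlying module is active there; since queries refer to a shared $D$ formal parameter that records the join of all previous iterations, information flows correctly between $\emo$ and $\wkl$ layers. Verifying that this is indeed a module reduces to checking each level separately, which is immediate from the two individual module verifications.

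The main obstacle will be confirming that the well-foundedness argument of Lemma~\ref{lem:sq-well-founded} still applies in the combined setting. Inspecting the proof, the key inductive property is that each application of $\unboxop$ either reduces the number of parts of an unforked condition or forks a part into strictly progressing subparts. For the $\wkl$ module, the $\unboxop$ operator never forks (its tree is shrunk or its stem extended), so forks come only from the $\emo$ layers exactly as before; well-foundedness is therefore preserved since the combined $\sqsubset$-order projects coordinatewise onto a well-founded order on the $\emo$ part while the $\wkl$ part only shrinks monotonically. Once this is in hand, the construction of subsection~\ref{subsect:framework-construction} goes through unchanged, and extracting a path through the tree of progressing parts yields $\vec{X}$ with $X_{2k+1}$ a path through the $k$th primitive recursive tree and $X_{2k+2}$ an infinite transitive subtournament of the $k$th primitive recursive tournament relative to the earlier coordinates, giving $\Mcal_{\vec{X}} \models \emo \wedge \wkl \wedge \coh$ while $\Mcal_{\vec{X}} \not\models \amt$.
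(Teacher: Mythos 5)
Your overall plan — compose an $\emo$ module with a $\wkl$ module inside the domination framework, then extract $\vec{X}$ from the resulting stage trees — is the same route the paper takes, and the composition by interleaving $\iniop$ across iteration levels, along with your observation that the $\wkl$ module never forks (so well-foundedness of $\sqsubset$ is preserved), matches the paper in substance. Your $\no$ branch is also essentially correct, since a $\no$ answer is certified for every admissible $D$, so the pruned tree equals $\Tcal^D$ above $\sigma$ and forces $\neg\varphi(D,G)$ for every path.

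The gap is in the $\yes$ case of your $\wkl$ module. Your $\boxop$ produces $(\exists \tau \succeq \sigma)(\tau \in \Tcal^D \wedge \varphi(D,\tau))$, and on a $\yes$ answer you advance the stem to the witness $\tau_0$. But the unboxing chain hands you a \emph{fixed} string $\tau_0$ certified only to lie in $\Tcal^D$ and satisfy $\varphi(D,\tau_0)$ for every $D$ satisfying the lower-level conditions; nothing certifies that $\Tcal^D$ is infinite above $\tau_0$. For some admissible $D$ the tree dies at $\tau_0$, so $(\tau_0, \Tcal)$ violates your own validity requirement and $\satop(\tau_0,\Tcal)$ becomes empty — and since a $\wkl$ condition has a single part, there is no tree of acceptable parts to fall back on as there is for $\emo$. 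This is precisely the pitfall the paper flags (``we are unable to decide which of the $\sigma$'s will be extensible into an infinite subtree''). The fix is to drop the stem altogether and use a stronger query: take conditions to be trees $T$ alone, and let $\boxop(T,\varphi) = (\exists n)[T^D \cap \Gamma^D_\varphi \cap 2^n = \emptyset]$ where $\Gamma^D_\varphi = \{\sigma : \neg\varphi(D,\sigma)\}$. A $\yes$ answer with witness $n$ forces $\varphi(D,G)$ for every path $G$ through $T^D$ without ever committing to a particular string — implicitly one keeps the union $\bigcup_{\rho \in T^D \cap 2^n} T^{D,[\rho]}$ as the extension — and a $\no$ answer shows $T^D \cap \Gamma^D_\varphi$ is infinite, so $T \cap \Gamma_\varphi$ is a valid extension forcing $\neg\varphi$. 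With this change the module goes through and the rest of your argument is sound.
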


In subsection~\ref{subsect:composing-modules}, we will show how to compose multiple modules to obtain
separations of conjunctions of principles from $\amt$. Then, in subsection~\ref{subsect:module-for-wkl}, we will
provide a module for $\wkl$ and will show how to choose properly the sequence
of sets $X_0, X_1, \dots$ to obtain an $\omega$-model of~$\wkl$. 

\subsection{Composing modules}\label{subsect:composing-modules}

When building the second-order part~$\Ical$ of an $\omega$-model of a countable collection of principles $\Psf_0, \Psf_1, \dots$,
we usually interleave the instances of the various $\Psf$'s so that each instance receives attention after a finite number of iterations.
This is exactly what we will do when composing module supports $\Sb_i = \tuple{\Pb_i, \Ub_i, \parop_i, \iniop_i, \satop_i}$
for $\Psf_i$ for each $i \in \omega$, in order to obtain a compound 
module support $\Sb = \tuple{\Pb, \Ub, \parop, \iniop, \satop}$ for $\bigwedge_{i \in \Nb} \Psf_i$.
The domain of the partial order~$\Pb$ is obtained by taking the disjoint union of the partial orders $\Pb_i$.
Therefore $\Pb = \{ \tuple{c,i} : i \in \Nb \wedge c \in \Pb_i \}$.
The order is defined accordingly: $\tuple{d, j} \leq_\Pb \tuple{c,i}$ if $i = j$
and $d \leq_{\Pb_i} c$. Similarly, $\Ub = \{ \tuple{\nu,i} : i < \Nb \wedge \nu \in \Ub_i \}$,
$\parop(\tuple{c,i}) = \{\tuple{\nu, i} : \nu \in \parop_i(c)\}$ and $\satop(\tuple{\nu,i}) = \satop_i(\nu)$.

The key element of the composition is the definition of $\iniop(n)$, which will
return $\iniop_i(m)$ if $n$ codes the pair $(m, i)$.
This way, infinitely many iterations are responsible for making $\Ical$ satisfy $\Psf_i$ for each $i \in \Nb$.
The construction within the domination framework therefore follows the usual construction of a model satisfying two principles.

The property~(i) in the definition of a module support for~$\Sb$ inherits from
the property~(i) of $\Sb_i$ for each $i \in \Nb$. Indeed, if $\tuple{d,j} \leq_\Pb \tuple{c,i}$,
then $j = i$ and $d \leq_{\Pb_i} c$. By the property~(i) of $\Mb_i$, there is a function
$f : \parop_i(d) \to \parop_i(c)$ such that $\satop_i(\nu) \subseteq \satop_i(f(\nu))$
for each $\nu \in \parop_i(d)$. Let $g : \parop(\tuple{d,i}) \to \parop(\tuple{c,i})$
be defined by $g(\tuple{\nu,i}) = \tuple{f(\nu), i}$.
$\satop(\tuple{\nu,i}) = \satop_i(\nu) \subseteq \satop_i(f(\nu)) = \satop(g(\tuple{\nu,i})$.

Given a module $\Mb_i = \tuple{\Sb_i, \boxop_i, \unboxop_i, \progop_i}$ for $\Psf_i$ for each~$i \in \Nb$,
the definition of the compound module $\Mb = \tuple{\Sb, \boxop, \unboxop, \progop}$ for $\bigwedge_{i \in \Nb} \Psf_i$
does not contain any particular subtlety.
Simply redirect $\boxop(\tuple{\nu,i}, \varphi)$ to $\boxop_i(\nu, \varphi)$,
$\unboxop(\tuple{\nu,i}, a)$ to $\unboxop_i(\nu, a)$, and $\progop(\tuple{\nu,i})$ to $\progop_i(\nu)$.
Again, the properties of a module support for $\Mb$ inherit the properties for $\Mb_i$.

\subsection{A module for $\wkl$}\label{subsect:module-for-wkl}

Weak K\"onig's lemma states for every infinite binary tree
the existence of an infinite path through it.
The usual effective construction of such a path follows
the classical proof of K\"onig's lemma:
we build the path by finite approximations and consider 
the infinite subtree below the finite path we constructed so far.
The difficulty consists of finding which ones, among the finite extensions candidates,
induce an infinite subtree.

First note that we do not share the same concerns as for the Erd\H{o}s-Moser theorem
about the choice of an instance, since $\wkl$ admits a universal instance which is the tree
whose paths are completions of Peano arithmetics. Moreover,
this universal instance is a primitive recursive tree functional.

It is natural to choose the infinite, computable binary tree functionals as our forcing conditions.
A condition (tree) $U$ \emph{extends} $T$ if $U^D \subseteq T^D$. 
A set $G$ \emph{satisfies} the condition $T$ if $G$ is an infinite path through $T^D$.
Let us now see how we decide some $\Sigma^0_1$ query $\varphi(D, G)$. Consider the following question:

\smallskip
{\itshape
Q6: Is the set $T^D \cap \{\sigma \in 2^{<\omega} : \neg \varphi(D, \sigma) \}$ finite?
}
\smallskip

Let $\Gamma^D_\varphi = \{\sigma \in 2^{<\omega} : \neg \varphi(D, \sigma) \}$.
Whenever $\varphi(D, \tau)$ holds and $\rho \succeq \tau$, $\varphi(D, \rho)$ holds,
thus $\Gamma^D_\varphi$ is a tree.
At first sight, the question Q6 seems $\Sigma^{0,D}_2$. However, $T^D \cap \Gamma^D_\varphi$ is a tree,
so the question can be formulated in a $\Sigma^{0,D}_1$ way as follows:

\smallskip
{\itshape
Q6': Is there some length $n$ such that $T^D \cap \Gamma^D_\varphi$ has no string of length $n$?
}
\smallskip

If the answer is negative, the extension $T \cap \Gamma_\varphi$
is valid and forces $\varphi(D, G)$ not to hold.
If the answer is positive, the condition $T$ already forces $\varphi(D, G)$ to hold.
Note that there is a hidden application of our moto ``if you cannot choose, try every possibilities in parallel''.
Indeed, in many forcing arguments involving weak K\"onig's lemma,
we $\emptyset'$-choose an extensible string $\sigma \in T$ such that $\varphi(D, \sigma)$ holds
and $T^{D, [\sigma]}$ is infinite. However, we meet the same problem as in the Erd\H{o}s-Moser case,
that is, we are unable to decide which of the $\sigma$'s will be extensible into an infinite subtree.
To be more precise, for every $\sigma \in 2^n$, there may be some $D$ such that the set $T^{D, [\sigma]}$ is finite.
By taking $T$ as our extension forcing $\varphi(D, G)$ to hold, we take in reality $\bigcup_{\sigma \in 2^n \cap T} T^{[\sigma]}$,
that is, we take the union of the candidate extensions $T^{[\sigma]}$.
We are now ready to define a module support $\Sb^{\wkl} = \tuple{\Pb, \Ub, \parop, \iniop, \satop}$ for $\wkl$.

The set $\Pb$ is the set of conditions as defined above.
Each condition has only one part which can be identified as the condition itself,
therefore $\Ub = \Pb$. Accordingly, $\parop(T) = \{T\}$.
The function $\iniop(n)$ always returns the universal instance of $\wkl$.
Last, $\satop(T)$ is the collection of the infinite paths through $T^D$.

\begin{lemma}
$\Sb^{\wkl}$ is a module support.
\end{lemma}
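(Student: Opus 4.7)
The plan is to verify the five clauses in the definition of a module support for $\Sb^{\wkl}$, noting that all the typing and computability requirements can be read off the definitions: the relation $U \leq_\Pb T$ iff $U^D \subseteq T^D$ is reflexive and transitive since $\subseteq$ is; the function $\parop(T) = \{T\}$ is computable in the index of $T$; and $\iniop$ is the constant function returning the universal $\wkl$-instance, which is a fixed primitive recursive tree. The only clause requiring any work is property~(a), which asserts the existence of a refinement function $f : \parop(U) \to \parop(T)$ with $\satop(\nu) \subseteq \satop(f(\nu))$ whenever $U \leq_\Pb T$.

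To establish property~(a), fix $U \leq_\Pb T$. Since $\parop(U) = \{U\}$ and $\parop(T) = \{T\}$ are singletons, there is a unique candidate function $f$, namely the one sending $U$ to $T$, and it suffices to show $\satop(U) \subseteq \satop(T)$. Taking any $G \in \satop(U)$, by definition $G$ is an infinite path through $U^D$. The extension relation $U \leq_\Pb T$ unfolds to $U^D \subseteq T^D$, so every initial segment of $G$ lying in $U^D$ a fortiori lies in $T^D$, whence $G$ is an infinite path through $T^D$ as well, i.e., $G \in \satop(T)$.

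The only point requiring a bit of care is the syntactic convention used throughout the framework: both the extension relation and the satisfaction operator are parameterized by the formal variable $D$ standing for the join of the sets constructed at the previous iterations, so the inclusion $U^D \subseteq T^D$ and the corresponding path argument must be read as holding uniformly in every such $D$. Granted this convention, which matches the parallel treatments already carried out for $\coh$ and $\emo$, the verification collapses to the one-line inclusion above, and I do not anticipate any further obstacle.
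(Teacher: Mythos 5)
Your proof is correct and takes essentially the same route as the paper's: you identify property (a) as the only non-trivial clause, use the unique refinement function sending the singleton part $U$ to $T$, and verify $\satop(U) \subseteq \satop(T)$ via $U^D \subseteq T^D$. The extra remarks on the other clauses and the syntactic role of $D$ are accurate but not needed beyond what the paper states.
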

\begin{proof}
We need to check the property (i) of a module support.
Let~$U \leq_\Pb T$ for some conditions~$T$ and~$U \in \Pb$.
Define $f : \parop(U) \to \parop(T)$ as the function $f(U) = T$.
We claim that $\satop(U) \subseteq \satop(f(U))$ for each~$U \in \parop(U)$.
Since $\parop(U) = \{U\}$, we need to check that $\satop(U) \subseteq \satop(T)$,
which is immediate since $U \subseteq T$.
\end{proof}

We now define the module $\Mb^{\wkl} = \tuple{\Sb^{\wkl}, \boxop, \unboxop, \progop}$ as follows.
Given some tree $T$ and some query $\varphi(D, G)$, $\boxop(T, G)$ is the formula
$\psi(D) = (\exists n)[T^D \cap \Gamma^D_\varphi \cap 2^n = \emptyset]$.
Recall that $\Gamma^D_\varphi = \{\sigma \in 2^{<\omega} : \neg \varphi(D, \sigma) \}$.
If the answer to the question $\psi(D)$ is $\tuple{\no}$,
$\unboxop(T, \tuple{\no})$ returns the tuple $\tuple{T \cap \Gamma_\varphi, f, b}$
where $f : \parop(T \cap \Gamma_\varphi) \to \parop(T)$ is trivially defined by $f(T \cap \Gamma_\varphi) = T$
and $b$ is the constant function returning $\tuple{\no}$ everywhere.
In the answer to the question $\psi(D)$ is $a = \tuple{\yes, n, a'}$, where $n$ is the integer witnessing
$T^D \cap \Gamma^D_\varphi \cap 2^n = \emptyset$ and $a'$ witnesses the other existentials variables in $\varphi(D, G)$,
$\unboxop(T, a)$ returns the tuple $\tuple{T, id, b}$
where $id$ is the identify refinement function and $b$ is the constant function returning~$\tuple{\yes, a'}$ everywhere.
No progress is needed for~$\wkl$. Therefore, $\progop(T)$ can be chosen to be any formula.


\begin{lemma}
$\Mb^{\wkl}$ is a module.
\end{lemma}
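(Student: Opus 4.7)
The plan is to check directly each clause in the definition of a module, since $\Sb^{\wkl}$ has already been shown to be a module support and the operators $\boxop$, $\unboxop$, $\progop$ have been defined explicitly. Computability of $\boxop$ and $\progop$ is immediate from their definitions, and each condition has exactly one part so the forking requirement on $\unboxop$ is vacuous; hence the only real content is to verify that the outputs of $\unboxop$ are valid conditions and that the returned answer function $b$ correctly answers $\varphi(D,G)$ on every part of the extension, uniformly in all $D$ satisfying the previous iterations.

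First I would treat the ``no'' branch. Given a condition $T$ and a $\Sigma^0_1$ query $\varphi(D,G)$, the boxed query $\psi(D)$ asserts that some level of $T^D \cap \Gamma^D_\varphi$ is empty. A negative answer therefore says that $T^D \cap \Gamma^D_\varphi$ contains strings of every length. Since, as noted in the preamble, $\Gamma^D_\varphi$ is closed under taking initial segments (because $\varphi$ is $\Sigma^0_1$, so $\varphi(D,\sigma)$ propagates to all extensions of $\sigma$), the intersection $T^D \cap \Gamma^D_\varphi$ is a binary tree; by König's lemma it is infinite and has infinite paths. This gives that the extension $U = T \cap \Gamma_\varphi$ is again a valid primitive recursive tree functional with $U^D \subseteq T^D$, so $U \leq_\Pb T$. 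For every $G \in \satop(U)$, every prefix $\sigma \prec G$ lies in $\Gamma^D_\varphi$, i.e.\ satisfies $\neg \varphi(D,\sigma)$; since $\varphi(D,G)$ is $\Sigma^0_1$, if it held it would be witnessed by some finite prefix of $G$, contradicting our prefix-wise refutation. So $\neg \varphi(D,G)$ holds and $b(U) = \tuple{\no}$ is a correct answer.

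Next I would treat the ``yes'' branch. A positive answer $a = \tuple{\yes,n,a'}$ to $\psi(D)$ provides an $n$ such that $T^D \cap \Gamma^D_\varphi \cap 2^n = \emptyset$, i.e.\ every length-$n$ string of $T^D$ satisfies $\varphi(D,\sigma)$. Here $\unboxop$ returns $T$ itself, so the extension is trivially valid. For any $G \in \satop(T)$, the prefix $G \uh n$ lies in $T^D$ and therefore outside $\Gamma^D_\varphi$, so $\varphi(D, G \uh n)$ holds, hence by upward closure of $\Sigma^0_1$ formulas $\varphi(D,G)$ holds, and the answer $\tuple{\yes,a'}$ (transcribing the internal witnesses produced by the compactness on $n$) is correct.

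The only step that needs some care is making sure that the tree $T \cap \Gamma_\varphi$ is again a primitive recursive tree functional of the kind allowed as a forcing condition; this is routine since $\Gamma_\varphi$ is uniformly primitive recursive in the parameter used for $\varphi$, and the identity refinement function $f$ used in both branches trivially satisfies the support requirement $\satop(U) \subseteq \satop(f(U))$ established in the preceding lemma. No forking occurs and no progress query has to be forced, so the remaining clauses of the module definition are satisfied vacuously, completing the verification.
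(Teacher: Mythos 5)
Your proof takes essentially the same approach as the paper's: it verifies both branches of $\unboxop$ directly, arguing in the $\tuple{\no}$ case that every prefix of $G$ lies in $\Gamma^D_\varphi$ and hence $\neg\varphi(D,G)$ by $\Sigma^0_1$-ness, and in the $\tuple{\yes, n, a'}$ case that $G\uh n \in T^D \setminus \Gamma^D_\varphi$, so $\varphi(D, G\uh n)$ and hence $\varphi(D,G)$ holds. The extra observations you add — the K\"onig's lemma argument for non-emptiness of $T^D \cap \Gamma^D_\varphi$ and the check that $T \cap \Gamma_\varphi$ is again a primitive recursive tree functional — are sound but not strictly needed for the lemma, since the module definition quantifies over $G \in \satop(U)$ and is vacuously satisfied when that set is empty.
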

\begin{proof}
We need to ensure that, given the unique part $T$ of the condition $T$
and some answer $a$ to a $\Sigma^0_1$ formula $\psi(D) = \boxop(T, \varphi)$
where $\varphi(D, G)$ is a $\Sigma^0_1$ formula, $\unboxop(T, a)$
outputs a tuple $\tuple{U, f, b}$ where $U$ is an extension of~$T$,
$f : \parop(U) \to \parop(T)$ is defined by $f(U) = T$
and for every set $G \in \satop(U)$, $b(U)$ is an answer to $\varphi(D, G)$.

Suppose that $a = \tuple{\no}$. By definition of $\unboxop(T, \tuple{\no})$,
$U = T \cap \Gamma_\varphi$ and $b$ is the constant function $\tuple{\no}$. 
By definition of $\satop(U)$, $G$ is an infinite path through $T^D \cap \Gamma^D_\varphi$.
Let $\sigma$ be any initial segment of $G$. In particular, $\sigma \in \Gamma^D_\varphi$. Unfolding the definition of $\Gamma^D_\varphi$n
$\varphi(D, \sigma)$ does not hold. Therefore $\varphi(D, G)$ does not hold.

Suppose now that $a = \tuple{\yes, n, a'}$, where $a'$ witnesses the existential variables of $\varphi(D, G)$.
Again, by definition of $\unboxop(T, a)$, $U = T$, $f$ is the identify function 
and $b$ is the constant function returning $\tuple{\yes, a'}$ everywhere.
By definition of $\satop(U)$, $G$ is an infinite path through $T^D$.
Let $\sigma$ be an initial segment of $G$ of length $n$. Since $T^D \cap \Gamma^D_\varphi \cap 2^n = \emptyset$,
$\varphi(D, \sigma)$ holds, so $\varphi(D, G)$ holds.
\end{proof}

Finally, we explain how to extract a solution to the universal instance of $\wkl$ below some set $D$,
given the infinite decreasing sequence of stage trees constructed in subsection~\ref{subsect:framework-construction}.
Given the sequence $T_0 \geq T_1 \geq \dots$ whose roots are $T_0 \geq T_1 \geq \dots$,
there is no much choice since each condition $T_s$ has only one part, that is, the tree $T_s$ itself.
By compactness, $\bigcap_s T_s^D$ is infinite. Take any infinite path~$G$ through~$\bigcap_s T_s^D$.
This completes the proof of Theorem~\ref{thm:emo-coh-amt}.


\subsection{Beyond the atomic model theorem}

We conclude this section by a discussion on the generality of the domination framework and
its key properties.

Ramsey-type theorems satisfy one common core combinatorial property: given an instance $I$ of a principle $\Psf$,
for every infinite set $X \subseteq \Nb$, there is a solution of~$Y \subseteq X$ of $I$.
This property makes Ramsey-type principles combinatorially weak. Indeed, Solovay~\cite{Solovay1978Hyperarithmetically}
proved that the sets computable by every solution to a given instance~$I$ of $\Psf$ are precisely
the hyperarithmetical ones.  Moreover, Groszek and Slaman~\cite{Groszek2007Moduli} proved that the hyperarithmetical 
sets are precisely the sets~$S$ admitting a modulus, namely,
a function $f$ such that every function dominating $f$ computes~$S$. These results put together can be interpreted
as stating that the coding power of Ramsey-type principles comes from the sparsity of their solutions.
If an instance can force its solutions~$H = \{x_0 < x_1 < \dots \}$ to have arbitrarily large gaps, then the principal function
$p_H$ defined by $p_H(n) = x_n$ will be fast-growing, and contain some computational power.

The strength of many principles in reverse mathematics can be explained 
in terms of the ability to ensure gaps in the solutions.
$\aca$ has instances whose solutions are everywhere-sparse, in that the principal function
of the solutions dominated the modulus function of $\emptyset'$.
Some principles such as $\coh$, $\amt$ or $\fip$ imply the existence of hyperimmune sets,
which are sets sparse enough so that their principal function is not dominated by any computable function.
These sets have infinitely many gaps, but their repartition cannot be controlled.

Another important aspect of the hole-based analysis is their definitional complexity.
For example, $\amt$ has the ability to ensures $\Delta^0_2$ gaps, which gives it more computational power
than $\coh$ or $\emo$ which can only have $\Delta^0_1$ gaps. This is the main feature used by the domination framework
to prove that $\coh \wedge \emo$ does not imply $\amt$.
This framework was designed to exploit this weakness of the principles, and is therefore relatively specific to
the atomic model theorem. However, some weakenings of $\amt$, such as the finite intersection property,
share some similar properties, in that they can also be purely characterized in terms of hyperimmunity properties.
The author leaves open the following question:

\begin{question}
Does $\coh$ imply $\fip$ in $\rca$?
\end{question}


\vspace{0.5cm}

\noindent \textbf{Acknowledgements}. The author is thankful to his PhD advisor Laurent Bienvenu
for useful comments and discussions. The author is funded by the John Templeton Foundation (`Structure and Randomness in the Theory of Computation' project). The opinions expressed in this publication are those of the author(s) and do not necessarily reflect the views of the John Templeton Foundation.

\vspace{0.5cm}

\bibliographystyle{plain}
\bibliography{../bibliography}

\clearpage
\appendix

\section{Evolution of the local zoo}

In this last section, we give a short history of the zoo related to the Erd\H{o}s-Moser theorem.
In Figure~\ref{fig:local-zoo}, we present the various implications proven
between the principles $\emo$, $\sts^2$, $\sads$ and $\amt$. An arrow
denotes an implication over $\rca$. A dotted arrow from a principle $\Psf$
to a principle $\Qsf$ denotes the existence of an $\omega$-model of $\Psf$ which is not a model of~$\Qsf$.

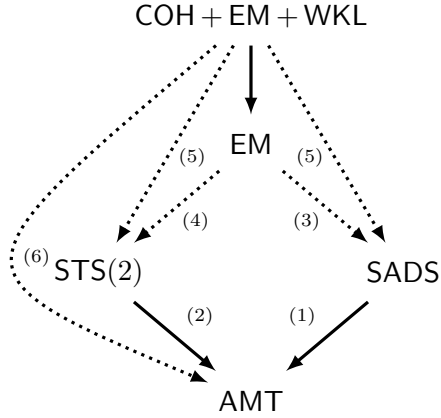
\begin{figure}[htbp]
\begin{center}
\begin{tikzpicture}[x=2cm, y=1.7cm, 
	node/.style={minimum size=2em},
	impl/.style={draw,very thick,-latex},
	strict/.style={impl}, 
	nonimpl/.style={draw, very thick, dotted, -latex},
	edgelabel/.style={inner sep=0pt}]

	\node[node] (coh+em+wkl) at (2, 3) {$\coh+\emo+\wkl$};
	\node[node] (em) at (2, 2)  {$\emo$};
	\node[node] (sts2) at (1, 1) {$\sts(2)$};
	\node[node] (sads) at (3, 1) {$\sads$};
	\node[node] (amt) at (2, 0) {$\amt$};

	\draw[strict] (coh+em+wkl) -- (em);
	\draw[strict] (sts2) -- (amt) node [edgelabel, midway, above right=5pt] {\tiny (2)};
	\draw[strict] (sads) -- (amt) node [edgelabel, midway, above left=5pt] {\tiny (1)};
	
	\draw[nonimpl] (em) -- (sads) node [edgelabel, midway, below left=2pt] {\tiny (3)};
	\draw[nonimpl] (em) -- (sts2) node [edgelabel, midway, below right=2pt] {\tiny (4)};
	\draw[nonimpl] (coh+em+wkl) -- (sts2) node [edgelabel, midway, below right=1pt] {\tiny (5)};
	\draw[nonimpl] (coh+em+wkl) -- (sads) node [edgelabel, midway, below left=1pt] {\tiny (5)};
	\draw[nonimpl] (coh+em+wkl) .. controls (0,1) .. (amt) node [edgelabel, midway, right=3pt] {\tiny (6)};;
\end{tikzpicture}
\end{center}
\caption{Evolution of the zoo}\label{fig:local-zoo}
\end{figure}

Justification of the arrows:
\begin{itemize}
	\item[(1)] Hirschfeldt, Shore and Slaman~\cite{Hirschfeldt2009atomic} proved that~$\amt$ is a consequence of~$\sads$ over~$\rca$. 
	\item[(2)] The author proved in~\cite{Patey2015Somewhere} that $\sts(2)$ implies $\amt$ over~$\rca$ using a similar argument.
	\item[(3)] Lerman, Solomon and Towsner~\cite{Lerman2013Separating} separated $\emo$ from~$\sads$ using an iterated forcing construction.
	\item[(4)] The author noticed in~\cite{Patey2013note} that the forcing of Lerman, Solomon and Towsner
	could be adapted to separate $\emo$ from $\sts(2)$ over $\rca$.
	\item[(5)] Wang~\cite{Wang2014Definability} used the notion of preservation of $\Delta^0_2$ definitions
	to separate $\coh+\emo+\wkl$ from $\sads$ and $\sts(2)$ over $\rca$.
	\item[(6)] This is the main result of the current paper.
\end{itemize}

\end{document}